\newtheorem{theorem}{Theorem}[]
\newtheorem{proposition}[theorem]{Proposition}
\newtheorem{lemma}[theorem]{Lemma}
\newtheorem{corollary}[theorem]{Corollary}
\theoremstyle{definition}
\newtheorem{definition}[theorem]{Definition}
\newcommand{\Z}{\mathbf Z}
\newcommand{\Gal}{\mathrm{Gal}}
\newcommand{\mo}{\mathrm{mod}\, }
\newcommand{\Stab}{\mathrm{Stab}}
\newcommand{\Hol}{\mathrm{Hol}}
\newcommand{\Sym}{\operatorname{Sym}}
\newcommand{\GL}{\mathrm{GL}}
\newcommand{\End}{\operatorname{End}}
\newcommand{\Aut}{\operatorname{Aut}}
\newcommand{\Id}{\operatorname{Id}}
\newcommand{\Syl}{\operatorname{Syl}}
\newcommand{\wL} {{\widetilde{L}}}
\newcommand{\Soc}{\operatorname{Soc}}
\newcommand{\Ann}{\operatorname{Ann}}
\begin{document}

\large
\begin{center}
{\bf Hopf Galois structures on extensions of degree twice \\ an odd prime square and their associated skew left braces}

\vspace{0.4cm}
Teresa Crespo

\vspace{0.2cm}

\normalsize{Departament de Matem\`atiques i Inform\`atica, Universitat de Barcelona, \\ Gran Via de les Corts Catalanes 585, 08007 Barcelona, Spain, \\
e-mail: teresa.crespo@ub.edu}
\end{center}

\normalsize

\begin{abstract} We determine the Hopf Galois structures on a Galois field extension of degree twice an odd prime square and classify the corresponding skew left braces. Besides we determine the separable field extensions of degree twice an odd prime square allowing a cyclic Hopf Galois structure and the number of these structures.
\end{abstract}

\section{Introduction}
A Hopf Galois structure on a finite extension of fields $L/K$ is a pair $(H,\mu)$, where $H$ is
a finite cocommutative $K$-Hopf algebra  and $\mu$ is a
Hopf action of $H$ on $L$, i.e. a $K$-linear map $\mu: H \to
\End_K(L)$ giving $L$ a left $H$-module algebra structure and inducing a $K$-vector space isomorphism $L\otimes_K H\to\End_K(L)$.
Hopf Galois structures were introduced by Chase and Sweedler in \cite{C-S}.
For separable field extensions, Greither and
Pareigis \cite{G-P} give the following group-theoretic
equivalent condition to the existence of a Hopf Galois structure.

\begin{theorem}\label{G-P}
Let $L/K$ be a separable field extension of degree $g$, $\wL$ its Galois closure, $G=\Gal(\wL/K), G'=\Gal(\wL/L)$. Then there is a bijective correspondence
between the set of isomorphism classes of Hopf Galois structures on $L/K$ and the set of
regular subgroups $N$ of the symmetric group $S_g$ normalized by $\lambda_G(G)$, where
$\lambda_G:G \hookrightarrow S_g$ is the monomorphism given by the action of
$G$ on the left cosets $G/G'$.
\end{theorem}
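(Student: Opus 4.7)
The plan is to establish the bijection via Galois descent along $\wL/K$. The key observation is that $\wL\otimes_K L$, being a separable commutative $\wL$-algebra on which $\Gal(\wL/K)=G$ acts semi-linearly, splits canonically as the algebra $\wL^X$ of $\wL$-valued functions on the finite $G$-set $X:=G/G'$; its primitive idempotents $\{e_x\}_{x\in X}$ are permuted by $G$ precisely via $\lambda_G$.

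Given a Hopf Galois structure $(H,\mu)$ on $L/K$, I would base-change to $\wL$ to obtain a cocommutative $\wL$-Hopf algebra $\widetilde{H}:=\wL\otimes_K H$ of dimension $g$ acting on $\wL^X$, so that the induced map $\wL^X\otimes_{\wL}\widetilde{H}\to\End_{\wL}(\wL^X)$ is an isomorphism. Over the field $\wL$, cocommutativity combined with this Hopf Galois condition forces the group-like elements of $\widetilde{H}$ to form a group $N$ of order $g$, so that $\widetilde{H}\cong\wL[N]$, with $N$ acting regularly on $\{e_x\}$. This realizes $N$ as a regular subgroup of $\Sym(X)\cong S_g$. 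The $K$-form of $H$ corresponds to a semi-linear $G$-action on $\widetilde{H}$ which must preserve the set of group-likes, hence $G$ permutes $N\subset\Sym(X)$; compatibility with the $G$-action on the idempotents forces this permutation to be conjugation by $\lambda_G(G)$, so $\lambda_G(G)$ normalizes $N$ in $S_g$.

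Conversely, given a regular $N\subset S_g$ normalized by $\lambda_G(G)$, I would set $\widetilde{H}:=\wL[N]$ with its natural action on $\wL^X$ via $N$, equip it with the semi-linear $G$-action $\sigma\cdot(a\otimes n)=\sigma(a)\otimes\lambda_G(\sigma)\,n\,\lambda_G(\sigma)^{-1}$ (well defined by the normalization hypothesis), and descend: $H:=\widetilde{H}^G$ is a $K$-Hopf algebra, and faithfully flat descent for $\wL/K$ transfers the Hopf Galois isomorphism down to $K$, producing a Hopf Galois structure on $L=(\wL^X)^G$. Checking that these procedures are mutually inverse, and that two Hopf Galois structures yield the same subgroup exactly when they are isomorphic, completes the correspondence.

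The main obstacle is the structural identification $\widetilde{H}\cong\wL[N]$ with $N$ acting regularly on $\{e_x\}$. This step requires extracting from cocommutativity together with the Hopf Galois isomorphism both that the group-likes alone span $\widetilde{H}$ and that a dimension/stabilizer count forces the induced $N$-action on the idempotents to be simply transitive. Everything after this is formal Galois descent, but it is this structural result that translates Hopf-theoretic data into the combinatorial datum of a regular subgroup of $S_g$ normalized by $\lambda_G(G)$.
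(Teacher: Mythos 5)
This theorem is not proved in the paper at all: it is the Greither--Pareigis theorem, quoted from \cite{G-P}, so there is no internal proof to compare against. Your outline is in fact the standard descent argument of Greither and Pareigis themselves: split $\wL\otimes_K L\cong \wL^{X}$ with $X=G/G'$, observe that $G$ permutes the primitive idempotents via $\lambda_G$, translate Hopf Galois structures on $L/K$ into $G$-equivariant Hopf Galois structures on the split algebra, and classify the latter by regular subgroups $N\subset\Sym(X)$, with the normalization condition encoding the semilinear $G$-action; the converse is Galois descent of $\wL[N]$ with $G$ acting by conjugation through $\lambda_G$. So the route is the right one.

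However, as written your proposal has a genuine gap, and you name it yourself without closing it: the assertion that cocommutativity plus the Hopf Galois isomorphism forces $\widetilde{H}\cong\wL[N]$ with $N$ acting regularly on $\{e_x\}_{x\in X}$ is exactly the non-formal content of the theorem, and it is not automatic that the group-likes of a cocommutative Hopf algebra over $\wL$ span it ($\wL$ is not separably closed, and in positive characteristic even geometrically this can fail without the Galois hypothesis). The standard way to close it is by duality: cocommutativity of $\widetilde H$ makes $\widetilde H^{*}$ a commutative algebra, so $\operatorname{Spec}(\widetilde H^{*})$ is a finite flat group scheme over $\wL$; the Hopf Galois condition says precisely that this group scheme acts simply transitively on the constant scheme $X$, and evaluation at a rational point of $X$ gives a scheme isomorphism $\operatorname{Spec}(\widetilde H^{*})\to X$, forcing the group scheme to be constant, i.e.\ $\widetilde H\cong\wL[N]$ with $N$ regular on $X$. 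Without this (or an equivalent argument), the first half of your correspondence does not get off the ground. You should also actually verify the final sentence you defer: that the semilinear $G$-action on $\wL[N]$ coming from a $K$-form must stabilize $N$ and act on it by conjugation through $\lambda_G$ (this uses that $G$-equivariance of the action on $\wL^{X}$ pins down the permutation action), and that two Hopf Galois structures are isomorphic over $K$ if and only if they give the same subgroup $N$ of $\Sym(X)$ --- this last point is where descent of morphisms, not just of objects, is needed. With those steps supplied, your argument becomes the proof in \cite{G-P}.
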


For a given Hopf Galois structure on a separable field extension $L/K$ of degree $g$, we will refer to the isomorphism class of the corresponding group $N$ as the type of the Hopf Galois
structure. The Hopf algebra $H$ corresponding to a regular subgroup $N$ of $S_g$ normalized by $\lambda_G (G)$ is the $K$-Hopf subalgebra $\wL[N]^G$ of the group algebra $\wL[N]$ fixed under the action of $G$, where $G$ acts on $\wL$ by $K$-automorphisms and on $N$ by conjugation through $\lambda_G$. The Hopf action is induced by $n \mapsto n^{-1}(\overline{1})$, for $n \in N$, where we identify $S_g$ with the group of permutations of $G/G'$ and $\overline{1}$ denotes the class of $1_G$ in $G/G'$.

Childs \cite{Ch1} gives an equivalent  condition to the existence of a Hopf Galois structure introducing the holomorph of the regular subgroup $N$ of $S_g$. Let $\lambda_N:N\to \Sym(N)$ be the morphism given by the action of
$N$ on itself by left translation. The holomorph $\Hol(N)$ of $N$ is the normalizer of $\lambda_N(N)$ in $\Sym(N)$. As abstract groups, we have $\Hol(N)=N\rtimes \Aut(N)$. We state the more precise formulation of Childs' result due to Byott \cite{B} (see also \cite{Ch2} Theorem 7.3).

\begin{theorem}\label{theoB} Let $G$ be a finite group, $G'\subset G$ a subgroup and $\lambda_G:G\to \Sym(G/G')$ the morphism given by the action of
$G$ on the left cosets $G/G'$.
Let $N$ be a group of
order $[G:G']$ with identity element $e_N$. Then there is a
bijection between
$$
{\cal N}=\{\alpha:N\hookrightarrow \Sym(G/G') \mbox{ such that
}\alpha (N)\mbox{ is regular}\}
$$
and
$$
{\cal G}=\{\beta:G\hookrightarrow \Sym(N) \mbox{ such that }\beta
(G')\mbox{ is the stabilizer of } e_N\}
$$
Under this bijection, if $\alpha, \alpha' \in {\cal N}$ correspond to
$\beta, \beta' \in {\cal G}$, respectively, then $\alpha(N)=\alpha'(N)$ if and only if $\beta(G)$ and $\beta'(G)$ are conjugate by an element of $\Aut(N)$; and  $\alpha(N)$ is normalized by
$\lambda_G(G)$ if and only if $\beta(G)$ is contained in the
holomorph $\Hol(N)$ of $N$.
\end{theorem}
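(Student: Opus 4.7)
The plan is to construct mutually inverse bijections between $\mathcal{N}$ and $\mathcal{G}$, and then deduce the two stated equivalences from a single intertwining observation.

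Given $\alpha \in \mathcal{N}$, the regularity of $\alpha(N)$ on $G/G'$ makes the evaluation map $\phi_\alpha\colon N \to G/G'$, $n \mapsto \alpha(n)(\overline{1})$, a bijection of sets. I would then define $\beta\colon G \to \Sym(N)$ by transport of structure, $\beta(g) := \phi_\alpha^{-1} \circ \lambda_G(g) \circ \phi_\alpha$. This $\beta$ is a group homomorphism, is injective (as $\lambda_G$ is, the standing assumption being that $G'$ is core-free in $G$, as happens in the Hopf Galois setting where $\widetilde L$ is the Galois closure of $L/K$), and satisfies $\beta(g)(e_N)=e_N \iff \lambda_G(g)(\overline{1}) = \overline{1} \iff g \in G'$, giving the stabilizer condition. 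For the reverse direction, given $\beta \in \mathcal{G}$, orbit-stabilizer yields a bijection $\psi_\beta\colon G/G' \to N$, $gG' \mapsto \beta(g)(e_N)$, and I set $\alpha(n) := \psi_\beta \circ \lambda_N(n) \circ \psi_\beta^{-1}$, which is a regular embedding since $\lambda_N$ is. Direct substitution confirms the two assignments are inverse.

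The pivotal step, which I would record immediately, is the intertwining identity $\alpha(n) \circ \phi_\alpha = \phi_\alpha \circ \lambda_N(n)$, proved by $\alpha(n)(\alpha(m)(\overline{1})) = \alpha(nm)(\overline{1})$. It says that, via $\phi_\alpha$, the subgroup $\alpha(N) \subset \Sym(G/G')$ corresponds to $\lambda_N(N) \subset \Sym(N)$, while by construction $\lambda_G(G)$ corresponds to $\beta(G)$. From this, the normalizer statement is immediate: $\alpha(N)$ is normalized by $\lambda_G(G)$ iff $\lambda_N(N)$ is normalized by $\beta(G)$, which is the containment $\beta(G) \subset \Norm_{\Sym(N)}(\lambda_N(N)) = \Hol(N)$. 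For the conjugacy statement, if $\alpha(N) = \alpha'(N)$ then $\theta := \alpha^{-1} \alpha'$ lies in $\Aut(N)$, and one computes $\phi_{\alpha'} = \phi_\alpha \circ \theta$; substituting into the definitions gives $\beta'(g) = \theta^{-1} \beta(g) \theta$, conjugation by $\theta$ viewed inside $\Aut(N) \subset \Sym(N)$. The converse runs the same chain in reverse.

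I do not expect a deep obstacle: the heart of the argument is the single intertwining identity above, from which all three conclusions drop out by routine manipulation. The only delicate bookkeeping is the injectivity of $\beta$, which rests on $G'$ being core-free in $G$; once that is noted, the rest of the verification is essentially formal.
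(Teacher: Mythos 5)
The paper itself gives no proof of this theorem (it is quoted from \cite{B} and \cite{Ch2}, Theorem 7.3), so your argument can only be measured against the standard one, and for the most part it coincides with it: transporting $\lambda_G$ and $\lambda_N$ through the orbit bijections $\phi_\alpha$ and $\psi_\beta$, checking the stabilizer condition, verifying that the two assignments are mutually inverse, and reading off the normalizer clause from the intertwining identity is exactly how this is usually done. Two small remarks: your formula for the inverse direction should be $\alpha(n)=\psi_\beta^{-1}\circ\lambda_N(n)\circ\psi_\beta$ (as written it lands in $\Sym(N)$ rather than $\Sym(G/G')$); and the core-freeness of $G'$ that you flag is not merely convenient but necessary for the statement to have any chance --- if the core were nontrivial, $\lambda_G$ would not be injective and in fact ${\cal G}$ would be empty while ${\cal N}$ is not --- so recording it is correct (it holds in the paper's applications, where $\wL$ is a Galois closure).

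The genuine gap is the sentence ``the converse runs the same chain in reverse.'' Reversing your chain proves: if $\beta'=\theta^{-1}\beta\,\theta$ as homomorphisms (pointwise) for some $\theta\in\Aut(N)$, then $\alpha'=\alpha\circ\theta$ and hence $\alpha'(N)=\alpha(N)$. The hypothesis of the clause is weaker: only the subgroups $\beta(G)$ and $\beta'(G)$ are assumed conjugate by an element of $\Aut(N)$. From $\beta'(G)=\theta^{-1}\beta(G)\theta$ one only gets $\beta'=(\theta^{-1}\beta\theta)\circ\sigma$ for some $\sigma\in\Aut(G,G')$, and precomposition by $\sigma$ moves $\alpha$ to $\bar\sigma^{-1}\alpha\bar\sigma$, where $\bar\sigma$ is the permutation of $G/G'$ induced by $\sigma$; this can change the subgroup $\alpha(N)$. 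Indeed, read literally the clause fails: take $G=C_2\times C_2$, $G'=1$, $N=C_4$. Every $\beta\in{\cal G}$ has image the unique regular Klein subgroup of $\Sym(N)\simeq S_4$, so all the $\beta(G)$ coincide, while the images $\alpha(N)$ run over the three distinct regular cyclic subgroups of $\Sym(G/G')$; no bijection between ${\cal N}$ and ${\cal G}$ can then satisfy the stated equivalence, since one side is partitioned into three classes of size $2$ and the other into a single class of size $6$. What your computation actually establishes, and what should be proved (and is what gets used), is the equivalence $\alpha(N)=\alpha'(N)\Longleftrightarrow \beta'=\theta^{-1}\beta\theta$ for some $\theta\in\Aut(N)$, i.e.\ conjugacy of the embeddings rather than merely of their images; with the hypothesis so read, your reverse chain closes without difficulty. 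Note that the paper's later uses of the theorem (Corollary \ref{cor} and the criterion $\beta(G)\subset\Hol(N)$) rely only on the normalizer clause and on the fiber sizes $|\Aut(N)|$ and $|\Aut(G,G')|$, not on this clause as literally stated.
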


As a corollary to the preceding theorem Byott \cite{B}, Proposition 1, obtains the following formula to count Hopf Galois structures.

\begin{corollary}\label{cor} Let $L/K$ be a separable field extension of degree $g$, $\wL$ its Galois closure, $G=\Gal(\wL/K), G'=\Gal(\wL/L)$. Let $N$ be an abstract group of order $g$ and let $\Hol(N)$ denote the holomorph of $N$. The number $a(N,L/K)$ of Hopf Galois structures of type $N$ on $L/K$ is given by the following formula

$$a(N,L/K)= \dfrac{|\Aut(G,G')|}{|\Aut(N)|} \, b(N,G,G')$$

\noindent where $\Aut(G,G')$ denotes the group of automorphisms of $G$ taking $G'$ to $G'$, $\Aut(N)$ denotes the group of automorphisms of $N$ and $b(N,G,G')$ denotes the number of subgroups $G^*$ of $\Hol(N)$ such that there is an isomorphism from $G$ to $G^*$ taking $G'$ to the stabilizer in $G^*$ of $1_N$.

\end{corollary}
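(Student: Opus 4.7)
The plan is to count Hopf Galois structures via the bijection of Theorem \ref{theoB}, passing carefully between ``injections of $N$'' and ``regular subgroups'' on one side, and between ``injections of $G$'' and ``subgroups of $\Hol(N)$'' on the other. By Theorem \ref{G-P}, $a(N,L/K)$ equals the number of regular subgroups of $\Sym(G/G')$ isomorphic to $N$ and normalized by $\lambda_G(G)$. I would introduce the set $\mathcal{N}_0 \subseteq \mathcal{N}$ of injections $\alpha$ with $\alpha(N)$ normalized by $\lambda_G(G)$: two injections in $\mathcal{N}_0$ share the same image if and only if they differ by precomposition with an element of $\Aut(N)$, so
$$
a(N,L/K) \;=\; \frac{|\mathcal{N}_0|}{|\Aut(N)|}.
$$

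Next, by the final clause of Theorem \ref{theoB}, $\mathcal{N}_0$ is in bijection with the set $\mathcal{G}_0$ of injections $\beta \in \mathcal{G}$ satisfying $\beta(G) \subseteq \Hol(N)$. I would then count $\mathcal{G}_0$ by partitioning its elements according to their image $G^* := \beta(G) \subseteq \Hol(N)$. The stabilizer of $1_N$ in $\Hol(N)$ is exactly $\Aut(N)$ (since an element $\lambda_N(n)\varphi$ of $\Hol(N)=\lambda_N(N)\rtimes\Aut(N)$ sends $1_N$ to $n$), so the condition $\beta(G')=\Stab_{\Hol(N)}(1_N)\cap G^*$ means the subgroups $G^*$ that can arise as such an image are precisely those counted by $b(N,G,G')$. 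For a fixed such $G^*$, once one isomorphism $\beta_0 : G \to G^*$ sending $G'$ to $\Stab_{G^*}(1_N)$ is chosen, every other $\beta \in \mathcal{G}_0$ with image $G^*$ has the form $\beta_0 \circ \sigma$ with $\sigma \in \Aut(G)$, and the requirement on $\beta(G')$ forces $\sigma(G')=G'$; this gives exactly $|\Aut(G,G')|$ injections in the fiber over $G^*$. Consequently
$$
|\mathcal{G}_0| \;=\; b(N,G,G') \cdot |\Aut(G,G')|,
$$
and substituting into the earlier formula yields the claimed expression for $a(N,L/K)$.

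The argument is, in substance, a double count, and I do not anticipate any essentially difficult step. The main point requiring care, and hence the main potential pitfall, is the bookkeeping: Theorem \ref{theoB} is a bijection between sets of \emph{injections}, while both $a(N,L/K)$ and $b(N,G,G')$ are defined as counts of \emph{subgroups}. One must therefore be consistent about dividing by the $\Aut(N)$-fiber size on the $\mathcal{N}$-side and by the $\Aut(G,G')$-fiber size on the $\mathcal{G}$-side, and verify that the bijection of Theorem \ref{theoB} correctly intertwines these two fibrations, which it does because the equivalence relation ``same image $\alpha(N)$'' on $\mathcal{N}_0$ is not the same as the equivalence relation ``same image $\beta(G)$'' on $\mathcal{G}_0$ and the two counts are kept separate in the argument above.
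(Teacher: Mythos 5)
Your proposal is correct: the paper does not prove this corollary (it cites Byott's Proposition 1), and your argument --- computing $a(N,L/K)=|\mathcal{N}_0|/|\Aut(N)|$ on one side, $|\mathcal{G}_0|=b(N,G,G')\cdot|\Aut(G,G')|$ on the other, and linking them by the bijection $\mathcal{N}_0\leftrightarrow\mathcal{G}_0$ from Theorem \ref{theoB} --- is exactly the standard derivation behind the cited result, including the correct observation that the two fibrations need not correspond under the bijection since only the total counts are compared.
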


Recently a relationship has been found between Hopf Galois structures and an algebraic structure called brace. Classical braces were introduced by W. Rump \cite{R}, as a generalisation of radical rings, in order to study the non-degenerate involutive set-theoretic solutions of the quantum Yang-Baxter equation. Recently, skew braces were introduced by Guarnieri and Vendramin \cite{G-V} in order to study the non-degenerate (not necessarily involutive) set-theoretic solutions. This connection is further exploited in \cite{SV}, where the relation of braces with other algebraic structures is established.

\begin{definition} A skew (left) brace is a set $B$ endowed with two binary operations $\cdot$ and $\circ$ such that $(B,\cdot)$ and $(B,\circ)$ are groups (not necessarily abelian) and the two operations are related by the skew brace property

$$ a\circ(b\cdot c)=(a\circ b)\cdot a^{-1} \cdot (a\circ c), \text{\ for all \ } a,b,c \in B,$$

\noindent
where $a^{-1}$ denotes the inverse of $a$ in $(B,\cdot)$. The groups $(B,\cdot)$ and $(B,\circ)$ are called respectively the additive group and the multiplicative group of the skew brace $B$. If the additive group of $B$ is abelian, we call $B$ a (classical) brace.

A map between skew braces is a skew brace morphism if it is a group morphism both between the additive and the multiplicative groups.

\end{definition}

The relation between braces and Hopf-Galois structures was first proved by Bachiller for classical braces (see \cite{Ba} Proposition 2.3) and generalized by Guarnieri and Vendramin to skew braces.

\begin{proposition}[\cite{G-V} Proposition 4.3] \label{GV}
Let $(N,\cdot)$ be a group. There is a bijective correspondence between isomorphism classes of left skew braces with additive group isomorphic to $(N,\cdot)$ and classes of regular subgroups of $\Hol(N)$ under conjugation by elements of $\Aut(N)$.
\end{proposition}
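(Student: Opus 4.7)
The plan is to produce explicit maps in both directions and then pass to equivalence classes. Starting from a skew brace $(B,\cdot,\circ)$ whose additive group is identified with $(N,\cdot)$, I would define, for each $a\in B$, a map $T_a:N\to N$ by $T_a(b)=a^{-1}\cdot(a\circ b)$. The first step is to apply the skew brace identity directly to $a\circ(b\cdot c)$: after cancelling, one reads off that $T_a$ is multiplicative on $(N,\cdot)$, hence lies in $\Aut(N)$. A short further calculation, using associativity of $\circ$ together with the tautology $a\circ b=a\cdot T_a(b)$, shows that $a\mapsto T_a$ is a group homomorphism $(B,\circ)\to\Aut(N)$. I would then define $\psi:(B,\circ)\to\Hol(N)=N\rtimes\Aut(N)$ by $\psi(a)=(a,T_a)$ and verify that $\psi$ is an injective homomorphism; since $\psi(a)(1_N)=a$, the image $\psi(B)$ is a regular subgroup of $\Hol(N)$.

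In the reverse direction, given a regular subgroup $G^*\subset\Hol(N)$, the orbit map $g\mapsto g(1_N)$ is a bijection $G^*\to N$, which I would use to transport the group law of $G^*$ to a new operation $\circ$ on $N$. Writing each $g\in G^*$ uniquely as $(n,\phi)\in N\rtimes\Aut(N)$ forces $n=g(1_N)$, so the transported operation takes the explicit form $a\circ b=a\cdot\phi_a(b)$ for a uniquely determined function $a\mapsto\phi_a$ with values in $\Aut(N)$. The brace identity is then a direct substitution using that $\phi_a$ is an additive automorphism. A routine check shows the two constructions are mutually inverse at the level of structures on a fixed underlying set.

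The genuine content lies in the passage to equivalence classes, and this is the step I expect to be the main obstacle. Any isomorphism $f:B_1\to B_2$ of skew braces whose additive groups are both identified with $(N,\cdot)$ is in particular an element of $\Aut(N)$; I would then show that $f$ transports the first brace structure to the second precisely when the corresponding regular subgroups $G_1^*,G_2^*\subset\Hol(N)$ are related by $G_2^*=f\,G_1^*\,f^{-1}$, where $f$ is viewed inside $\Hol(N)$ through the canonical embedding $\Aut(N)\hookrightarrow\Hol(N)$. The care needed here is the bookkeeping: one must track how the pair $(a,\phi_a)$ transforms under conjugation by $\sigma\in\Aut(N)$ inside the semidirect product and match it with the effect of $\sigma$ on the multiplicative operation $\circ$. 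Once this compatibility is pinned down, combining with the first two steps yields the claimed bijection.
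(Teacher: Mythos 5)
Your proposal is correct and is essentially the standard Guarnieri--Vendramin argument that the paper cites rather than proves: your map $a\mapsto (a,T_a)$ is exactly the $\lambda$-map into $\Hol(N)=N\rtimes\Aut(N)$, and your reverse construction $a\circ b=a\cdot\phi_a(b)$ is precisely the construction the paper recalls immediately after the proposition. The equivalence-class step you flag does go through as you describe: for $f\in\Aut(N)$, being a brace isomorphism amounts to $\phi^{(2)}_{f(a)}=f\,\phi^{(1)}_a\,f^{-1}$ for all $a$, which is exactly the condition $G_2^*=(1,f)\,G_1^*\,(1,f)^{-1}$ inside $\Hol(N)$.
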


We recall that, if $G$ is a regular subgroup of $\Hol(N)$, then $\pi:G \rightarrow N, (x,\varphi) \mapsto x$ is bijective. Then $N$ with the operation

$$x \circ y= \pi(\pi^{-1}(x) \pi^{-1}(y))=x \varphi(y)$$

\noindent
is a group isomorphic to $G$ and $(N,\cdot,\circ)$ is a skew left brace.

To a skew left brace one associates its socle, annihilator and its group of automorphisms.

\begin{definition} Let $(B,\cdot,\circ)$ be a skew left brace. We define its socle $\Soc(B)$ by

$$\Soc(B):= \{ a \in B \, | \, a\circ b = ab, b(b\circ a)=(b\circ a) b , \, \forall b \in B \},$$

\noindent
and its annihilator $\Ann(B)$ by

$$\Ann(B):=\Soc(B) \cap Z(B,\circ), $$

\noindent
where $\Z(B,\circ)$ is the centre of the group $(B,\circ)$.

\end{definition}

\begin{proposition}[\cite{SV} Example 4.3] \label{soc} Let $(B,\cdot,\circ)$ be a skew left brace. Then $\Soc(B)$ is contained in the center $Z(B,\cdot)$ of $(B,\cdot)$. Moreover it is a normal subgroup of $(B,\circ)$.
\end{proposition}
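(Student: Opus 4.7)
The plan is to introduce the standard $\lambda$-map of a skew brace and rephrase the socle conditions in its terms. Setting $\lambda_a(b)=a^{-1}(a\circ b)$, so that $a\circ b=a\cdot\lambda_a(b)$, the brace axiom forces each $\lambda_a$ to be an automorphism of $(B,\cdot)$ and $\lambda\colon(B,\circ)\to\Aut(B,\cdot)$ to be a group homomorphism. In particular $\lambda_{\bar b}=\lambda_b^{-1}$, where $\bar b$ denotes the $(B,\circ)$-inverse of $b$, and a direct check gives $\bar b=\lambda_b^{-1}(b^{-1})$, hence $\lambda_b(\bar b)=b^{-1}$. Under this dictionary, condition~(i) of the socle definition becomes $\lambda_a=\Id$, while a short rearrangement shows that condition~(ii), $b(b\circ a)=(b\circ a)b$ for all $b$, is equivalent to $\lambda_b(a)$ commuting with $b$ in $(B,\cdot)$ for every $b$.

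For the inclusion $\Soc(B)\subseteq Z(B,\cdot)$ I would use a symmetry trick: given $a\in\Soc(B)$ and $b\in B$, apply condition~(ii) with $b$ replaced by $\bar b$ to get that $\lambda_b^{-1}(a)$ commutes with $\bar b$ in $(B,\cdot)$; applying the $(B,\cdot)$-automorphism $\lambda_b$ to this identity then yields that $a$ commutes with $\lambda_b(\bar b)=b^{-1}$, hence with $b$.

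For normality, fix $s\in\Soc(B)$ and $b\in B$ and compute $b\circ s\circ\bar b$. Condition~(i) rewrites $s\circ\bar b$ as $s\cdot\bar b$, and then the brace axiom gives
\[
b\circ(s\cdot\bar b)=(b\circ s)\cdot b^{-1}\cdot(b\circ\bar b)=(b\circ s)\cdot b^{-1}=b\cdot\lambda_b(s)\cdot b^{-1}.
\]
Because the first part places $s$ in $Z(B,\cdot)$ and automorphisms preserve the centre, $\lambda_b(s)\in Z(B,\cdot)$, so this conjugate collapses to $\lambda_b(s)$, giving $b\circ s\circ\bar b=\lambda_b(s)$. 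It remains to verify that $\lambda_b(s)$ lies in $\Soc(B)$: it is central in $(B,\cdot)$ by the same argument, and $\lambda_{\lambda_b(s)}=\lambda_{b\circ s\circ\bar b}=\lambda_b\,\lambda_s\,\lambda_b^{-1}=\Id$ by condition~(i) for $s$ and the homomorphism property of $\lambda$; centrality in $(B,\cdot)$ then forces condition~(ii) as well. That $\Soc(B)$ is a subgroup of $(B,\circ)$ at all follows from the same two observations applied to $\circ$-products and $\circ$-inverses.

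The main obstacle is setting up the dictionary in the first paragraph: that $\lambda$ is a group homomorphism into $\Aut(B,\cdot)$ and that $\bar b=\lambda_b^{-1}(b^{-1})$ are formal consequences of the skew brace axiom but are not recorded in the excerpt, so they either need to be cited (from, e.g., \cite{G-V}) or verified in one line. With the dictionary in hand, the only substantive step is the trick of testing condition~(ii) at $\bar b$ rather than $b$, which pulls $b^{-1}$ out through the automorphism $\lambda_b$ and yields the commutativity of $a$ with $b$; everything else is routine.
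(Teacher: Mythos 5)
Your argument is correct. Note that the paper itself offers no proof of this proposition: it is quoted from \cite{SV} (Example 4.3), so there is no internal argument to compare against. What you give is essentially the standard proof, and it effectively re-derives the usual characterization $\Soc(B)=\ker\lambda\cap Z(B,\cdot)$: condition (i) is exactly $\lambda_a=\Id$, your symmetry trick (testing condition (ii) at $\bar b$ and pushing the identity through the automorphism $\lambda_b$, using $\lambda_b(\bar b)=b^{-1}$) shows condition (ii) alone already forces $a\in Z(B,\cdot)$, and conversely centrality gives (ii) because automorphisms preserve the center. With that description in hand, normality in $(B,\circ)$ is immediate the way you do it: $b\circ s\circ\bar b=\lambda_b(s)$, which lies in $\ker\lambda$ because $\lambda$ is a homomorphism on $(B,\circ)$ and in $Z(B,\cdot)$ because $\lambda_b$ is an automorphism. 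The two background facts you lean on --- that $\lambda\colon(B,\circ)\to\Aut(B,\cdot)$ is a well-defined homomorphism and that $\bar b=\lambda_b^{-1}(b^{-1})$ --- are indeed not in the paper but are one-line consequences of the brace axiom (for the first, apply the axiom to $a\circ(b\cdot c)$ and to $a\circ(b\cdot b^{-1})$), and are proved in \cite{G-V}; citing them, as you propose, is fine. The closing remark that $\Soc(B)$ is a $\circ$-subgroup is stated tersely, but it is routine from your dictionary: for $a,a'\in\ker\lambda\cap Z(B,\cdot)$ one has $a\circ a'=a\cdot a'$ and $\bar a=a^{-1}$, so closure follows from $\ker\lambda$ being a subgroup of $(B,\circ)$ and $Z(B,\cdot)$ a subgroup of $(B,\cdot)$. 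No gaps beyond these explicitly flagged, standard points.
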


An automorphism of a skew brace $B$ is a bijection from $B$ to $B$ which is a group morphism with respect to both operations in $B$. If $B$ is the skew brace associated to a regular subgroup $G$ of $\Hol(N)$, we have the following characterization of the group of automorphisms $\Aut(B)$ of $B$ (see \cite{Z}, formula (3) below Proposition 2.5).

$$\Aut(B) \simeq \{ \varphi \in \Aut(N): \varphi G \varphi^{-1} \subset G \}.$$

\vspace{0.2cm}
Bachiller \cite{Ba1} classified braces of order $p^3$, for a prime
number $p$. Nejabati Zenouz \cite{Z, ZT} classified skew left braces of order $p^3$, for a prime number $p$. Dietzel \cite{D} classified skew left braces of order $p^2q$, for $p$ and $q$ prime numbers with $p<q-1$.  Catino, Colazzo and Stefanelli \cite{CCS} presented a method to determine skew left braces with non-trivial annihilator.
In \cite{CS3} we determined exactly the possible sets of Hopf Galois structure types for separable field extensions of degree $2p^2$.
In this paper, we consider the groups $N$ of order $2p^2$, for $p$ an odd prime number. For such a group, we determine all regular subgroups of the holomorph $\Hol(N)$. This result leads on the one hand to the determination of the Hopf Galois structures on a Galois field extension of degree $2p^2$. On the other hand it allows to classify the skew left braces of order $2p^2$. We note that we obtain in particular skew braces with trivial annihilator as for instance all those with additive group the nonabelian group of order $2p^2$ with noncyclic $p$-Sylow group and trivial center. Moreover we determine the separable field extensions of degree $2p^2$ having a Hopf Galois
structure of cyclic type and the number of these structures. In \cite{AB1} and \cite{AB2} Acri and Bonatto enumerate the skew left braces of size $p^2q$, for $p, q$ primes. In \cite{CCC} Campedel, Caranti and Del Corso classify the Hopf-Galois structures on Galois extensions of
degree $p^2q$, such that the Sylow p-subgroups of the Galois group are cyclic, and the corresponding skew braces. In \cite{AB} Alabdali and Byott determine the number of skew left braces of square free size.

\section{Groups of order $2p^2$, for $p$ an odd prime number}\label{groups}

Let $p$ denote an odd prime. As seen in \cite{CS3}, there are five groups of order $2p^2$, up to isomorphism. These are $C_{2p^2}$, $D_{2p^2}$, $C_p \times C_{2p}$, $C_p \times D_{2p}$ and $(C_p\times C_p) \rtimes C_2$. The automorphism group for each of them was determined in \cite{CS3}. Let us recall them.

\begin{enumerate}[1)]
\item $\Aut(C_{2p^2}) \simeq (\Z/2p^2 \Z)^*$ is cyclic of order $p(p-1)$.
\item For $G=D_{2p^2}=\langle r,s|r^{p^2}=s^2=1,srs=r^{-1} \rangle$, an automorphism is given by $r\mapsto r^i, s \mapsto r^j s$, with $0\leq i,j\leq p^2-1, p\nmid i$. We have then $|\Aut(G)|=(p^2-p)p^2=p^3(p-1)$.
\item For $G=C_p\times C_p \times C_2=\langle a \rangle \times \langle b \rangle \times \langle c \rangle$, $c$ is the unique element of order 2. An element in $\Aut(G)$ is then given by $a \mapsto a^i b^j, b\mapsto a^k b^l, c \mapsto c$, with $0\leq i,j,k,l \leq p-1, p\nmid il-jk$. We have then $\Aut(G) \simeq \GL(2,p)$ and $|\Aut(G)|=(p^2-1)(p^2-p)=p(p+1)(p-1)^2$.
\item For $G=C_p\times D_{2p}$, let $C_p=\langle c \rangle$ and $D_{2p}=\langle r,s|r^p=s^2=1,srs=r^{-1} \rangle$. An automorphism of $G$ is given by $c \mapsto c^k, r \mapsto r^i, s \mapsto r^j s$, with $1\leq i,k\leq p-1, 0\leq j \leq p-1$. We have then $|\Aut(G)|=p(p-1)^2$.
\item For $G=(C_p\times C_p) \rtimes C_2$, we write $C_p\times C_p=\langle a \rangle \times \langle b \rangle$ and $C_2=\langle c \rangle$. We have $cac=a^{-1}$ and $cbc=b^{-1}$. An automorphism of $G$ is given by $a \mapsto a^ib^j, b \mapsto a^k b^l, c \mapsto a^mb^n c$, with $0\leq i,j,k,l,m,n\leq p-1, p\nmid il-jk$. We have then $|\Aut(G)|=(p^2-1)(p^2-p)p^2=p^3(p+1)(p-1)^2$.

\end{enumerate}

\section{Galois extensions of degree $2p^2$}\label{Galois}

In this section we consider a Galois field extension $L/K$ of degree $2p^2$ and determine the number of Hopf Galois structures on $L/K$ for each possible type. We shall prove the following theorem throughout this section.

\begin{theorem}\label{number}
Let $L/K$ be a Galois field extension of degree $2p^2$, with $p$ an odd prime number. Let $G=\Gal(L/K)$ and let $N$ be a group of order $2p^2$. Then the number of Hopf Galois structures on $L/K$ of type $N$ is as given in the following table.

\begin{center}
\begin{tabular}{|c||c|c|c|c|c|}
\hline
Galois group $G$ $\diagdown$ type $N$ &  $C_{2p^2}$& $D_{2p^2}$ & $C_p\times C_{2p}$ & $C_p \times D_{2p}$ & $(C_p\times C_p) \rtimes C_2$ \\
\hline
\hline
$C_{2p^2}$ & $p$  & $2p$ & $0$ & $0$ & $0$ \\
\hline
$D_{2p^2}$ &   $p^2$  & $2$ & $0$ & $0$ & $0$ \\
\hline
$C_p\times C_{2p}$ & $0$ & $0$ & $p^2$ & $2p(p+1)$ & $p(3p+1)$ \\
\hline
$C_p\times D_{2p}$ & $0$ & $0$ & $p^2$ & $2p(p+1)$ & $p(3p+1)$ \\
\hline
$(C_p\times C_p)\rtimes C_2$ & $0$ & $0$ & $p^2$ & $2p^2(p+1)$  & $2p^3+p^2-p+2$ \\

\hline
\end{tabular}
\end{center}

\end{theorem}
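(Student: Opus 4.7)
The plan is to apply Corollary \ref{cor}. Since $L/K$ is Galois, $G' = 1$, so $\Aut(G,G') = \Aut(G)$, and the condition on a subgroup $G^* \subset \Hol(N)$ reduces to $G^* \cong G$ with $|G^*| = 2p^2$. Hence
$$a(N,L/K) = \frac{|\Aut(G)|}{|\Aut(N)|}\cdot b(N,G),$$
where $b(N,G)$ is the number of regular subgroups of $\Hol(N)$ isomorphic to $G$. The orders $|\Aut(G)|$ and $|\Aut(N)|$ are already recorded in Section \ref{groups}, so the computation reduces to enumerating the regular subgroups of $\Hol(N)$ for each of the five groups $N$ of order $2p^2$ and sorting them by isomorphism type. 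By Proposition \ref{GV}, this same enumeration simultaneously yields the classification of skew left braces with additive group $N$ to be used later in the paper.

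The first step is to establish the block structure of the table. Partition the five groups by the isomorphism class of their Sylow $p$-subgroup: the family $\{C_{2p^2}, D_{2p^2}\}$ has cyclic Sylow $p$-subgroup of order $p^2$, whereas $\{C_p\times C_{2p},\, C_p\times D_{2p},\, (C_p\times C_p)\rtimes C_2\}$ has elementary abelian Sylow $p$-subgroup. Since the Sylow $p$-subgroup of $N$ is characteristic in $N$, the Sylow $p$-subgroup of $\Hol(N) = N\rtimes\Aut(N)$ equals $\Syl_p(N)\rtimes\Syl_p(\Aut(N))$. A short direct computation with the formulas of Section \ref{groups} shows that for every $N$ in the second family this Sylow $p$-subgroup has exponent $p$. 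Consequently $\Hol(N)$ contains no subgroup isomorphic to $C_{2p^2}$ or $D_{2p^2}$, which accounts for the two blocks of zeros off the diagonal.

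On the two remaining diagonal blocks I would parametrize regular subgroups $G^* \subset \Hol(N)$ as graphs $\{(x, f(x)) : x \in N\}$ of set maps $f : N \to \Aut(N)$ satisfying the cocycle-type condition that makes the graph a subgroup of $N\rtimes\Aut(N)$. Fixing a presentation of $N$, I would solve for the images in $\Aut(N)$ of a generating set compatibly with the relations of $N$, and for each solution read off the isomorphism class of $G^*$ by computing the orders and commutators of the pairs $(g, f(g))$. For the cyclic-Sylow family the holomorphs are small enough ($|\Hol(C_{2p^2})| = 2p^3(p-1)$, $|\Hol(D_{2p^2})| = 2p^5(p-1)$) that only a handful of cases arise and the top-left $2\times 2$ block is filled in quickly. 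For the elementary-Sylow family one first fixes the order-$p^2$ subgroup of $G^*$ using the $p$-Sylow structure of $\Hol(N)$, then adjoins a suitable involution $(c,\alpha)\in\Hol(N)$ with $\alpha$ an involution of $\Aut(N)$, and checks which pairs assemble into a regular subgroup of the required isomorphism type.

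The main obstacle will be the case $N = (C_p\times C_p)\rtimes C_2$, whose automorphism group has order $p^3(p+1)(p-1)^2$ and which produces the most intricate entry $2p^3+p^2-p+2$. Here the involutions of $\Hol(N)$ split into several families that must be enumerated separately, and for each family one must determine exactly with which order-$p^2$ subgroups of the Sylow $p$-subgroup of $\Hol(N)$ it combines to form a regular subgroup; the accounting must distinguish isomorphism types of the resulting $G^*$ and is the source of the non-polynomial-looking constant term. Once $b(N,G)$ has been determined for each pair, substitution into the displayed formula with the orders from Section \ref{groups} produces the entries of the table; in particular the coincidence of the rows indexed by $C_p\times C_{2p}$ and $C_p\times D_{2p}$ reflects the identity $b(N, C_p\times D_{2p}) = (p+1)\cdot b(N, C_p\times C_{2p})$ that must fall out of the case analysis.
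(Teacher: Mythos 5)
Your reduction via Corollary \ref{cor} (with $G'=1$, so $\Aut(G,G')=\Aut(G)$) and your overall plan --- view regular subgroups of $\Hol(N)$ as graphs of maps $N\to\Aut(N)$, fix the order-$p^2$ part first, then adjoin a suitable involution --- is exactly the route the paper takes (Lemma \ref{equiv} followed by the case-by-case analysis of Sections \ref{ciclic}--\ref{semi}). However, there are two genuine gaps. First, your exponent-$p$ argument disposes of only one of the two off-diagonal blocks of zeros: it shows that when $N$ has elementary abelian Sylow $p$-subgroup, $\Hol(N)$ contains no element of order $p^2$, hence no subgroup isomorphic to $C_{2p^2}$ or $D_{2p^2}$. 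It says nothing about the other block, namely that $\Hol(C_{2p^2})$ and $\Hol(D_{2p^2})$ contain no \emph{regular} subgroup isomorphic to $C_p\times C_{2p}$, $C_p\times D_{2p}$ or $(C_p\times C_p)\rtimes C_2$. For these two holomorphs nonexistence of subgroups is simply false: the Sylow $p$-subgroup of $\Hol(C_{2p^2})$ is the modular group of order $p^3$ and that of $\Hol(D_{2p^2})$ has order $p^5$, and both contain copies of $C_p\times C_p$. What has to be proved is that no such elementary abelian subgroup acts semiregularly, i.e.\ that the Sylow $p$-subgroup of a regular subgroup of $\Hol(N)$ is forced to be isomorphic to $\Syl_p(N)$; the paper obtains this from Theorem 3 of \cite{CS3}, and you need either that result or a direct fixed-point argument (e.g.\ every $C_p\times C_p$ inside these Sylow subgroups contains a nontrivial automorphism part fixing $1_N$).

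Second, the proposal never actually produces the table. Every nonzero entry --- $p$, $2p$, $p^2$, $2$, $2p(p+1)$, $p(3p+1)$, $2p^2(p+1)$, $2p^3+p^2-p+2$ --- is deferred to computations you say you \emph{would} carry out, but those computations (determining the semiregular subgroups of order $p^2$ of each $\Hol(N)$, the admissible involutions, which pairs generate a regular subgroup, and the bookkeeping by isomorphism type and by the number of Sylow $p$-subgroups of $\Hol(N)$) constitute essentially the whole content of Theorem \ref{number} and occupy the paper's Sections \ref{ciclic}--\ref{semi}. Nothing in your text certifies even a single entry, let alone the delicate count $2p^3+p^2-p+2$ for $N=(C_p\times C_p)\rtimes C_2$; and the identity $b(N,C_p\times D_{2p})=(p+1)\,b(N,C_p\times C_{2p})$ that you invoke to explain the equal rows is an a posteriori observation, not something available before the enumeration is done. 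As it stands, the submission is a correct outline of the method, with a flawed justification of half of the zero entries and with the quantitative heart of the theorem missing.
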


\begin{lemma}\label{equiv} The statement in Theorem \ref{number} is equivalent to the number of transitive subgroups of $\Hol(N)$ isomorphic to $G$ being as given in the following table.

\begin{center}
\begin{tabular}{|c||c|c|c|c|c|}
\hline
$G$ $\diagdown$ $N$ &  $C_{2p^2}$& $D_{2p^2}$ & $C_p\times C_{2p}$ & $C_p \times D_{2p}$ & $(C_p\times C_p) \rtimes C_2$ \\
\hline
\hline
$C_{2p^2}$ & $p$  & $2p^3$ & $0$ & $0$ & $0$ \\
\hline
$D_{2p^2}$ &   $1$  & $2$ & $0$ & $0$ & $0$ \\
\hline
$C_p\times C_{2p}$ & $0$ & $0$ & $p^2$ & $2p$ & $p^3(3p+1)$ \\
\hline
$C_p\times D_{2p}$ & $0$ & $0$ & $p^2(p+1)$ & $2p(p+1)$ & $p^3(p+1)(3p+1)$ \\
\hline
$(C_p\times C_p)\rtimes C_2$ & $0$ & $0$ & $1$ & $2$  & $2p^3+p^2-p+2$ \\

\hline
\end{tabular}
\end{center}
\end{lemma}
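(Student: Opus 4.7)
The plan is to derive Lemma \ref{equiv} as a direct application of Corollary \ref{cor} specialised to the Galois setting. Since $L/K$ is Galois of degree $2p^2$, its Galois closure equals $L$ and $G' = \Gal(\wL/L) = 1$, so $\Aut(G, G') = \Aut(G)$ and Corollary \ref{cor} reads
$$a(N, L/K) = \frac{|\Aut(G)|}{|\Aut(N)|}\, b(N, G, 1),$$
where $b(N, G, 1)$ counts subgroups $G^* \subset \Hol(N)$ for which there exists an isomorphism $G \xrightarrow{\sim} G^*$ sending $1$ to the stabilizer of $1_N$ in $G^*$. That stabilizer is forced to be trivial, and since $|G^*| = |G| = |N|$, this is equivalent to $G^* \subset \Sym(N)$ acting freely on $N$; but for a subgroup of $\Sym(N)$ of order $|N|$, acting freely, acting transitively, and acting regularly all coincide. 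Hence $b(N, G, 1)$ equals exactly the number of transitive subgroups of $\Hol(N)$ abstractly isomorphic to $G$, which is the quantity tabulated in Lemma \ref{equiv}.

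With this identification in hand, it then suffices to verify, for each of the twenty-five ordered pairs $(G, N)$ of groups of order $2p^2$, that the entry in Theorem \ref{number} is obtained from the corresponding entry in Lemma \ref{equiv} upon multiplication by $|\Aut(G)|/|\Aut(N)|$. The five automorphism group orders required are listed in Section \ref{groups}; each ratio $|\Aut(G)|/|\Aut(N)|$ is a monomial in $p, p-1, p+1$, and the check reduces to one line of arithmetic per pair. For example, $(G, N) = (C_{2p^2}, D_{2p^2})$ gives ratio $p(p-1)/(p^3(p-1)) = 1/p^2$, matching $(2p^3) \cdot (1/p^2) = 2p$; the reverse pair $(D_{2p^2}, C_{2p^2})$ gives ratio $p^2$ matching $1 \cdot p^2 = p^2$; the five diagonal entries are automatic since $|\Aut(G)| = |\Aut(N)|$; and the zero entries of the two tables sit in the same positions, so they match trivially.

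No conceptual obstacle is expected: the lemma is a purely formal reindexing of the output of Corollary \ref{cor}, the only content beyond the corollary being the observation that in the Galois case the required subgroups of $\Hol(N)$ are exactly the transitive ones of the correct isomorphism type. The only labor is the twenty-five case-by-case arithmetic verifications, and this labor is minimized by grouping the nonzero pairs by the shape of the ratio $|\Aut(G)|/|\Aut(N)|$.
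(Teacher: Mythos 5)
Your proposal is correct and follows essentially the same route as the paper, whose proof is simply the one-line application of Corollary \ref{cor} (with $G'=1$ in the Galois case, so $b(N,G,1)$ counts the regular, equivalently transitive, subgroups of $\Hol(N)$ isomorphic to $G$) combined with the automorphism group orders from Section \ref{groups}; your write-up just makes the identification $b(N,G,1)=$ number of transitive subgroups and the entry-by-entry arithmetic explicit. The sample verifications you give are accurate, so nothing is missing.
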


\begin{proof} The result follows by applying Corollary \ref{cor} and the determination of the automorphism groups of the groups of order $2p^2$ given in Section \ref{groups}.
\end{proof}

Applying Theorem 3 in \cite{CS3} and the fact that the only groups of order $p^2$ are $C_{p^2}$ and $C_p\times C_p$, we obtain that, for a given group $N$, with $p$-Sylow subgroup $\Syl_p(N)$, the transitive subgroups $G$ of $\Hol(N)$ of order $2p^2$ have a $p$-Sylow subgroup $\Syl_p(G)$ isomorphic to $\Syl_p(N)$. This gives all zeros in the table in Lemma \ref{equiv}. Moreover $\Syl_p(G)$ is a semiregular subgroup of $\Hol(N)$. In each case, given $N$, we shall determine first the semiregular subgroups of $\Hol(N)$ isomorphic to $\Syl_p(N)$ and afterwards the elements of order 2 normalizing those subgroups and generating together a regular subgroup of $\Hol(N)$. We will then obtain explicitly all regular subgroups of $\Hol(N)$. This will prove the validity of the data in the table in Lemma \ref{equiv}, column by column.

\subsection{Regular subgroups of $\Hol(C_{2p^2})$}\label{ciclic}

\begin{lemma}\label{le11} $\Hol(C_{2p^2})$ has exactly $p$ subgroups of order $p^2$, all of them acting on $C_{2p^2}$ without fixed points.
\end{lemma}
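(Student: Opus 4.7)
The plan is to exploit that $\Hol(N)$ has a unique Sylow $p$-subgroup $P$, exhibit $P$ explicitly, and then enumerate its cyclic subgroups of order $p^2$ and verify their semiregularity.

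First, I would write $\Hol(N)=N\rtimes\Aut(N)$ with $N=\langle n\rangle\cong C_{2p^2}$ and $\Aut(N)\cong(\Z/2p^2\Z)^*$ cyclic of order $p(p-1)$; let $\sigma:n\mapsto n^{1+2p}$ generate its unique subgroup of order $p$. The Sylow $p$-subgroup $P_N=\langle n^2\rangle\cong C_{p^2}$ of $N$ is characteristic, hence normal in $\Hol(N)$. Since the unique involution $n^{p^2}$ of $N$ is fixed by every automorphism, the quotient $\Hol(N)/P_N\cong C_2\times\Aut(N)$ is abelian and so has a unique Sylow $p$-subgroup; pulling back produces a unique Sylow $p$-subgroup $P=\langle n^2\rangle\rtimes\langle\sigma\rangle$ of $\Hol(N)$, with presentation $\langle r,s\mid r^{p^2}=s^p=1,\,srs^{-1}=r^{1+2p}\rangle$ and isomorphic to the modular maximal-cyclic $p$-group $M_{p^3}$. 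In particular every subgroup of order $p^2$ in $\Hol(N)$ is contained in $P$.

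Next, I would count cyclic subgroups of order $p^2$ in $P$ using the congruence $(1+2p)^b\equiv 1+2pb\pmod{2p^2}$, valid because $(2p)^k$ is divisible by $2p^2$ whenever $k\ge 2$. Expanding directly gives $(r^as^b)^p=r^{aT_b}$ with $T_b=\sum_{k=0}^{p-1}(1+2p)^{bk}\equiv p\pmod{p^2}$, so $r^as^b$ has order $p^2$ exactly when $p\nmid a$. This yields $p^2(p-1)$ elements of order $p^2$ in $P$, and dividing by the $p(p-1)$ generators of each cyclic group of order $p^2$ gives exactly $p$ cyclic subgroups. Writing $r=n^2$ and $s=\sigma$, these are $H_\infty=\langle n^2\rangle$ and $H_i=\langle(n^{2i},\sigma)\rangle$ for $i=1,\dots,p-1$, which are precisely the subgroups of order $p^2$ isomorphic to $\Syl_p(N)=C_{p^2}$; the remaining subgroup of order $p^2$ in $P$ is the elementary abelian $\langle r^p,s\rangle\cong C_p\times C_p$, which lies outside the scope of the lemma.

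Finally, I would verify semiregularity. An element $(n^c,\sigma^b)$ of $\Hol(N)$ fixes some $x\in N$ iff $(1-(1+2p)^b)\,x\equiv c\pmod{2p^2}$, which by the same congruence becomes $2pbx\equiv -c\pmod{2p^2}$; this is solvable iff $p\mid c$ (when $p\nmid b$), or $c\equiv 0\pmod{2p^2}$ (when $p\mid b$). Writing the $k$-th power of $(n^{2i},\sigma)$ as $(n^{2iT_k},\sigma^k)$: for $k\not\equiv 0\pmod p$ one has $T_k\equiv k\pmod p$, so $p\nmid 2iT_k$ and the element has no fixed point; for $k=pm$ with $1\le m\le p-1$ the element is the non-trivial translation by $n^{2ipm}\neq 1$ (since $T_{pm}\equiv pm\pmod{p^2}$ and $p\nmid im$), again without fixed point. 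The case $H_\infty$ is immediate as its non-identity elements are non-trivial translations. The main obstacle is this last step, but once the key congruence $(1+2p)^b\equiv 1+2pb\pmod{2p^2}$ is established, the verification is direct arithmetic.
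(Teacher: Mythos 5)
Your argument is correct, and its arithmetic core is the same as the paper's: both proofs count the elements of order $p^2$ in $\Hol(C_{2p^2})$ --- the paper via $(x,\varphi)^p=(x^{1+i+\cdots+i^{p-1}},\Id)$ with the exponent exactly divisible by $p$, you via the equivalent congruence $T_b\equiv p\pmod{p^2}$ --- obtaining $p^2(p-1)$ such elements, and then divide by the $p(p-1)$ generators of a $C_{p^2}$. Where you genuinely differ is the packaging. You first isolate the unique Sylow $p$-subgroup $P=\langle n^2\rangle\rtimes\langle\sigma\rangle\cong M_{p^3}$ (a step the paper skips), which buys you an explicit list $H_\infty,H_1,\dots,H_{p-1}$ of the cyclic subgroups and, more importantly, the observation that $P$ also contains the elementary abelian subgroup $\langle n^{2p}\rangle\times\langle\sigma\rangle$ of order $p^2$. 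That observation is worth keeping: taken literally, the lemma (and the paper's proof, which counts only elements of order $p^2$ and hence only cyclic subgroups) overlooks this subgroup, so there are in fact $p+1$ subgroups of order $p^2$, and the extra one is neither cyclic nor fixed-point free, since $\sigma$ fixes $\langle n^p\rangle$ pointwise. The statement must therefore be read, as you do and as the paper itself does later when it invokes ``precisely $p$ cyclic subgroups of order $p^2$'', as counting the subgroups isomorphic to $\Syl_p(C_{2p^2})=C_{p^2}$; with that reading nothing downstream is affected. You also carry out the fixed-point-freeness verification element by element, where the paper only says ``clearly''. Two small repairs: for $p\nmid b$ the solvability condition for a fixed point of $(n^c,\sigma^b)$ is $2p\mid c$, not $p\mid c$ (the congruence is modulo $2p^2$) --- harmless, since you only use the direction ``$p\nmid c$ implies no fixed point''; and you should add the one-line check that $H_\infty,H_1,\dots,H_{p-1}$ are pairwise distinct (e.g.\ $H_i\cap\langle n^2\rangle=\langle n^{2p}\rangle$, so $H_i=H_j$ would put $(n^{2(i-j)},\Id)$ in $\langle n^{2p}\rangle$ and force $p\mid i-j$), so that by your count the list is exhaustive and your fixed-point verification covers all $p$ subgroups.
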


\begin{proof}
If an element $(x,\varphi) \in \Hol(C_{2p^2})=C_{2p^2}\rtimes \Aut(C_{2p^2})$ has order $p^2$, then $\varphi^{p} = \Id$, since $\Aut(C_{2p^2})\simeq (\Z/2p^2\Z)^*$ is cyclic of order $p(p-1)$. Now, if $\varphi$ has order $p$, $\varphi(x)=x^i$, for $i$ of order $p$ modulo $p^2$. We have then

$$(x,\varphi)^{p}=(x\varphi(x)\dots \varphi^{p-1}(x),\varphi^{p})=(x^{1+i+\dots+i^{p-1}},\Id).$$

\noindent Now $1+i+\dots+i^{p-1}=\dfrac{1-i^{p}}{1-i}$ is divisible by $p$ but not by $p^2$. Indeed, we have $i\equiv \lambda p +1 \pmod{p^2}$, for some $\lambda$ with $1\leq \lambda \leq p-1$ and $i^p-1 \equiv \lambda p^2 \pmod{p^3}$. Hence $(x,\varphi)$ has order $p^2$. We have then that $(x,\varphi)$ has order $p^2$ if and only if $\varphi^p=\Id$ and $x$ has order $p^2$. Since $C_{2p^2}$ has $p(p-1)$ elements of order $p^2$ and $\Aut(C_{2p^2})$ has $p-1$ elements of order $p$, we obtain $p^2(p-1)$ elements of order $p^2$ in $\Hol(C_{2p^2})$. Since a group of order $p^2$ has $p(p-1)$ generators, we obtain that $\Hol(C_{2p^2})$ has exactly $p$ subgroups of order $p^2$, generated by $(x,\varphi)$, with $x$ of order $p^2$ and $\varphi^p=\Id$. Clearly, all of them act on $C_{2p^2}$ without fixed points.
\end{proof}

\begin{proposition}\label{prop1} The regular subgroups of $\Hol(C_{2p^2})$ are precisely $p$ regular subgroups isomorphic to $C_{2p^2}$ and $1$ regular subgroup isomorphic to $D_{2p^2}$.
\end{proposition}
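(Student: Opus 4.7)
The plan is to leverage Lemma~\ref{le11}: any regular subgroup $G$ of $\Hol(C_{2p^2})$ has order $2p^2$, hence a unique (normal) Sylow $p$-subgroup $P$ which must be one of the $p$ cyclic semiregular subgroups supplied by Lemma~\ref{le11}. Writing $G = P\langle\tau\rangle$ for any involution $\tau\in G$, the task reduces to enumerating the admissible $\tau \in \Hol(C_{2p^2})$ for each choice of $P$.

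First I would classify the involutions of $\Hol(C_{2p^2})$. Since $\Aut(C_{2p^2})\simeq(\Z/2p^2\Z)^*$ is cyclic of order $p(p-1)$, it has a unique involution $\varphi_0\colon y\mapsto y^{-1}$; solving $(x,\varphi)^2=(x\varphi(x),\varphi^2)=(1,\Id)$ then shows that the involutions of $\Hol(C_{2p^2})$ are the central element $(y^{p^2},\Id)$ together with the $2p^2$ elements $(x,\varphi_0)$, $x\in C_{2p^2}$. Next, I would note that each $P=\langle(x_0,\varphi)\rangle$ from Lemma~\ref{le11} has $x_0\in\langle y^2\rangle$ (as $x_0$ has order $p^2$) and $\varphi$ preserves $\langle y^2\rangle$; hence all $p$ Sylows share the same pair of orbits $\langle y^2\rangle$ and $y\langle y^2\rangle$ on $C_{2p^2}$. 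Regularity of $\langle P,\tau\rangle$ is then equivalent to $\tau$ normalizing $P$ and sending $1$ to an odd power of $y$.

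I would then analyse the two families of involutions. The central element $(y^{p^2},\Id)$ sends $1$ to $y^{p^2}$ (odd, since $p^2$ is odd) and commutes with every $P$, so it yields $p$ distinct regular subgroups $P\times\langle(y^{p^2},\Id)\rangle\simeq C_{p^2}\times C_2\simeq C_{2p^2}$, one per $P$. For $\tau=(x,\varphi_0)$ with $x$ an odd power of $y$, a direct computation using $\varphi_0\varphi\varphi_0=\varphi$ and $\varphi_0(z)=z^{-1}$ gives
\[
\tau(x_0,\varphi)\tau^{-1} = \bigl(x\,x_0^{-1}\,\varphi(x)^{-1},\;\varphi\bigr).
\]
When $\varphi=\Id$ this simplifies to $(x_0^{-1},\Id)$, so every admissible $\tau$ normalizes and inverts $P_0:=\langle(x_0,\Id)\rangle$, producing $\langle P_0,\tau\rangle\simeq D_{2p^2}$. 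Two admissible $\tau,\tau'=(x',\varphi_0)$ satisfy $\tau^{-1}\tau'=(x(x')^{-1},\Id)\in P_0$ because $x(x')^{-1}$ is an even power of $y$, so they lie in a common $P_0$-coset and generate the same subgroup; the $p^2$ admissible $\tau$ therefore all yield a single regular subgroup of type $D_{2p^2}$.

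The main obstacle is ruling out $(x,\varphi_0)$ when $\varphi$ has order $p$. Writing $x=y^a$, $\varphi(y)=y^i$ with $i\equiv 1\pmod p$, and $x_0=y^{2m}$ with $\gcd(m,p)=1$, the elements of $P$ with automorphism part $\varphi$ have first coordinates lying in the coset $x_0\langle y^{2p}\rangle$; the condition $\tau(x_0,\varphi)\tau^{-1}\in P$ thus reads $a(1-i)\equiv 4m\pmod{2p}$. Reducing modulo $p$ (where $1-i\equiv 0$) yields $0\equiv 4m\pmod p$, contradicting $\gcd(m,p)=1$. Hence no $(x,\varphi_0)$ normalizes a Sylow with $\varphi\ne\Id$, and summing the contributions gives exactly $p$ regular subgroups of type $C_{2p^2}$ and $1$ of type $D_{2p^2}$, as claimed.
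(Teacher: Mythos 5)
Your proof is correct and follows essentially the same route as the paper: both reduce the problem, via Lemma~\ref{le11}, to attaching an involution of $\Hol(C_{2p^2})$ to one of the $p$ cyclic semiregular subgroups of order $p^2$ and checking normalization and transitivity, using that $\Aut(C_{2p^2})$ has a unique involution. Your write-up is somewhat more explicit than the paper's (notably the congruence $a(1-i)\equiv 4m \pmod{2p}$ ruling out an involution with nontrivial automorphism part normalizing a Sylow subgroup with $\varphi\neq\Id$), but the underlying argument is the same.
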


\begin{proof} We look for elements of order 2 in $\Hol(C_{2p^2})$ which normalize the semiregular subgroups found in Lemma \ref{le11}.
If an element $(z,\chi) \in \Hol(C_{2p^2})$ has order $2$, then $\chi^{2} = \Id$. The only element $\chi$ of order 2 in $\Aut(C_{2p^2})$ satisfies $\chi(x)=x^{-1}$.
Now $(z,\chi)(x,\varphi)(z,\chi)=(z\chi(x)(\chi\varphi)(z),\varphi)$.

We have $\langle (x,\varphi),(z,\chi) \rangle \simeq C_{2p^2}$ if and only if $z\chi(x)(\chi\varphi)(z)=x$. This implies $\chi=\Id$ and $z$ of order 2. We obtain then $p$ subgroups of $\Hol(C_{2p^2})$ isomorphic to $C_{2p^2}$ of the form $\langle(x,\varphi)\rangle$, with $\varphi^p=\Id$ and $x$ a generator of $C_{2p^2}$, which are clearly regular.

In order to have $\langle (x,\varphi),(z,\chi) \rangle \simeq D_{2p^2}$, we need $\varphi=\Id$ and $z\chi(x)\chi(z)=x^{-1}$. We have then $\chi(x)=x^{-1}$ and we obtain a unique regular subgroup of $\Hol(C_{2p^2})$ isomorphic to $D_{2p^2}$, namely $\langle (a^2,\Id),(a,\chi) \rangle$, for $a$ a generator of $C_{2p^2}$, $\chi$ the element of order 2 in $\Aut(C_{2p^2})$.
\end{proof}

\subsection{Regular subgroups of $\Hol(D_{2p^2})$}\label{dihedral}

Let us write $D_{2p^2}=\langle r,s|r^{p^2}=s^2=1,srs=r^{-1} \rangle$.

\begin{lemma} Let us consider the automorphisms $\varphi_1$ and $\varphi_2$ of $D_{2p^2}$ defined by

$$\begin{array}{llll} \varphi_1:& r & \mapsto & r \\ & s &\mapsto & rs \end{array}, \quad \begin{array}{llll} \varphi_2:& r & \mapsto & r^{p+1} \\ & s &\mapsto & s \end{array}.$$

\noindent The elements $r, \varphi_1, \varphi_2$ generate the only $p$-Sylow subgroup of $\Hol(D_{2p^2})$.
\end{lemma}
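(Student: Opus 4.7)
The plan is to proceed in three steps: (a) determine the order of a Sylow $p$-subgroup of $\Hol(D_{2p^2})$, (b) show that $r$, $\varphi_1$, $\varphi_2$ generate a subgroup of that order, and (c) prove this Sylow subgroup is unique.

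Step (a) is immediate from the data in Section \ref{groups}: $|\Hol(D_{2p^2})| = |D_{2p^2}| \cdot |\Aut(D_{2p^2})| = 2p^2 \cdot p^3(p-1) = 2p^5(p-1)$, so a Sylow $p$-subgroup has order $p^5$.

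For step (b), I would first check that $\varphi_1$ has order $p^2$ (iterating gives $\varphi_1^k(s) = r^k s$), that $\varphi_2$ has order $p$ (from $(1+p)^p \equiv 1 \pmod{p^2}$ and $(1+p) \not\equiv 1 \pmod{p^2}$), and that $\langle \varphi_1, \varphi_2 \rangle$ equals the Sylow $p$-subgroup $P_A$ of $\Aut(D_{2p^2})$. Indeed, the classification in Section \ref{groups} shows $P_A$ is the set of $p^3$ automorphisms of the form $r \mapsto r^{1+kp}, s \mapsto r^j s$; a brief conjugation computation yields $\varphi_2 \varphi_1 \varphi_2^{-1} = \varphi_1^{1+p} \in \langle \varphi_1 \rangle$, so $\langle \varphi_1 \rangle$ is normal in $\langle \varphi_1, \varphi_2 \rangle$, and since $\varphi_2 \notin \langle \varphi_1 \rangle$ (every $\varphi_1^k$ fixes $r$) this gives $|\langle \varphi_1, \varphi_2 \rangle| = p^3 = |P_A|$. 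Because $\varphi_1, \varphi_2$ both send $r$ to a power of $r$, the cyclic subgroup $\langle r \rangle$ is normalized by $\langle \varphi_1, \varphi_2 \rangle$, and within the semidirect product $\Hol(D_{2p^2}) = D_{2p^2} \rtimes \Aut(D_{2p^2})$ these two factors intersect trivially. Hence $\langle r, \varphi_1, \varphi_2 \rangle = \langle r \rangle \rtimes P_A$ has order $p^2 \cdot p^3 = p^5$ and is a Sylow $p$-subgroup.

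The main obstacle is step (c), since the Sylow count only forces the number of Sylow $p$-subgroups to divide $2(p-1)$ and be $\equiv 1 \pmod p$, which still permits the value $p+1$ when $p = 3$. To bypass this I would observe that $\langle r \rangle$ is the unique subgroup of order $p^2$ in $D_{2p^2}$ (every element outside it has order dividing $2$), hence characteristic in $D_{2p^2}$ and therefore normal in $\Hol(D_{2p^2})$. A normal $p$-subgroup lies in every Sylow $p$-subgroup, so uniqueness in $\Hol$ reduces to uniqueness in the quotient $\Hol/\langle r \rangle$. Since every automorphism sends $s$ to $r^j s$, the induced action of $\Aut(D_{2p^2})$ on $D_{2p^2}/\langle r \rangle \cong C_2$ is trivial, giving $\Hol/\langle r \rangle \cong C_2 \times \Aut(D_{2p^2})$, whose Sylow $p$-subgroup is $\{1\} \times P_A$. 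This is normal because $P_A$ is normal in $\Aut(D_{2p^2}) \cong C_{p^2} \rtimes (\Z/p^2\Z)^*$: for odd $p$, the group $(\Z/p^2\Z)^*$ is cyclic of order $p(p-1)$, so its $p$-part is characteristic, making $P_A = C_{p^2} \rtimes C_p$ normal. Pulling back yields the unique Sylow $p$-subgroup of $\Hol(D_{2p^2})$, which by step (b) is exactly $\langle r, \varphi_1, \varphi_2 \rangle$.
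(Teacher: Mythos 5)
Your proposal is correct and follows essentially the same route as the paper: verify the orders of $\varphi_1,\varphi_2$ and the relation $\varphi_2\varphi_1\varphi_2^{-1}=\varphi_1^{p+1}$, conclude that $\langle\varphi_1,\varphi_2\rangle$ is a $p$-Sylow subgroup of $\Aut(D_{2p^2})$, prove it is the unique one, and then transfer uniqueness to $\Hol(D_{2p^2})$ using that $\langle r\rangle$ is characteristic in $D_{2p^2}$. The only differences are in bookkeeping: the paper gets uniqueness in $\Aut(D_{2p^2})$ directly from $|\Aut(D_{2p^2})|=p^3(p-1)$ (Sylow counting with index $p-1$) and states the passage to $\Hol(D_{2p^2})$ in one line, whereas you obtain normality of the Sylow subgroup from the structure $\Aut(D_{2p^2})\simeq C_{p^2}\rtimes(\Z/p^2\Z)^*$ and make the transfer explicit via the quotient $\Hol(D_{2p^2})/\langle r\rangle\simeq C_2\times\Aut(D_{2p^2})$, which also correctly explains why naive Sylow counting in $\Hol(D_{2p^2})$ alone would not suffice at $p=3$.
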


\begin{proof} We check that $\varphi_1$ has order $p^2$, $\varphi_2$ has order $p$ and $\varphi_2\varphi_1=\varphi_1^{p+1} \varphi_2$, hence $\langle \varphi_1,\varphi_2 \rangle$ is a $p$-Sylow subgroup of $\Aut(D_{2p^2})$ isomorphic to $G_p$, the non-abelian group of order $p^3$ having exponent $p^2$. Moreover, since $|\Aut(D_{2p^2})|=p^3(p-1)$ this $p$-Sylow subgroup is unique. Now $\langle r \rangle \rtimes \langle \varphi_1, \varphi_2 \rangle$ is the only $p$-Sylow subgroup of $\Hol(D_{2p^2})$.
\end{proof}

\begin{lemma} $\Hol(D_{2p^2})$ has $p^3-p^2$ semiregular cyclic subgroups of order $p^2$, namely

$$\langle(r,\varphi_1^j\varphi_2^k)\rangle, \quad 0\leq j <p^2, \, j\not \equiv -1 \, (\mo p), \, 0\leq k <p.$$
\end{lemma}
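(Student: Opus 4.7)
The plan is to reduce the search to the unique Sylow $p$-subgroup $P = \langle r \rangle \rtimes \langle \varphi_1, \varphi_2\rangle$ exhibited in the preceding lemma: any cyclic subgroup of order $p^2$ of $\Hol(D_{2p^2})$ consists of $p$-elements and hence lies inside the (normal, because unique) Sylow $p$-subgroup $P$, and each element of $P$ can be written uniquely as a triple $(r^i, \varphi_1^j \varphi_2^k)$ with $0 \leq i, j < p^2$ and $0 \leq k < p$.

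The second step is a structural observation that pins down the shape of any generator. Since $\langle r\rangle$ is characteristic in $D_{2p^2}$ (being its unique Sylow $p$-subgroup), every $\varphi \in \Aut(D_{2p^2})$ preserves both $\langle r\rangle$ and its complement $\langle r\rangle s$; hence so does every element of $P$. A semiregular $H \leq P$ of order $p^2$ must have all orbits of size $p^2$, and the partition $D_{2p^2} = \langle r\rangle \sqcup \langle r\rangle s$ forces these orbits to be exactly $\langle r\rangle$ and $\langle r\rangle s$. So $H$ acts regularly on $\langle r\rangle$, and since $(r^i,\varphi)$ sends $1$ to $r^i$, there is a unique $(r, \psi) \in H$ with $\psi \in \langle \varphi_1, \varphi_2\rangle$. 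A short order computation (using $(1+kp)^p \equiv 1 \pmod{p^2}$) shows every such $(r, \psi)$ has order exactly $p^2$, so it already generates $H$.

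It then remains to decide, for $\psi = \varphi_1^j \varphi_2^k$, when $\langle (r, \psi)\rangle$ is semiregular. Since this is a cyclic $p^2$-group, that is equivalent to both $(r,\psi)$ and $(r,\psi)^p$ being fixed-point-free. With $\psi(r) = r^{1+kp}$ and $\psi(s) = r^j s$, the fixed-point equation on $n = r^l$ reduces to $lkp \equiv -1 \pmod{p^2}$, which never holds (its left-hand side is a multiple of $p$); on $n = r^l s$ it becomes $lkp \equiv -(1+j) \pmod{p^2}$, which admits a solution exactly when $p \mid (1+j)$. The key computation is then $(r, \psi)^p = (r^p, \varphi_1^{jp})$, and the same case analysis applied to this element yields the same constraint $j \not\equiv -1 \pmod{p}$.

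Finally, counting the pairs $(j,k)$ with $0 \leq j < p^2$, $j \not\equiv -1 \pmod{p}$, and $0 \leq k < p$ gives $(p^2 - p)p = p^3 - p^2$ subgroups, pairwise distinct by the uniqueness of the generator of the form $(r, \psi)$. The main (but routine) obstacle will be the modular-arithmetic bookkeeping in computing $(r, \psi)^p$ and extracting the condition on $j$.
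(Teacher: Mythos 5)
Your proof is correct and arrives at the same normal form and the same criterion $p\nmid j+1$ as the paper, but by a somewhat different route. The paper proceeds by counting: it first counts the $p^5-p^3$ elements of order $p^2$ in the unique Sylow $p$-subgroup, deduces that there are $p^3+p^2$ cyclic subgroups of order $p^2$, discards those generated by $(r^i,\varphi_1^j\varphi_2^k)$ with $p\mid i$ because their orbits are too small, and then tests semiregularity of the remaining $p^3$ subgroups $\langle(r,\varphi_1^j\varphi_2^k)\rangle$ by checking that the orbit of $1$ is all of $\langle r\rangle$ and that the orbit of $s$ has $p^2$ elements exactly when $p\nmid j+1$. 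You instead exploit the invariant partition $D_{2p^2}=\langle r\rangle\sqcup\langle r\rangle s$: a semiregular subgroup of order $p^2$ must act regularly on $\langle r\rangle$, hence contains a unique element of the form $(r,\psi)$, which you show always has order $p^2$ and therefore generates; semiregularity of the cyclic group is then reduced to fixed-point-freeness of $(r,\psi)^p=(r^p,\varphi_1^{jp})$, giving $p\nmid j+1$. Your route makes fully explicit two points the paper handles only implicitly (why every relevant subgroup admits a generator whose first coordinate is exactly $r$, and why distinct parameters $(j,k)$ give distinct subgroups), at the cost of not recovering the paper's intermediate count of all $p^3+p^2$ cyclic subgroups of order $p^2$.

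One small inaccuracy, which does not affect the conclusion: for the generator itself, the fixed-point equation on $\langle r\rangle s$, namely $lkp\equiv-(1+j)\pmod{p^2}$, is solvable precisely when $p\mid 1+j$ only if $k\not\equiv 0\pmod p$; when $k=0$ it requires $p^2\mid 1+j$, so your ``exactly when'' is not quite right. Since your semiregularity criterion is in fact governed by $(r,\psi)^p$ alone (its fixed-point condition is exactly $p\mid 1+j$, and its fixed-point-freeness already forces that of every nontrivial power, because $(r,\psi)^p$ lies in every nontrivial subgroup of the cyclic group), the final criterion $j\not\equiv-1\pmod p$ and the count $(p^2-p)p=p^3-p^2$ stand as written.
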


\vspace{0.2cm}
\begin{proof}
An element $(r^i,\varphi_1^j\varphi_2^k)$ of $\Hol(D_{2p^2})$ has order $p^2$ if and only if $p\nmid i$ or $p\nmid j$. We have then $p^5-p^3$ elements of order $p^2$ in $\Hol(D_{2p^2})$ and hence $p^3+p^2$ subgroups of order $p^2$. If $p\mid i$, then the subgroup $\langle (r^i,\varphi_1^j\varphi_2^k)\rangle$ is not semiregular since the orbit of $r$ under its action has at most $p$ elements. We are then left with the $p^3$ subgroups of the form $\langle(r,\varphi_1^j\varphi_2^k)\rangle$, $0\leq j <p^2, 0\leq k <p$.  Under the action of one of them, the orbit of 1 contains all powers of $r$ and the orbit of $s$ contains $p^2$ elements if and only if $p\nmid j+1$.
\end{proof}

\begin{proposition}\label{prop2} The regular subgroups of $\Hol(D_{2p^2})$ are precisely $2p^3$  regular subgroups isomorphic to $C_{2p^2}$ and $2$ regular subgroups isomorphic to $D_{2p^2}$.
\end{proposition}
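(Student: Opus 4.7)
The plan is to follow the template of Proposition \ref{prop1}. For each of the $p^3 - p^2$ semiregular cyclic subgroups $H = \langle (r, \psi) \rangle$ with $\psi = \varphi_1^j \varphi_2^k$ produced by the previous lemma, I look for involutions $(z, \chi) \in \Hol(D_{2p^2})$ such that $\langle H, (z,\chi) \rangle$ is a regular subgroup of order $2p^2$. Since $H \cong C_{p^2}$ is cyclic, the enlarged subgroup has cyclic $p$-Sylow, hence is isomorphic to $C_{2p^2}$ (if $(z,\chi)$ centralises $(r,\psi)$) or to $D_{2p^2}$ (if $(z,\chi)$ conjugates $(r,\psi)$ to its inverse). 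Moreover, the two $H$-orbits on $D_{2p^2}$ are $\langle r \rangle$ and $\langle r \rangle s$, so regularity forces $z = r^m s$ for some $0 \leq m < p^2$.

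I would first classify the candidate involutions. The automorphisms of $D_{2p^2}$ of order at most $2$ are $\Id$ and the $p^2$ elements $\varphi_c : r \mapsto r^{-1},\ s \mapsto r^c s$; for $z = r^m s$, the equation $(z,\chi)^2 = (1,\Id)$ is automatic when $\chi = \Id$ and reduces to $c \equiv 2m \pmod{p^2}$ when $\chi = \varphi_c$. The centraliser condition $(z,\chi)(r,\psi) = (r,\psi)(z,\chi)$ splits into $\chi\psi = \psi\chi$ and $z \chi(r) = r \psi(z)$, which I would rewrite as explicit congruences in $(j, k, m, c)$ and solve case by case.

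The case analysis on $j, k$ modulo $p$ yields the following tally of commuting involutions. When $k = 0$, solutions exist iff $j = 0$ or $j \equiv -2 \pmod{p^2}$, and each case contributes $p^2$ involutions (parametrised by $m$, with $\chi = \varphi_{2m}$ in the first case and $\chi = \Id$ in the second). When $k \not\equiv 0 \pmod p$, solutions exist iff $j \equiv 0 \pmod p$ or $j \equiv -2 \pmod p$; in each such residue class, $m$ is determined modulo $p$, contributing $p$ involutions for each of the $p(p-1)$ admissible pairs $(j, k)$. Summing gives $2p^2 + 2 \cdot p(p-1) \cdot p = 2p^3$ commuting involutions. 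Since each regular $C_{2p^2}$ has a unique involution and a unique cyclic $p$-Sylow, this is exactly the number of regular subgroups isomorphic to $C_{2p^2}$.

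For the inversion condition $\chi\psi\chi^{-1} = \psi^{-1}$, I would observe that its restriction to $r$ forces $1 + kp \equiv 1 - kp \pmod{p^2}$, hence $k = 0$; once $k = 0$, the condition on $s$ holds automatically for every $c$ and every $j$. Combining the remaining equation $z\chi(r) \cdot \chi\psi(z) = \psi^{-1}(r^{-1})$ with $c \equiv 2m \pmod{p^2}$ pins down exactly two families of solutions: $(\chi = \Id, j = 0)$ and $(\chi = \varphi_{2m}, j \equiv -2 \pmod{p^2})$, each contributing $p^2$ inverting involutions. Since every regular subgroup of type $D_{2p^2}$ has a unique cyclic $p$-Sylow and contains $p^2$ involutions, all of which satisfy the inversion relation, the $2p^2$ inverting involutions assemble into $2p^2/p^2 = 2$ regular subgroups. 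The main obstacle is the bookkeeping: correctly separating the exceptional values $j \equiv 0, -2 \pmod{p^2}$ from the generic residue classes $j \equiv 0, -2 \pmod{p}$, and summing the contributions of all $H$'s without double-counting.
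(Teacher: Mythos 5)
Your proposal is correct and follows essentially the same route as the paper: take the semiregular cyclic subgroups $\langle(r,\varphi_1^j\varphi_2^k)\rangle$ from the preceding lemma, classify the admissible involutions as $(r^m s,\Id)$ and $(r^m s,\varphi_{2m})$, and run the same commutation/inversion case analysis, arriving at the same totals $2p^3$ and $2$. The only genuine difference is the final bookkeeping — you count (Sylow, involution) pairs and divide by the number of such pairs per subgroup ($1$ in the cyclic case, $p^2$ in the dihedral case), whereas the paper enumerates the subgroups directly and identifies coincidences explicitly (e.g.\ via $(r,\varphi_1^{-2})(r^ks,\chi_{2k})=(r^{k-1}s,\chi_{2(k-1)})$); one small imprecision is your claim that once $k=0$ the condition on $s$ holds automatically for every $j$, which is true for $\chi=\varphi_c$ but for $\chi=\Id$ forces $j=0$ — your final two families $(\chi=\Id,\,j=0)$ and $(\chi=\varphi_{2m},\,j\equiv-2)$ are nevertheless the correct ones.
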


\begin{proof}
We look first for elements $(z,\chi)$ of order 2. If $\chi=\Id$, we may take $z=r^k s, 0\leq k \leq p^2-1$. If $\chi$ has order 2, then $\chi=\chi_l$ defined by $\chi_l(r)=r^{-1}, \chi_l(s)=r^l s$, for some $l$ with $0\leq l\leq p^2-1$. For $z=r^ks$, we have $z\chi(z)=r^k sr^{-k} r^ls=r^{2k-l}$. Hence $(r^ks,\chi_l)$ has order 2 if $l=2k$.

We now determine when $(z,\chi)$ of order 2 normalizes $\langle (r,\varphi)\rangle$. Let us write $\varphi=\varphi_1^i \varphi_2^j$.
We have

$$(r^k s,\Id)(r,\varphi)(r^ks,\Id)=(r^k sr\varphi(r^ks),\varphi)=(r^{-kjp-i-1},\varphi).$$

\noindent
If $\varphi=\Id$, we obtain $\langle (r^ks,\Id),(r,\Id)  \rangle=\langle r,s \rangle \simeq D_{2p^2}$. If $\varphi$ has order $p$, then $p\mid i$ and $-kjp-i-1 \not \equiv 1 \pmod{p}$, hence $(r^k s,\Id)$ does not normalize $\langle (r,\varphi) \rangle$. If $\varphi$ has order $p^2$, then $(r^k s,\Id)$  normalizes $\langle (r,\varphi) \rangle$ for $i=-2-kjp$ and $\langle (r^ks, \Id),(r,\varphi) \rangle \simeq C_{2p^2}$. We obtain then $p^3$ transitive subgroups of $\Hol(D_{2p^2})$ isomorphic to $C_{2p^2}$.

If $\chi$ has order 2, then $\chi=\chi_{2k}$ defined by $\chi_{2k}(r)=r^{-1}, \chi_{2k}(s)=r^{2k} s$ and $z=r^ks$. We have

$$(r^ks,\chi_{2k})(r,\varphi)(r^ks,\chi_{2k})=(r^ksr^{-1}\chi_{2k}(\varphi(r^ks)),\chi_{2k}\varphi\chi_{2k}).$$

\noindent
Now $\chi_{2k}\varphi\chi_{2k}(r)=r^{jp+1}=\varphi(r)$ and $\chi_{2k}\varphi\chi_{2k}(s)=r^{-2kjp-i}s$. If $\varphi=\Id$, then we get $r^ksr^{-1}\chi_{2k}(\varphi(r^ks))=r$. We obtain then $p^2$ transitive subgroups of $\Hol(D_{2p^2})$ isomorphic to $C_{2p^2}$, namely $\langle (r,\Id),(r^ks,\chi_{2k})\rangle$, $0\leq k \leq p^2-1$.

If $\varphi$ has order $p$, then $p\mid i$. In this case, $\chi_{2k}\varphi\chi_{2k}=\varphi$ when $i=-kjp$ and then $r^ksr^{-1}\chi_{2k}(\varphi(r^ks))=r$. We obtain then $p^3-p^2$ transitive subgroups of $\Hol(D_{2p^2})$ isomorphic to $C_{2p^2}$, namely $\langle (r^ks,\chi_{2k}),(r,\varphi) \rangle$, with $\varphi=\varphi_1^{-kjp} \varphi_2^j$, $0\leq k \leq p^2-1, 1\leq j \leq p-1$.

Finally, if $\varphi$ has order $p^2$, then $p\nmid i$ and $j$ must be zero. We have then $\chi_{2k} \varphi \chi_{2k}=\varphi^{-1}$ and $r^ksr^{-1}\chi_{2k}(\varphi(r^ks))=r^{1+i}$. Hence $(r^ks,\chi_{2k})$ normalizes $(r,\varphi)$ for $i=-2$ and we have $\langle (r,\varphi_1^{-2}),(r^ks,\chi_{2k})\rangle \simeq D_{2p^2}$. Now $(r,\varphi_1^{-2})(r^ks,\chi_{2k})=(r^{k-1}s,\chi_{2(k-1)})$, hence these groups coincide for all $k$ and we obtain 1 transitive subgroup of $\Hol(D_{2p^2})$ isomorphic to $D_{2p^2}$.
\end{proof}

\subsection{Regular subgroups of $\Hol(C_p\times C_{2p})$}\label{abelian}

Let us write $C_p\times C_p \times C_2=\langle a \rangle \times \langle b \rangle \times \langle c \rangle$.

\begin{lemma}
$\Hol(C_p\times C_{2p})$ has  $p^2$ semiregular subgroups isomorphic to $C_p\times C_p$.
\end{lemma}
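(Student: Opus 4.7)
The plan is to classify the order‑$p^2$ subgroups $H$ of $\Hol(N)$ with $H\simeq C_p\times C_p$ via the projection $\pi\colon\Hol(N)=N\rtimes\Aut(N)\to\Aut(N)$, and then impose semiregularity. Since $c$ is the unique element of order $2$ in $N$, every automorphism fixes $c$, so $\Aut(N)\simeq\GL(V)$ with $V=\langle a,b\rangle\simeq\F_p^2$. The $p$-part of $|\GL(2,p)|=p(p-1)^2(p+1)$ equals $p$, so $\pi(H)$ has order $1$ or $p$. In the first case $H\subset N$, and the only subgroup of $N$ of order $p^2$ is $V$ itself (the unique Sylow $p$-subgroup); translation by $V$ splits $N$ into the two orbits $V$ and $V+c$, so the corresponding subgroup $V\times\{\mathrm{Id}\}$ of $\Hol(N)$ is one semiregular candidate.

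Assume now $|\pi(H)|=p$, so $\pi(H)$ is one of the $p+1$ Sylow $p$-subgroups of $\GL(2,p)$; fix one such $P=\langle\tilde P\rangle$. The kernel $K:=H\cap N$ has order $p$, lies in $V$ (since any nontrivial element of $p$-power order in $N$ has trivial $\langle c\rangle$-component), and is centralised by $H$, hence fixed pointwise by $\tilde P$; as $\tilde P$ is a nontrivial unipotent on $V$ it admits a unique fixed line, forcing $K=V^P$. A lift of $\tilde P$ to $H$ takes the form $(m,\tilde P)$, and writing $\tilde P=I+E$ with $E^2=0$ I would compute
\[(m,\tilde P)^p=\Bigl(\sum_{i=0}^{p-1}\tilde P^i(m),\,I\Bigr),\qquad \sum_{i=0}^{p-1}\tilde P^i=pI+\tbinom{p}{2}E,\]
which vanishes on $V$ while acting as the identity on $\langle c\rangle$ (because $p$ is odd), so the lift has order $p$ iff $m\in V$. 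Two such lifts $(m,\tilde P)$ and $(m',\tilde P)$ generate together with $K$ the same subgroup iff $m\equiv m'\pmod K$, so the subgroups of $\Hol(N)$ with image $P$ are parametrised by cosets $m+K\in V/K$, producing $p(p+1)$ candidate subgroups in this case.

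It remains to check which candidates are semiregular. A general element of $H$ projecting to $\tilde P^y$ takes the form $\bigl(xk_0+\sum_{i=0}^{y-1}\tilde P^i(m),\,\tilde P^y\bigr)$ (with $k_0$ a generator of $K$ and $x\in\F_p$), and it fixes a point of $N$ iff its $V$-part lies in $\mathrm{Im}(I-\tilde P^y)$. Since $I-\tilde P^y=-yE$ has image equal to $\mathrm{Im}(E)=V^P=K$ for every nonzero $y$, and since $\sum_{i=0}^{y-1}\tilde P^i(m)=ym+\tbinom{y}{2}E(m)\equiv ym\pmod K$, the existence of a fixed point for any such element reduces to the single condition $m\in K$. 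Hence exactly the zero coset fails semiregularity, leaving $(p+1)(p-1)=p^2-1$ semiregular subgroups in the second case; together with $V$ from the first case this yields the claimed $p^2$, and distinctness across all cases is automatic since the subgroups are separated either by $\pi(H)$ or by the coset $m+K$. The only nontrivial step is the semiregularity check, and the crucial observation that makes it clean is $\mathrm{Im}(\tilde P^y-I)=K$ uniformly in $y\ne 0$, which makes the fixed-point condition on every non-identity element of $H$ collapse to the same linear condition on $m$.
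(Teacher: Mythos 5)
Your proof is correct, and it reaches the same count $(p+1)(p-1)+1=p^2$ as the paper, but it is organized rather differently. The paper exhibits a concrete Sylow $p$-subgroup $\langle a,b,\varphi\rangle$ of $\Hol(N)$, identifies it with the Heisenberg group $H_p$, computes its normalizer to show there are $p+1$ Sylow $p$-subgroups, lists the $p+1$ maximal subgroups $A=\langle a,b\rangle$ and $B_i=\langle(a^i,\varphi),b\rangle$ of one Sylow, and checks semiregularity by inspecting orbits (only $B_0$ fails); the overlap is handled by noting that $A$ is normal, hence common to all Sylows. You instead classify the candidates via the projection $\pi\colon\Hol(N)\to\Aut(N)\simeq\GL(2,p)$: the image is trivial (giving $V$) or one of the $p+1$ unipotent subgroups $P$, in which case $H=\langle V^P,(m,\tilde P)\rangle$ is pinned down by the coset $m+V^P\in V/V^P$, and semiregularity collapses to the single linear condition $m\notin V^P$ via $\mathrm{Im}(I-\tilde P^y)=\mathrm{Im}(E)=V^P$ for all $y\ne 0$. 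Your route avoids the Heisenberg identification and the normalizer/Sylow count inside $\Hol(N)$, makes the distinctness of the subgroups and the exceptional non-semiregular case transparent (it is exactly the zero coset, matching the paper's $B_0=\langle(1,\varphi),b\rangle$), and the order computation $(m,\tilde P)^p=(\sum\tilde P^i(m),\Id)$ correctly forces $m\in V$ using that $p$ is odd. The two arguments are in bijective agreement — your cosets $m+V^P$ for a fixed $P$ are the paper's $B_0,\dots,B_{p-1}$ — so this is a legitimate, somewhat more linear-algebraic rendering of the same underlying structure.
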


\begin{proof}
 We consider the element $\varphi$ in $\Aut (C_p\times C_{2p})$  given by $a \mapsto ab, b\mapsto b, c \mapsto c$. Then $\varphi$ has order $p$ and, in $\Hol(C_p\times C_{2p})$, we have the relation $(1,\varphi)(a,\Id)=(b,\Id)(a,\Id)(1,\varphi)$ whereas $(1,\varphi)$ and $(b,\Id)$ commute with each other. Hence $a,b,\varphi$ generate a $p$-Sylow subgroup $\Syl_p$ of $\Hol(C_p\times C_{2p})$ isomorphic to the Heisenberg group $H_p$. The normalizer of $\Syl_p$ in $\Hol(C_p\times C_{2p})$ consists in the elements $(z,\psi)$ such that $\psi$ normalizes $\langle \varphi \rangle$ in $\Aut(C_p\times C_{2p}) \simeq \GL(2,p)$, hence has index $p+1$ in $\Hol(C_p\times C_{2p})$. Now $\Syl_p$ has $p+1$ subgroups isomorphic to $C_p\times C_p$, namely

\begin{equation}\label{ABi}
A:=\langle a,b \rangle \text{\ and \ }B_i:=\langle (a^i,\varphi),b\rangle, 0\leq i \leq p-1,
\end{equation}

\noindent
which are semiregular, except for $\langle (1,\varphi),b\rangle$. Since $\langle a,b \rangle$ is normal in $\Hol(C_p\times C_{2p})$, we obtain $(p-1)(p+1)+1=p^2$ semiregular subgroups of $\Hol(C_p\times C_{2p})$ isomorphic to $C_p\times C_p$.
\end{proof}

\begin{proposition}\label{prop3} The regular subgroups of $\Hol(C_p\times C_{2p})$ are precisely $p^2$ regular subgroups isomorphic to $C_p\times C_{2p}$, $p^3+p^2$ regular subgroups isomorphic to $C_p\times D_{2p}$ and 1 regular subgroup isomorphic to $(C_p\times C_p) \rtimes C_2$.
\end{proposition}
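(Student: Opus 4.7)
The approach is, for each semiregular $C_p\times C_p$-subgroup $N_0$ of $\Hol(C_p\times C_{2p})$ provided by the previous lemma, to enumerate the involutions of $\Hol(C_p\times C_{2p})$ that normalize $N_0$ and together with $N_0$ generate a regular subgroup of order $2p^2$, classifying by isomorphism type. An element $(z,\chi)\in\Hol(C_p\times C_{2p})$ has order $2$ iff $\chi^2=\Id$ and $z\,\chi(z)=1$. Since $\chi$ fixes the unique order-$2$ element $c$, the involutive $\chi$ restrict on $\langle a,b\rangle\simeq\F_p^2$ to $\Id$, $-\Id$, or to one of the $p(p+1)$ reflections. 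For each admissible $\chi$ the allowed $z$'s are $a^\alpha b^\beta c^\gamma$ with $a^\alpha b^\beta$ in the $(-1)$-eigenspace of $\chi|_{\langle a,b\rangle}$, and regularity forces $\gamma=1$ since the $N_0$-orbit of $1$ lies in $\langle a,b\rangle$. The isomorphism type of $\langle N_0,(z,\chi)\rangle$ is determined by how $(z,\chi)$ acts on $N_0$ by conjugation: trivially yields $C_p\times C_{2p}$, $-\Id$ yields $(C_p\times C_p)\rtimes C_2$, and a nontrivial reflection yields $C_p\times D_{2p}$. Two normalizing involutions produce the same regular subgroup iff they lie in the same right $N_0$-coset.

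For the normal case $N_0=A=\langle a,b\rangle$: every involution normalizes $A$. The case $\chi=\Id$ gives only $(c,\Id)$, yielding one subgroup of type $C_p\times C_{2p}$. The case $\chi=-\Id$ gives $p^2$ admissible involutions, all lying in a single $A$-coset, yielding one subgroup of type $(C_p\times C_p)\rtimes C_2$. For each of the $p(p+1)$ reflections $\chi$, the $p$ admissible involutions form a single $A$-coset, and distinct reflections give distinct subgroups, producing $p^2+p$ subgroups of type $C_p\times D_{2p}$.

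For the non-normal case, I first observe that the diagonal elements $\psi=\bigl(\begin{smallmatrix}\lambda&0\\0&\nu\end{smallmatrix}\bigr)\in\GL(2,p)$ send $B_i$ to $B_{\lambda^2 i/\nu}$, acting transitively on $\{B_i:1\le i\le p-1\}$; combined with the $\Hol$-conjugacy of Sylow subgroups, this makes all $p^2-1$ non-normal semiregular $N_0$'s $\Hol$-conjugate, so the analysis reduces to $N_0=B_1$. Since $B_1\cap(C_p\times C_{2p})=\langle b\rangle$ is preserved by any conjugation normalizing $B_1$, such a $\chi$ must satisfy $\chi(b)=b^{\pm1}$; the requirement that $(a,\varphi)\in B_1$ be sent back into $B_1$ then forces $\chi\varphi\chi^{-1}=\varphi^{\pm1}$. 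Writing $\chi=\bigl(\begin{smallmatrix}\lambda&0\\\mu&\nu\end{smallmatrix}\bigr)$ in the basis $a,b$ with $\lambda,\nu\in\{\pm1\}$, a four-case analysis shows: (i) $\chi=\Id$ gives only $(c,\Id)$, which centralizes $B_1$ and yields one subgroup of type $C_p\times C_{2p}$; (ii) $\chi=-\Id$ does not normalize $B_1$, because the conjugate of $(a,\varphi)$ has $a$-exponent $1\ne-1$; (iii) the type $(\lambda,\nu)=(1,-1)$ admits no normalizing involution, since the involution constraint forces the $a$-exponent of $z$ to be $0$ while normalization of $B_1$ forces it to be $-1$; and (iv) the type $(\lambda,\nu)=(-1,1)$ with $\mu\in\F_p$ free yields $p^2$ involutions $(a^\alpha b^{-\mu\alpha/2}c,\chi_\mu)$, each acting on $B_1$ as a reflection fixing $\langle b\rangle$, hence producing an extension of type $C_p\times D_{2p}$. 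These $p^2$ involutions, indexed by $(\alpha,\mu)\in\F_p\times\F_p$, partition into right $B_1$-cosets according to the value of $\alpha+\mu\pmod p$, giving $p$ distinct subgroups of type $C_p\times D_{2p}$. No normalizing involution acts as $-\Id$ on $B_1$, so no subgroup of type $(C_p\times C_p)\rtimes C_2$ arises from a non-normal $N_0$.

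Summing the contributions gives $1+(p^2-1)=p^2$ subgroups of type $C_p\times C_{2p}$, $(p^2+p)+p(p^2-1)=p^3+p^2$ of type $C_p\times D_{2p}$, and $1$ of type $(C_p\times C_p)\rtimes C_2$, as claimed. The main obstacle is the non-normal case: ruling out involutions that act as $-\Id$ on $B_1$ (which excludes type $(C_p\times C_p)\rtimes C_2$) and carrying out the right $B_1$-coset bookkeeping that collapses the $p^2$ reflection-type involutions of subcase (iv) to exactly $p$ distinct regular subgroups of type $C_p\times D_{2p}$ per non-normal $N_0$.
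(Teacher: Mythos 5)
Your proof is correct and follows essentially the same strategy as the paper: for each of the $p^2$ semiregular $C_p\times C_p$ subgroups ($A$ and the $B_i$), enumerate the involutions $(z,\chi)$ normalizing it and classify the resulting regular subgroup by whether $\chi$ acts on it trivially, as $-\Id$, or as a reflection. The only organizational difference is that you reduce the non-normal case to the single representative $B_1$ by an explicit $\Hol$-conjugacy and do the coset bookkeeping there, whereas the paper treats all $B_i$ at once and multiplies by the number of conjugates at the end; the counts agree.
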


\begin{proof}
We look for elements of order 2 normalizing $A$ or $B_i$ in (\ref{ABi}). If an element $(z,\chi) \in \Hol(C_{2p^2})$ has order $2$, then $\chi^{2} = \Id$. If $\chi=\Id$, then $z=c$, the only element of order 2 in $C_p \times C_{2p}$, and $(c,\Id)$ centralizes both $A$ and $B_i$. We obtain then $p^2$ regular subgroups of $\Hol(C_p\times C_{2p})$ isomorphic to $C_p\times C_{2p}$.

If $\chi$ has order 2, then $z\chi(z)=1$. We have

$$\begin{array}{l} (z,\chi)(a,\Id)(z,\chi)=(z\chi(a)\chi(z),\Id)=(\chi(a),\Id)\\
 (z,\chi)(b,\Id)(z,\chi)=(z\chi(b)\chi(z),\Id)=(\chi(b),\Id).
  \end{array}
  $$

\noindent Hence $(z,\chi)$ normalizes $A$. We obtain the regular subgroup $\langle a,b,(c,\chi) \rangle$ of $\Hol(C_p\times C_{2p})$ isomorphic to $(C_p\times C_p) \rtimes C_2$, when $\chi(a)=a^{-1}, \chi(b)=b^{-1}$, and $p(p+1)=p^2+p$ regular subgroups of $\Hol(C_p\times C_{2p})$ isomorphic to $C_p\times D_{2p}$, conjugated to the subgroup $\langle a,b,(c,\chi) \rangle$, where $\chi$ is defined by $\chi(a)=a^{-1}, \chi(b)=b$.

Now

$$(z,\chi)(a^i,\varphi)(z,\chi)=(z\chi(a)^i\chi\varphi(z),\chi\varphi\chi).$$

\noindent So that $(z,\chi)$ normalizes $B_i$, we must have $\chi(b) \in \langle b \rangle$ and $\chi\varphi\chi \in \langle \varphi \rangle$.

We have three possibilities for $\chi$, namely $\chi_1$ defined by  $\chi_1(a)=a^{-1}$ and $\chi_1(b)=b^{-1}$; $\chi_{2,j}$ defined by $\chi_{2,j}(a)=a^{-1}b^j$ and $\chi_{2,j}(b)=b$; $\chi_{3,j}$ defined by $\chi_{3,j}(a)=ab^j$ and $\chi_{3,j}(b)=b^{-1}$. Then $\chi\varphi\chi=\varphi$ in the first case and $\chi\varphi\chi=\varphi^{-1}$ in the other two. In order to obtain a regular subgroup, we take $z=a^kc$. Now $z\chi_{2,j}(z)=1$ implies $jk=0$ and $z\chi_{3,j}(z)=1$ implies $k=0$. We further impose  $(z,\chi)(a^i,\varphi)(z,\chi)=(z\chi(a)^i\chi\varphi(z),\chi\varphi\chi)$ to lie in $B_i$. We obtain the regular subgroups $B_i \rtimes \langle (a^kc,\chi_{2,j})\rangle$ isomorphic to $C_p\times D_{2p}$. Taking into account $\varphi \chi_{2,j}=\chi_{2,j-1}$, these groups may be written as $B_i \rtimes \langle (a^k c, \chi_{2,0})\rangle, 0<i\leq p-1, 0\leq k \leq p-1$. Together with their conjugates, these are $(p-1)p(p+1)=p^3-p$ subgroups, so we get in all $p^3+p^2$ regular subgroups of $\Hol(C_p\times C_{2p})$ isomorphic to $C_p\times D_{2p}$, as wanted.
\end{proof}

\subsection{Regular subgroups of $\Hol(C_p\times D_{2p})$}\label{cidi}

We write $C_p\times D_{2p}=\langle r,s,c \rangle$ as in Section \ref{groups}.

\begin{lemma} Let $\varphi$ denote the automorphism  of $C_p\times D_{2p}$ given by $r \mapsto r, s \mapsto  rs, c \mapsto c$. The $p^2-p$ subgroups

\begin{equation}\label{Akl} A_{k,l}:=\langle (r,\varphi^k),(c,\varphi^l) \rangle, \quad 0\leq k\leq p-2, \, 0\leq l\leq p-1,
\end{equation}

\noindent
isomorphic to $C_p\times C_p$, are precisely the semiregular subgroups of $\Hol(C_p\times D_{2p})$ of order $p^2$.
\end{lemma}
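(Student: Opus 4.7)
The plan is to identify a Sylow $p$-subgroup $P$ of $\Hol(C_p\times D_{2p})$, show that every subgroup of order $p^2$ lies in $P$, and then classify the semiregular 2-dimensional $\F_p$-subspaces of $P$ by excluding a small set of ``bad'' lines; the subgroups $A_{k,l}$ will then appear as an explicit parametrization of the survivors.

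First I would check that $(r,\Id)$, $(c,\Id)$ and $(1,\varphi)$ pairwise commute in $\Hol(C_p\times D_{2p})$, since $\varphi$ fixes both $r$ and $c$. Thus $P:=\langle (r,\Id),(c,\Id),(1,\varphi)\rangle\simeq C_p^3$ is an elementary abelian Sylow $p$-subgroup of order $p^3$ (using $|\Aut(C_p\times D_{2p})|=p(p-1)^2$). I would next show $P$ is the \emph{unique} $p$-Sylow: from the description of $\Aut(C_p\times D_{2p})$ recalled in Section \ref{groups}, a direct computation yields $\sigma\varphi\sigma^{-1}=\varphi^i$ for $\sigma(r)=r^i$, so $\langle\varphi\rangle$ is normal in $\Aut(C_p\times D_{2p})$; combined with the fact that $\langle r,c\rangle$ is characteristic in $C_p\times D_{2p}$ and hence normal in $\Hol(C_p\times D_{2p})$, this forces $P$ to be normal and therefore unique. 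Every subgroup of order $p^2$ then sits inside $P$ and is isomorphic to $C_p\times C_p$.

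Second, I would identify the non-semiregular elements of $P$. Writing a generic element as $h=(c^b r^a,\varphi^n)$ and acting on $g=c^\beta r^\alpha s^\epsilon$, the identity $\varphi^n(s)=r^n s$ gives $h\cdot g=c^{b+\beta}r^{a+\alpha+n\epsilon}s^\epsilon$, so $h$ has a fixed point iff $b=0$ and $a\in\{0,-n\}$. The non-identity fixed-point elements therefore form exactly $(L_1\cup L_2)\setminus\{1\}$, where $L_1:=\langle(1,\varphi)\rangle$ and $L_2:=\langle(r^{-1},\varphi)\rangle$, and a 2-subspace $H\le P$ is semiregular iff it contains neither $L_1$ nor $L_2$. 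Since $P\simeq\F_p^3$ has $p^2+p+1$ 2-subspaces, of which $p+1$ contain each $L_i$ and only $L_1+L_2$ contains both, inclusion-exclusion yields $(p^2+p+1)-(2p+1)=p^2-p$ semiregular 2-subspaces.

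It then remains to verify that the $A_{k,l}$ realize exactly these. In the basis $(r,c,\varphi)$ of $P\simeq\F_p^3$, the subspace $A_{k,l}$ has coordinates $\{(\alpha,\beta,\alpha k+\beta l):\alpha,\beta\in\F_p\}$; it never contains $L_1$, and it contains $L_2$ precisely when $k\equiv -1\pmod p$, so the range $0\le k\le p-2$ captures exactly semiregularity. Since $(k,l)$ is recovered from $A_{k,l}$ as the $\varphi$-coordinates of the canonical basis vectors, distinct pairs $(k,l)$ produce distinct subspaces, giving $p(p-1)=p^2-p$ in total and matching the count above. The main obstacle I anticipate is the uniqueness of $P$ as Sylow $p$-subgroup: without it one would have to chase $p^2$-subgroups across conjugate Sylows, whereas with it everything reduces to elementary linear algebra over $\F_p$. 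The uniqueness itself is short but hinges on the normality of $\langle\varphi\rangle$ in $\Aut(C_p\times D_{2p})$, which is the one computation that cannot be bypassed.
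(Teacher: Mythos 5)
Your proposal is correct and follows essentially the same route as the paper: both identify $\langle r,c,\varphi\rangle\simeq C_p^3$ as the unique (normal) Sylow $p$-subgroup of $\Hol(C_p\times D_{2p})$, reduce the problem to its $p^2+p+1$ subgroups of order $p^2$, and then single out the semiregular ones. The only difference is one of bookkeeping: the paper checks semiregularity by computing the orbits of $1$ and of $s$ directly for the subgroups in normal form $A_{k,l}$, whereas you compute the fixed-point locus $L_1\cup L_2$ and count admissible $2$-subspaces by inclusion--exclusion, arriving at the same $p^2-p$ subgroups.
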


\begin{proof}
 The automorphism $\varphi$ has order $p$ and commutes with $r$ and $c$ in $\Hol(C_p\times D_{2p})$. Hence $r, c$ and $\varphi$ generate a $p$-Sylow subgroup $\Syl_p$ of $\Hol(C_p\times D_{2p})$ isomorphic to $C_p\times C_p \times C_p$. Moreover $\Syl_p$ is normal in $\Hol(C_p\times D_{2p})$, hence it is the unique $p$-Sylow subgroup. We obtain that $\Hol(C_p\times D_{2p})$ has $p^2+p+1$ subgroups isomorphic to $C_p\times C_p$. Now, for $x,y \in \langle r,c\rangle$, the orbit of 1 under the action of the subgroup $\langle (x,\varphi^k),(y,\varphi^l) \rangle$ has $p^2$ elements when $\langle x,y\rangle=\langle r,c\rangle$ and the subgroup may be written as $A_{k,l}:=\langle (r,\varphi^k),(c,\varphi^l) \rangle$. The orbit of $s$ has $p^2$ elements if and only if $p\nmid k+1$.  We obtain then that $\Hol(C_p\times D_{2p})$ has $p^2-p$ subgroups isomorphic to $C_p\times C_p$ and semiregular, which may be written as $A_{k,l}:=\langle (r,\varphi^k),(c,\varphi^l) \rangle$, with $0\leq k\leq p-2, 0\leq l\leq p-1$.
\end{proof}

\begin{proposition}\label{prop4} The regular subgroups of $\Hol(C_p\times D_{2p})$ are precisely $2p$ regular subgroups isomorphic to $C_p\times C_{2p}$, $2p^2+2p$ regular subgroups isomorphic to $C_p\times D_{2p}$ and $2$ regular subgroups isomorphic to $(C_p\times C_{p})\rtimes C_2$.
\end{proposition}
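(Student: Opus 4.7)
The plan is to adjoin, to each semiregular subgroup $A_{k,l}$ identified in the preceding lemma, an involution $(z,\chi)$ of $\Hol(C_p\times D_{2p})$ that normalizes it and satisfies $z\notin\langle r,c\rangle$; this last condition ensures that the resulting group of order $2p^2$ acts transitively, hence regularly, on $N$. Since $p$ is odd, every regular subgroup of order $2p^2$ has a unique Sylow $p$-subgroup, so $A_{k,l}$ is recovered from the regular subgroup, and different involutions lying in the same non-identity coset of $A_{k,l}$ produce the same regular subgroup.

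Using $(z,\chi)^2=(z\chi(z),\chi^2)$, the involutions $(z,\chi)$ are parametrized by a square root $\chi$ of $\Id$ in $\Aut(N)$ together with a $z$ satisfying $\chi(z)=z^{-1}$. Writing a general automorphism as $r\mapsto r^i$, $s\mapsto r^j s$, $c\mapsto c^h$, the condition $\chi^2=\Id$ forces $i,h\in\{\pm1\}$ and $j(i+1)\equiv 0\pmod p$, leaving four families: the identity; the map $\chi_b$ with $(i,j,h)=(1,0,-1)$; a family $\chi_c^{(j)}$ with $(i,j,h)=(-1,j,1)$; and a family $\chi_d^{(j)}$ with $(i,j,h)=(-1,j,-1)$. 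Imposing $\chi(z)=z^{-1}$ together with $z\notin\langle r,c\rangle$ restricts the relevant $z$'s to, respectively, $r^i s$, $c^a r^i s$, $r^{j/2}s$, and $c^a r^{j/2}s$. I would then compute the conjugates $(z,\chi)(r,\varphi^k)(z,\chi)$ and $(z,\chi)(c,\varphi^l)(z,\chi)$, using the identity $\chi\varphi\chi^{-1}=\varphi^{\pm 1}$ (with $+$ when $\chi$ fixes $r$, $-$ otherwise) to pin down the $\Aut(N)$-components, and match the $N$-components with elements of $A_{k,l}$. The $r$-conjugate always produces the congruence $k(k+2)\equiv 0\pmod p$, forcing $k=0$ or $k=-2$, while the $c$-conjugate imposes $kl\equiv 0\pmod p$ for $\chi\in\{\Id,\chi_d^{(j)}\}$ and $l(k+2)\equiv 0\pmod p$ for $\chi\in\{\chi_b,\chi_c^{(j)}\}$.

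To classify the resulting regular subgroup I would read off the $2\times 2$ matrix by which $(z,\chi)$ acts by conjugation on $A_{k,l}\cong C_p\times C_p$ in the basis $\{(r,\varphi^k),(c,\varphi^l)\}$: the identity matrix gives the abelian type $C_p\times C_{2p}$; the matrix $-\Id$ gives $(C_p\times C_p)\rtimes C_2$; any matrix with eigenvalues $\{+1,-1\}$ gives $C_p\times D_{2p}$. A case-by-case enumeration over the four $\chi$-families, each restricted to its admissible $(k,l)$ regime, produces $2p$, $2p^3+2p^2$, and $2p^2$ valid pairs $(A_{k,l},(z,\chi))$ of the three types respectively. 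Since a regular subgroup of type $C_p\times C_{2p}$, $C_p\times D_{2p}$, or $(C_p\times C_p)\rtimes C_2$ contains $1$, $p$, or $p^2$ involutions outside its Sylow $p$-subgroup, dividing yields the claimed counts $2p$, $2p^2+2p$, and $2$. The main subtlety is the bookkeeping across the four $\chi$-families crossed with the two admissible values of $k$; in particular, for $A_{0,0}$ both $\chi=\Id$ and $\chi=\chi_b$ yield normalizing involutions, and one must verify that they produce non-isomorphic regular subgroups (of types $C_p\times D_{2p}$ and $(C_p\times C_p)\rtimes C_2$ respectively), so that they are counted separately.
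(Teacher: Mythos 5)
Your proposal is correct and follows essentially the same route as the paper: both classify the order-two elements $(z,\chi)$ of $\Hol(C_p\times D_{2p})$ into the same four families of involutive automorphisms, determine which semiregular subgroups $A_{k,l}$ each normalizes (arriving at the same congruences $k(k+2)\equiv 0$ and $kl\equiv 0$ or $l(k+2)\equiv 0$), identify the isomorphism type from the conjugation action, and divide out by the number of involutions per regular subgroup. Your uniform bookkeeping via the eigenvalues of the $2\times 2$ conjugation matrix and the final division by $1$, $p$, $p^2$ is a slightly cleaner organization of the case-by-case counts the paper performs, but the underlying argument and the resulting tallies ($2p$, $2p^3+2p^2$, $2p^2$ pairs, hence $2p$, $2p^2+2p$, $2$ subgroups) coincide.
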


\begin{proof}
We look for elements $(z,\chi)$ of order 2 normalizing $A_{k,l}$ in (\ref{Akl}) and generating together a transitive group. If $\chi=\Id$, then $z=r^is$ and $(r^is,\Id)$ normalizes  $A_{p-2,0}$ and $A_{0,l}$. We obtain then the $p$ subgroups $A_{p-2,0}\times \langle(r^is,\Id)\rangle$, $0\leq i \leq p-1$, isomorphic to $C_p\times C_{2p}$ and the $p$ subgroups $A_{0,l}\rtimes \langle (s,\Id)\rangle$, $0\leq l \leq p-1$, isomorphic to $C_p\times D_{2p}$.

If $\chi$ has order 2, we must have $z\chi(z)=1$. We have three possibilities, namely $(r^is,\chi_{1,i})$, with $\chi_{1,i}(r)=r^{-1}, \chi_{1,i}(c)=c, \chi_{1,i}(s)=r^{2i} s, 1\leq i \leq p-1$; $(r^ic^js,\chi_{2,i})$, with $\chi_{2,i}(r)=r^{-1}, \chi_{2,i}(c)=c^{-1}, \chi_{2,i}(s)=r^{2i} s$, $0\leq i,j \leq p-1$; $(r^ic^js,\chi_3)$, with $\chi_3(r)=r, \chi_3(c)=c^{-1}, \chi_3(s)=s$, $0\leq i,j \leq p-1$. We obtain

$$(r^is,\chi_{1,i})(r,\varphi^k)(r^is,\chi_{1,i})=(r^{k+1},\varphi^{-k}), \quad (r^is,\chi_{1,i})(c,\varphi^l)(r^is,\chi_{1,i})=(cr^{l},\varphi^{-l}).$$

\noindent Then $(r^is,\chi_{1,i})$ normalizes $A_{p-2,l}$ and $A_{p-2,l} \rtimes \langle (r^is,\chi_{1,i})\rangle$ is isomorphic to $C_p\times D_{2p}$, $0\leq i,l \leq p-1$. Since $(r,\varphi^{-2})(r^is,\chi_{1,i})=(r^{i-1}s,\chi_{1,i-1})$, each of these groups contains $p$ elements of the form $(r^is,\chi_{1,i})$ and we have then $p$ regular subgroups isomorphic to $C_p\times D_{2p}$, which may be written as $\langle (r,\varphi^{-2}), (c,\varphi^l), (s,\chi_{1,0})\rangle, 0\leq l \leq p-1$. Also $(r^is,\chi_{1,i})$ normalizes $A_{0,0}$ and $A_{0,0} \times \langle (r^is,\chi_{1,i})\rangle$ is isomorphic to $C_p\times C_{2p}$, $0\leq i\leq p-1$. Now

$$(r^ic^js,\chi_{2,i})(r,\varphi^k)(r^ic^js,\chi_{2,i})=(r^{k+1},\varphi^{-k}), \quad (r^ic^js,\chi_{2,i})(c,\varphi^l)(r^ic^js,\chi_{2,i})=(c^{-1}r^{l},\varphi^{-l}).$$

\noindent The element $(r^{k+1},\varphi^{-k})$ belongs to $A_{k,l}$ for $k=0$ and $k=p-2$. In the first case, $(c^{-1}r^{l},\varphi^{-l})$ belongs to $A_{0,l}$ for any $l$ and in the second case it belongs to $A_{p-2,l}$ only for $l=0$. Then $(r^ic^js,\chi_{2,i})$  normalizes $A_{0,l}$ and $A_{0,l} \rtimes \langle (c^js,\chi_{2,i})\rangle$ is isomorphic to $C_p\times D_{2p}$. This amounts to $p^2$ regular subgroups isomorphic to $C_p\times D_{2p}$ since each of these groups contains $p$ elements of the form $(c^js,\chi_{2,i})$. We write them $\langle r,(c,\varphi^l),(s,\chi_{2,i})\rangle, 0\leq i,l \leq p-1$. Moreover $(r^ic^js,\chi_{2,i})$  normalizes $A_{p-2,0}$ and $A_{p-2,0} \rtimes \langle (r^is,\chi_{2,i})\rangle$ is isomorphic to $(C_p\times C_{p})\rtimes C_2$, $0\leq i \leq p-1$. In fact these groups are equal for all values of $i$, since $(r,\varphi^{-2})=(r^{i-1},\varphi^{-2}\chi_{2,i})$ and $\varphi^{-2}\chi_{2,i}=\chi_{2,i-1}$. We obtain then one transitive subgroup of $\Hol(C_p\times D_{2p})$ isomorphic to $(C_p\times C_{p})\rtimes C_2$. Finally

$$(r^ic^js,\chi_3)(r,\varphi^k)(r^ic^js,\chi_3)=(r^{-k-1},\varphi^{k}), \quad (r^ic^js,\chi_3)(c,\varphi^l)(r^ic^js,\chi_3)=(c^{-1}r^{-l},\varphi^{l}).$$

\noindent Then $(r^ic^js,\chi_3)$ normalizes $A_{p-2,l}$  and $A_{p-2,l} \rtimes \langle (r^ic^js,\chi_3)\rangle$ is isomorphic to $C_p\times D_{2p}$. Now $A_{p-2,l}=\langle (cr^{l/2},\Id),(r,\varphi^{-2}) \rangle$, so we have $p^2$ different subgroups, namely,

$$\langle (cr^{l/2},\Id),(r,\varphi^{-2}),(r^is,\chi_3) \rangle, 0\leq i,l\leq p-1.$$

\noindent
Moreover $(r^ic^js,\chi_3)$ normalizes $A_{0,0}$  and $\langle r,c\rangle \rtimes \langle (s,\chi_3)\rangle$ is isomorphic to $(C_p\times C_p) \rtimes C_2$.
\end{proof}

\subsection{Regular subgroups of $\Hol((C_p\times C_{p})\rtimes C_2)$}\label{semi}

Let us write $(C_p\times C_{p})\rtimes C_2=(\langle a \rangle \times \langle b \rangle)\rtimes \langle c \rangle$ as in Section \ref{groups}.

\begin{lemma}\label{le51} Let us consider the automorphisms $\varphi_1, \varphi_2, \varphi_3$ of $(C_p\times C_{p})\rtimes C_2$ given by

$$\begin{array}{cccc}\varphi_1:&a & \mapsto & a \\ &b& \mapsto & ab \\ &c&\mapsto & c \end{array}, \quad \begin{array}{cccc}\varphi_2:&a & \mapsto & a \\ &b& \mapsto & b \\ &c&\mapsto & ac \end{array},\quad \begin{array}{cccc}\varphi_3:&a & \mapsto & a \\ &b& \mapsto & b \\ &c&\mapsto & bc \end{array}.$$

We have
\begin{enumerate}[1)]
\item $\Syl_p:=\langle a,b \rangle \rtimes  \langle \varphi_1, \varphi_3\rangle$ is a $p$-Sylow subgroup of $\Hol((C_p\times C_{p})\rtimes C_2)$.
\item The number of $p$-Sylow subgroups of $\Hol((C_p\times C_{p})\rtimes C_2)$ is $p+1$.
\item The subgroup $F:=\langle a,b,\varphi_2,\varphi_3 \rangle$ is isomorphic to $C_p^4$ and is normal in $\Hol((C_p\times C_{p})\rtimes C_2)$, hence contained in all $p$-Sylow subgroups of $\Hol((C_p\times C_{p})\rtimes C_2)$.
\end{enumerate}
\end{lemma}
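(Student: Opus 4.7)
The plan is to handle the three parts in the order (1), (3), (2), because (2) follows cleanly from (3) by a quotient argument. From Section~\ref{groups}, $|\Hol((C_p\times C_p)\rtimes C_2)|=2p^2\cdot p^3(p+1)(p-1)^2$, so its $p$-part is exactly $p^5$.

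For (1), I would verify directly that $\varphi_1$ has order $p$ (it is a transvection $b\mapsto ab$ fixing $a,c$) and that $\varphi_3$ has order $p$, and a short calculation on the generators $a,b,c$ gives $\varphi_1\varphi_3\varphi_1^{-1}=\varphi_2\varphi_3$, so $\varphi_2=[\varphi_1,\varphi_3]\in\langle\varphi_1,\varphi_3\rangle$. Using the description of $\Aut(G)$ from Section~\ref{groups} as an extension of $\GL(2,p)$ by the translation subgroup $\langle\varphi_2,\varphi_3\rangle\simeq C_p\times C_p$, the subgroup $\langle\varphi_1,\varphi_3\rangle=\langle\varphi_1,\varphi_2,\varphi_3\rangle$ has order $p^3$ (the translations give $p^2$, and $\varphi_1$ is of order $p$ in the $\GL(2,p)$-quotient). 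Since $\varphi_1,\varphi_3$ stabilize $\langle a,b\rangle$, the semidirect product $\langle a,b\rangle\rtimes\langle\varphi_1,\varphi_3\rangle$ has order $p^5$, matching the Sylow bound.

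For (3), I would first observe that the four generators of $F$ commute pairwise: $a$ and $b$ commute in $G$; $\varphi_2,\varphi_3$ fix $a,b$ and both translate $c$ by commuting elements, hence commute with each other and with $a,b$ inside $\Hol(G)$. Each has order $p$, so $F\simeq C_p^4$. For normality, since $\Hol(G)=G\rtimes\Aut(G)$, it suffices to check a generating set. The elements $a,b$ lie in $F$; for $c$ the multiplication rule $n\alpha n^{-1}=(n\alpha(n)^{-1},\alpha)$ in $\Hol(G)$ yields $c\varphi_2 c^{-1}=(a^{-1},\varphi_2)$ and $c\varphi_3 c^{-1}=(b^{-1},\varphi_3)$, both in $F$, while $cac^{-1}=a^{-1}$, $cbc^{-1}=b^{-1}$. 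For a pure matrix automorphism $\psi$ with $\psi(a)=a^ib^j,\psi(b)=a^kb^l,\psi(c)=c$, one computes $\psi\varphi_2\psi^{-1}=\varphi_2^i\varphi_3^j$ and $\psi\varphi_3\psi^{-1}=\varphi_2^k\varphi_3^l$, both in $F$; since $\Aut(G)$ is generated by pure matrix automorphisms together with translations (which already lie in $F$), normality follows.

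For (2), $F$ is a normal $p$-subgroup, so it is contained in every $p$-Sylow of $\Hol(G)$, and the $p$-Sylows of $\Hol(G)$ correspond bijectively to those of $\Hol(G)/F$. Killing $\varphi_2,\varphi_3$ collapses $\Aut(G)$ to $\GL(2,p)$ and killing $a,b$ collapses $G$ to $\langle c\rangle\simeq C_2$; the identity $c\psi c^{-1}=\psi$ for pure matrix $\psi$ (since $\psi(c)=c$) shows these two factors commute in the quotient, giving $\Hol(G)/F\simeq C_2\times\GL(2,p)$. The $p$-Sylow count thus equals that of $\GL(2,p)$, which is $p+1$ (the unipotent Sylow is self-normalizing inside the Borel, of index $p+1$). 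The main obstacle is the normality verification in (3): one must carefully distinguish conjugation inside $\Aut(G)$ from conjugation inside $\Hol(G)$, since conjugating an automorphism by a $G$-element produces a nontrivial $G$-component via the semidirect multiplication, which is exactly what forces the translations $\varphi_2,\varphi_3$ to be included in $F$.
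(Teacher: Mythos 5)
Your argument is correct, and for parts (1) and (3) it runs essentially parallel to the paper's: direct verification that $\varphi_1,\varphi_2,\varphi_3$ have order $p$, that $\varphi_2=[\varphi_1,\varphi_3]$ so $\langle\varphi_1,\varphi_3\rangle$ is a Sylow $p$-subgroup of $\Aut(N)$ of order $p^3$ (the paper identifies it as the Heisenberg group), and that $F$ is elementary abelian of order $p^4$ and normal. In fact your treatment of normality in (3) is slightly more complete than the paper's: the paper only records that $\varphi\varphi_2\varphi^{-1},\varphi\varphi_3\varphi^{-1}\in\langle\varphi_2,\varphi_3\rangle$ for every automorphism $\varphi$, leaving the conjugation by elements of $N$ implicit, whereas you compute $c\varphi_2c^{-1}=(a^{-1},\varphi_2)$ and $c\varphi_3c^{-1}=(b^{-1},\varphi_3)$ explicitly, which is exactly the point that forces the translations $\varphi_2,\varphi_3$ into $F$. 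Where you genuinely diverge is part (2). The paper computes the normalizer of $\Syl_p$ in $\Hol(N)$ head-on: writing a general automorphism $\varphi$ as in its display (with matrix part $\left(\begin{smallmatrix}i&j\\k&l\end{smallmatrix}\right)$), it checks that $\varphi\varphi_1\varphi^{-1}\in\langle\varphi_1,\varphi_2,\varphi_3\rangle$ exactly when $j=0$, so the normalizer is the preimage of the Borel and has index $p+1$. You instead invoke (3) first, note that a normal $p$-subgroup lies in every Sylow $p$-subgroup so that Sylow $p$-subgroups of $\Hol(N)$ biject with those of $\Hol(N)/F\simeq C_2\times\GL(2,p)$, and quote the count $p+1$ for $\GL(2,p)$; the verification that the two factors of the quotient commute (via $c\psi(c)\in\langle a,b\rangle$) is the only extra check needed and you supply it. Your route is more conceptual and avoids the explicit inverse-matrix computation, at the cost of reordering the parts and of not producing the explicit description of the normalizer (the condition $j=0$) that the paper's calculation yields as a by-product; since the rest of the paper only ever uses the count $p+1$, nothing is lost.
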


\begin{proof}

\noindent
1) The automorphisms $\varphi_1, \varphi_2, \varphi_3$ have order $p$ and satisfy $\varphi_1\varphi_3=\varphi_2\varphi_3\varphi_1$ whereas $\varphi_2$ commutes with both $\varphi_1$ and $\varphi_3$. Hence $\langle \varphi_1, \varphi_2, \varphi_3\rangle=\langle \varphi_1, \varphi_3\rangle$ is a $p$-Sylow subgroup of $\Aut((C_p\times C_{p})\rtimes C_2)$ isomorphic to the Heisenberg group $H_p$. Therefore $\Syl_p=\langle a,b \rangle \rtimes  \langle \varphi_1, \varphi_3\rangle$ is a $p$-Sylow subgroup of $\Hol((C_p\times C_{p})\rtimes C_2)$.

\noindent
2) We shall determine the normalizer of $\Syl_p$ in $\Hol((C_p\times C_{p})\rtimes C_2)$. Since $\langle a,b \rangle$ is normal in $\langle a,b,c \rangle$, so that $(x,\varphi)$ normalizes $\Syl_p$, it is enough that $\varphi$ normalizes $\langle \varphi_1, \varphi_3\rangle$. We consider $\varphi$ defined by

\begin{equation}\label{fi}
\varphi(a)= a^ib^j, \varphi(b)= a^k b^l, \varphi(c)= a^mb^n c, \quad 0\leq i,j,k,l,m,n\leq p-1, \, p\nmid il-jk.
\end{equation}

\noindent
Then $\varphi^{-1}$ is given by $a \mapsto a^{i'}b^{j'}, b \mapsto a^{k'} b^{l'}, c \mapsto a^{-(mi'+nk')}b^{-(mj'+nl')} c$, where $\left(\begin{smallmatrix}i'&j'\\k'&l' \end{smallmatrix} \right)=\left(\begin{smallmatrix}i&j\\k&l \end{smallmatrix} \right)^{-1}$ in $\GL(2,p)$. We have $(\varphi\varphi_1\varphi^{-1})(a)=a^{1+ij'}b^{jj'}, (\varphi\varphi_1\varphi^{-1})(b)=a^{il'}b^{1+jl'}$, hence $\varphi\varphi_1\varphi^{-1} \in \langle \varphi_1,\varphi_2,\varphi_3 \rangle$ if and only if $j=j'=0$. Now clearly $\varphi\varphi_3\varphi^{-1} \in \langle \varphi_2, \varphi_3 \rangle$, for any $\varphi$. We obtain then that the normalizer of $\Syl_p$ in $\Hol((C_p\times C_{p})\rtimes C_2)$ has order $p^3(p-1)^2$, hence $\Hol((C_p\times C_{p})\rtimes C_2)$ has $p+1$ $p$-Sylow subgroups.

\noindent
3) Taking into account that $\varphi_2$ commutes with $\varphi_3$ and the action of $\varphi_2$ and $\varphi_3$ on $a$ and $b$, we obtain that the subgroup $F=\langle a,b,\varphi_2,\varphi_3 \rangle$ is isomorphic to $C_p^4$. Moreover, for any $\varphi$ as in (\ref{fi}), we have $\varphi\varphi_2\varphi^{-1}, \varphi\varphi_3\varphi^{-1} \in \langle \varphi_2, \varphi_3 \rangle$, hence $F$ is normal in $\Hol((C_p\times C_{p})\rtimes C_2)$.
\end{proof}

\begin{lemma} With $F$ and $\Syl_p$ as in Lemma \ref{le51}, we have

\begin{enumerate}[1)]
\item The $p(p-1)^2(p+1)$ subgroups

$$\langle (a,\varphi_2^i\varphi_3^j),(b,\varphi_2^k\varphi_3^l)\rangle, \quad 0\leq i,j,k,l\leq p-1, \, p\nmid (i+1)(l+1)-jk$$

\noindent
are precisely the semiregular subgroups of $F$ isomorphic to $C_p \times C_p$.
\item The $p(p-1)^3$ subgroups

$$\langle (a,\varphi_2^{i_2}),(b,\varphi_1^{j_1}\varphi_2^{j_2} \varphi_3^{j_3}) \rangle, \quad 0\leq j_2 \leq p-1, 0\leq i_2, j_3 \leq p-2,1\leq j_1 \leq p-1$$

\noindent
are precisely the semiregular subgroups of $\Syl_p$ isomorphic to $C_p \times C_p$, not contained in $F$.
\item  The number of semiregular subgroups of $\Hol((C_p\times C_p)\rtimes C_2)$ isomorphic to $C_p\times C_p$ is equal to $p^2(p-1)^2(p+1)$.
\end{enumerate}
\end{lemma}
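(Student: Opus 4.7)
The plan is to prove the three parts in order. For \emph{part 1}, I would identify $F \simeq C_p^4$ with an $\F_p$-vector space by assigning coordinates $(\alpha,\beta,i,j)$ to $(a^\alpha b^\beta,\varphi_2^i\varphi_3^j)$. A direct calculation using $\varphi_2(c)=ac$ and $\varphi_3(c)=bc$ shows that such an element fixes $a^s b^t$ iff $(\alpha,\beta)=(0,0)$ and fixes $a^s b^t c$ iff $(\alpha+i,\beta+j)=(0,0)$. Setting $V_1=\{(0,0,i,j)\}$ and $V_2=\{(-i,-j,i,j)\}$, these are complementary 2-dimensional subspaces of $F$, and a 2-dimensional subgroup $H\le F$ is semiregular iff $H\cap V_1=H\cap V_2=0$. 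Such subgroups are precisely the graphs of linear isomorphisms $V_2\to V_1$, whose number is $|\GL(2,\F_p)|=p(p-1)^2(p+1)$. For the explicit form in the statement, $H\cap V_1=0$ picks out unique elements with first coordinates $a$ and $b$, while $H\cap V_2=0$ translates into $(i+1)(l+1)-jk\not\equiv 0\pmod p$.

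For \emph{part 2}, I would use that a semiregular subgroup $H\le \Syl_p$ of order $p^2$ has orbit of $1\in N$ of size $p^2$ contained in $\langle a,b\rangle$; hence the first-coordinate map $H\to \langle a,b\rangle$ is a bijection, so $H$ contains unique elements $u_a=(a,\psi_a)$ and $u_b=(b,\psi_b)$ with $H=\langle u_a,u_b\rangle$. Writing the commutativity $u_au_b=u_bu_a$ as an equality of pairs and using $\psi(b)=a^{j_1(\psi)}b$, where $j_1(\psi)$ denotes the $\varphi_1$-exponent of $\psi$, the first-coordinate equation forces $j_1(\psi_a)=0$, while $\psi_a\psi_b=\psi_b\psi_a$ combined with the Heisenberg relation $\varphi_1\varphi_3=\varphi_2\varphi_3\varphi_1$ forces $j_3(\psi_a)\cdot j_1(\psi_b)\equiv 0\pmod p$. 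Since $H\not\subset F$ gives $j_1(\psi_b)=j_1\neq 0$, we conclude $\psi_a=\varphi_2^{i_2}$ and $\psi_b=\varphi_1^{j_1}\varphi_2^{j_2}\varphi_3^{j_3}$. The remaining task is to translate semiregularity into conditions on $(i_2,j_2,j_3)$, for which I would compute $u_a^\lambda u_b^\mu$ using the Heisenberg identity $(\varphi_1^{j_1}\varphi_3^{j_3})^\mu=\varphi_1^{\mu j_1}\varphi_3^{\mu j_3}\varphi_2^{-\binom{\mu}{2}j_1j_3}$ and examine its fixed points on $N$: the case $\mu=0$, $\lambda\ne 0$ yields the obstruction $i_2+1\not\equiv 0\pmod p$, and the case $\mu\ne 0$ yields $j_3+1\not\equiv 0\pmod p$. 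Counting parameters then gives $p(p-1)^3$ subgroups.

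For \emph{part 3}, I would assemble via Sylow theory. By Lemma \ref{le51}, $F$ is normal in $\Hol((C_p\times C_p)\rtimes C_2)$ and there are $p+1$ Sylow $p$-subgroups, each of order $p^5$ and each containing $F$. Two distinct Sylows therefore intersect in a $p$-subgroup containing $F$ and properly contained in each, hence equal to $F$. Every $C_p\times C_p$ subgroup lies in at least one Sylow; those contained in $F$ lie in all $p+1$ Sylows, while those not in $F$ lie in a unique Sylow. Conjugacy of the Sylows together with the preservation of $F$ and of semiregularity under conjugation implies each Sylow contains $p(p-1)^3$ semiregular $C_p\times C_p$ subgroups outside $F$. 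Combining with part 1 yields
\[
p(p-1)^2(p+1)+(p+1)\cdot p(p-1)^3=p(p-1)^2(p+1)\bigl[1+(p-1)\bigr]=p^2(p-1)^2(p+1).
\]

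The main obstacle is the semiregularity bookkeeping in part 2: one must carry the $\binom{\mu}{2}$ term from $u_b^\mu$ through the action on the coset $a^s b^t c$ and then identify when the resulting linear system in $(s,t)$ has a solution. The algebra is elementary but the computation is lengthy, and care is needed to avoid miscounting the case distinctions $\mu=0$ versus $\mu\ne 0$.
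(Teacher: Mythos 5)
Your proposal is correct and takes essentially the same route as the paper: parametrize the order-$p^2$ subgroups whose orbit of $1$ is $\langle a,b\rangle$ by their unique elements with first coordinates $a$ and $b$, derive the conditions $(i+1)(l+1)-jk\not\equiv 0$, $i_2\neq -1$, $j_3\neq -1$ from the orbit of $c$, and multiply the outside-$F$ count by the $p+1$ Sylow subgroups. Your linear-algebra framing of part 1 (semiregular subgroups as graphs of isomorphisms between the complementary planes $V_1$ and $V_2$) and your explicit justification in part 3 that two distinct Sylow $p$-subgroups intersect exactly in $F$ are tidier than what the paper writes, but they are refinements of the same argument rather than a different one.
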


\begin{proof}

\noindent
1) A subgroup of $F$ isomorphic to $C_p\times C_p$ is generated by two nontrivial elements $(x,\varphi),(y,\psi)$ such that $(y,\psi)\not \in \langle (x,\varphi)\rangle$. If $\langle x,y \rangle=\langle a,b \rangle$, then the orbit of $1$ is the whole group $\langle a,b \rangle$. The subgroups satisfying this condition may be written as

$$\langle (a,\varphi_2^i\varphi_3^j),(b,\varphi_2^k\varphi_3^l)\rangle, 0\leq i,j,k,l\leq p-1.$$

\noindent Now the orbit of $c$ contains $p^2$ elements if and only if $p\nmid (i+1)(l+1)-jk$. We obtain then $(p^2-1)(p^2-p)=p(p-1)^2(p+1)$ semiregular subgroups of $F$ isomorphic to $C_p\times C_p$.

\noindent
2) We look now for semiregular subgroups of $\Syl_p$ isomorphic to $C_p\times C_p$ and not contained in $F$. Such a subgroup is generated by two mutually commuting elements $(a,\varphi_1^{i_1}\varphi_2^{i_2}\varphi_3^{i_3})$ and $(b,\varphi_1^{j_1}\varphi_2^{j_2}\varphi_3^{j_3})$, with $i_1$ and $j_1$ not both zero. Now, since $\varphi_3^{i_3} \varphi_1^{j_1}=\varphi_2^{-j_1i_3} \varphi_1^{j_1} \varphi_3^{i_3}$, we have that $\varphi_1^{i_1}\varphi_2^{i_2} \varphi_3^{i_3}$ commutes with $\varphi_1^{j_1} \varphi_2^{j_2}\varphi_3^{j_3}$ exactly when $i_1j_3\equiv i_3j_1 \pmod{p}$. Under this condition $(a,\varphi_1^{i_1}\varphi_2^{i_2} \varphi_3^{i_3})$ and $(b,\varphi_1^{j_1} \varphi_2^{j_2}\varphi_3^{j_3})$ commute if and only if $a(\varphi_1^{i_1} \varphi_2^{i_2}\varphi_3^{i_3})(b)=b(\varphi_1^{j_1}\varphi_2^{j_2} \varphi_3^{j_3})(a)$, which gives $a^{i_1+1}b=ab$, that is $i_1=0$. We obtain then the subgroups

$$\langle (a,\varphi_2^{i_2}),(b,\varphi_1^{j_1}\varphi_2^{j_2} \varphi_3^{j_3}) \rangle, 0\leq i_2, j_2, j_3 \leq p-1, 1\leq j_1 \leq p-1,$$

\noindent under whose action the orbit of 1 is the whole group $\langle a, b \rangle$. Now the orbit of $c$ contains $p^2$ elements if and only if $i_2 \neq p-1$ and $j_3 \neq p-1$. We obtain then $p(p-1)^3$ semiregular subgroups of $\Syl_p$ isomorphic to $C_p\times C_p$ and not contained in $F$.

\noindent
3) Since the number of $p$-Sylow subgroups of $\Hol((C_p\times C_p)\rtimes C_2)$ is $p+1$, we obtain that the number of semiregular subgroups isomorphic to $C_p\times C_p$ is equal to $p(p-1)^3(p+1)+p(p-1)^2(p+1)=p^2(p-1)^2(p+1)$.
\end{proof}

We look now for elements of order 2 normalizing the subgroups above and generating together a regular subgroup of $\Hol((C_p\times C_p) \rtimes C_2)$. If $(z,\chi)$ has order 2, then $\chi^2=\Id$. If $\chi=\Id$, then $(z,\chi)=(a^mb^nc,\Id), 0\leq m,n \leq p-1$. If $\chi$ has order 2, then either $\chi=\chi_1$ given by

\begin{equation}\label{chi1}
 \chi_1(a)=a^rb^s, \chi_1(b)=a^tb^{-r}, \chi_1(c)=a^u b^v c, \quad \text{with \ }r^2+st=1, \, (r+1)u+tv=0, \, su+(1-r)v=0,
\end{equation}

\noindent
or $\chi=\chi_2$ given by

\begin{equation}\label{chi2}
\chi_2(a)= a^{-1}, \chi_2(b)= b^{-1}, \chi_2(c)=a^u b^v c,
\end{equation}

\noindent
with any $u,v$. Now, $(z,\chi_1)$ has order 2 for $z=a^mb^nc$, with $(1-r)m-tn=u$ and $(1+r)n-sm=v$; $(z,\chi_2)$ has order 2 for $z=a^{u/2}b^{v/2}c$.

\begin{lemma} The regular subgroups of $\Hol((C_p\times C_{p})\rtimes C_2)$, having an element of order 2 of the form $(a^mb^nc,\Id)$, are precisely $p^4$ regular subgroups isomorphic to $C_p\times C_{2p}$, $p^3(p+1)$ regular subgroups isomorphic to $C_p\times D_{2p}$ and $1$ regular subgroup isomorphic to $(C_p\times C_{p})\rtimes C_2$.
\end{lemma}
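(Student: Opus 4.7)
The plan is to enumerate, for each semiregular subgroup $P$ of order $p^2$ produced by the previous lemma, which elements $z=(a^mb^nc,\Id)$ normalize $P$, and then identify the isomorphism type of the resulting regular subgroup $G=P\rtimes\langle z\rangle$. Every $(a^mb^nc,\Id)$ has order $2$ because $c$ inverts $a$ and $b$, so $(a^mb^nc)^2=1$. The conjugation formula $(z,\Id)(x,\varphi)(z,\Id)^{-1}=(zx\varphi(z)^{-1},\varphi)$ preserves the automorphism component, hence conjugation by $(z,\Id)$ induces an $\F_p$-linear involution $\sigma$ on $P\cong C_p\times C_p$. Correspondingly, $G$ has type $C_p\times C_{2p}$, $(C_p\times C_p)\rtimes C_2$, or $C_p\times D_{2p}$ according as $\sigma=\Id$, $\sigma=-\Id$, or $\sigma$ has eigenvalues $\pm 1$.

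For $P\subset F$ of the form $\langle(a,\varphi_2^i\varphi_3^j),(b,\varphi_2^k\varphi_3^l)\rangle$, direct computation yields $(z,\Id)(a,\varphi_2^i\varphi_3^j)(z,\Id)^{-1}=(a^{-1-i}b^{-j},\varphi_2^i\varphi_3^j)$ and $(z,\Id)(b,\varphi_2^k\varphi_3^l)(z,\Id)^{-1}=(a^{-k}b^{-1-l},\varphi_2^k\varphi_3^l)$, independently of $m,n$. Using the parametrization $(a^\alpha b^\beta,\varphi_2^{i\alpha+k\beta}\varphi_3^{j\alpha+l\beta})$ of elements of $P$, normalization amounts to the four congruences $i(i+2)\equiv -jk$, $l(l+2)\equiv -jk$, $j(i+l+2)\equiv 0$, $k(i+l+2)\equiv 0\pmod p$. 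If $i+l+2\equiv 0$, one checks semiregularity is automatic and counts $p^2+p$ subgroups, each with $\sigma$-matrix $\bigl(\begin{smallmatrix}-1-i & -k\\-j & 1+i\end{smallmatrix}\bigr)$ of trace $0$ and determinant $-1$, hence type $C_p\times D_{2p}$; if instead $i\equiv l$ with $i\neq -1$, one is forced to $j=k=0$ and $i\in\{0,-2\}$, giving $\langle(a,\Id),(b,\Id)\rangle$ with $\sigma=-\Id$ (type $(C_p\times C_p)\rtimes C_2$) and $\langle(a,\varphi_2^{-2}),(b,\varphi_3^{-2})\rangle$ with $\sigma=\Id$ (type $C_p\times C_{2p}$).

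For $P\subset\Syl_p\setminus F$, written $\langle(a,\varphi_2^{i_2}),(b,\psi)\rangle$ with $\psi=\varphi_1^{j_1}\varphi_2^{j_2}\varphi_3^{j_3}$ and $j_1\neq 0$, the conjugation on $(b,\psi)$ now depends on $n$ because $\psi(c)=a^{j_2+j_1j_3}b^{j_3}c$. Imposing membership in $P$ via $(b,\psi)^\beta=(a^{j_1\binom{\beta}{2}}b^\beta,\psi^\beta)$ with $\psi^\beta=\varphi_1^{\beta j_1}\varphi_2^{\beta j_2-\binom{\beta}{2}j_1j_3}\varphi_3^{\beta j_3}$ forces $j_3=-2$, and splits into Case A ($i_2=0$, any $j_2$; then $\sigma$ has eigenvalues $\pm 1$ and $G\simeq C_p\times D_{2p}$) and Case B ($i_2=-2$, $j_2=(2-n)j_1$; then $\sigma=\Id$ and $G\simeq C_p\times C_{2p}$). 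No $(C_p\times C_p)\rtimes C_2$ arises here, since $\sigma=-\Id$ on $(b,\psi)$ would require $\psi=\psi^{-1}$, forcing $\psi=\Id$ and $j_1=0$.

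The final step is bookkeeping of when different pairs $(P,z)$ yield the same $G$: since the $p$-Sylow of $G$ is unique and equals $P$, the only ambiguity is in $z$, and the number of elements $(a^{m'}b^{n'}c,\Id)$ contained in $G$ equals the total number of involutions of $G$, namely $1$, $p$, $p^2$ in the three isomorphism types. Summing the contributions of the two families and over the $p+1$ Sylow $p$-subgroups of $\Hol$, with these correction factors, yields $p^2+(p+1)p^2(p-1)=p^4$ subgroups of type $C_p\times C_{2p}$, $p^2(p+1)+(p+1)p^2(p-1)=p^3(p+1)$ of type $C_p\times D_{2p}$, and exactly $1$ of type $(C_p\times C_p)\rtimes C_2$, as claimed. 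The main obstacle will be this bookkeeping, requiring one to determine for each $P$ the set of $z$-translates that remain inside $G$; this is controlled by the rank of $\bigl(\begin{smallmatrix}i & k\\j & l\end{smallmatrix}\bigr)$ in family $1$ and of its analogue in family $2$.
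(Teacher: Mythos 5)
Your proposal is correct and follows essentially the same route as the paper's proof: the same split into semiregular $p$-subgroups contained in $F$ versus those in $\Syl_p$ but not in $F$, the same conjugation formulas and membership congruences, and the same correction factors (the number of involutions of the form $(a^{m'}b^{n'}c,\Id)$ lying in each resulting regular subgroup, governed by the rank of $\left(\begin{smallmatrix}i&k\\ j&l\end{smallmatrix}\right)$), leading to identical totals. Your explicit use of the eigenvalues of the induced involution $\sigma$ on $P$ to read off the isomorphism type is a slightly cleaner packaging of what the paper does implicitly.
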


\begin{proof} We look first for regular subgroups with $p$-Sylow subgroup equal to $F$. We have

$$\begin{array}{l} (a^mb^nc,\Id)(a,\varphi_2^i\varphi_3^j)(a^mb^nc,\Id)=(a^{-i-1}b^{-j},\varphi_2^i\varphi_3^j), \\ (a^mb^nc,\Id)(b,\varphi_2^k\varphi_3^l)(a^mb^nc,\Id)=(a^{-k}b^{-l-1},\varphi_2^k\varphi_3^l). \end{array}$$

\noindent
The elements $(a^{-i-1}b^{-j},\varphi_2^i\varphi_3^j)$ and $(a^{-k}b^{-l-1},\varphi_2^k\varphi_3^l)$ belong to $\langle (a,\varphi_2^i\varphi_3^j),(b,\varphi_2^k\varphi_3^l)\rangle$ if and only if $i,j,k,l$ satisfy the system

$$\left\{ \begin{array}{l} -i^2-i-jk=i \\ -ij-j-jl=j \Leftrightarrow j(-i-l-2)=0\\-ik-kl-k=k\Leftrightarrow k(-i-l-2)=0\\ -jk-l^2-l=l \end{array} \right.$$

\noindent If $j=k=0$, then $(i,l)=(0,0),(0,p-2),(p-2,0)$ or $(p-2,p-2)$.
We obtain the $p^2$ subgroups $\langle (a,\varphi_2^{-2}),(b,\varphi_3^{-2}),(a^mb^nc,\Id)\rangle$ isomorphic to $C_p\times C_{2p}$;  the $p$ subgroups $\langle (a,\varphi_2^{-2}),(b,\Id),(a^mc,\Id)\rangle$ and the $p$ subgroups $\langle (a,\Id),(b,\varphi_3^{-2}),(b^nc,\Id)\rangle$ isomorphic to $C_p\times D_{2p}$; and the subgroup $\langle a,b,c \rangle$ isomorphic to $(C_p\times C_p)\rtimes C_2$.
If $i+l=-2$, then $kj=-i^2-2i$ and with $i,j,k,l$ satisfying these relations we further obtain  $p(p+2)(p-1)$ subgroups isomorphic to $C_p\times D_{2p}$. Here we take into account that for $i=0$ or $i=-2$, we have $j=0$ or $k=0$ and that each subgroup isomorphic to $C_p\times D_{2p}$ contains $p$ elements of order 2.

We look now for regular subgroups with $p$-subgroup contained in $\Syl_p$ and different from $F$. We have

$$\begin{array}{rcl} (a^mb^nc,\Id)(a,\varphi_2^{i_2})(a^mb^nc,\Id)&=&(a^{-i_2-1},\varphi_2^{i_2})
), \\ (a^mb^nc,\Id)(b,\varphi_1^{j_1}\varphi_2^{j_2} \varphi_3^{j_3})(a^mb^nc,\Id)&=& (a^{-j_2-nj_1-j_1j_3}b^{-j_3-1},\varphi_1^{j_1}\varphi_2^{j_2} \varphi_3^{j_3}).\end{array}$$

\noindent The elements $(a^{-i_2-1},\varphi_2^{i_2})$ and $(a^{-j_2-nj_1-j_1j_3}b^{-j_3-1},\varphi_1^{j_1}\varphi_2^{j_2} \varphi_3^{j_3})$ belong to $\langle (a,\varphi_2^{i_2}),(b,\varphi_1^{j_1}\varphi_2^{j_2} \varphi_3^{j_3})\rangle$ if and only if either $i_2=-2$, $j_3=-2$ and $j_2=(2-n)j_1$ or $i_2=0$ and $j_3=-2$.

We obtain the $p^2(p-1)$ subgroups $\langle (a,\varphi_2^{-2}),(b,\varphi_1^{j_1}\varphi_2^{(2-n)j_1}\varphi_3^{-2}),(a^mb^nc,\Id)\rangle$ isomorphic to $C_p\times C_{2p}$, the $p^2(p-1)$ subgroups $\langle (a,\Id),(b,\varphi_1^{j_1}\varphi_2^{j_2} \varphi_3^{-2}),(b^nc,\Id)\rangle$ isomorphic to $C_p\times D_{2p}$ and no subgroup isomorphic to $(C_p\times C_p)\rtimes C_2$.

Summing up, taking into account that the number of $p$-Sylow subgroups of $\Hol((C_p\times C_p) \rtimes C_2)$ is $p+1$, we obtain $p^2+p^2(p^2-1)=p^4$ regular subgroups isomorphic to $C_p\times C_{2p}$, $2p+p(p+2)(p-1)+p^2(p^2-1)=p^3(p+1)$ regular subgroups isomorphic to $C_p\times D_{2p}$ and $1$ regular subgroup isomorphic to $(C_p\times C_{p})\rtimes C_2$.
\end{proof}

\begin{lemma} The regular subgroups of $\Hol((C_p\times C_{p})\rtimes C_2)$, having an element of order 2 of the form $(z,\chi_1)$ and with $p$-Sylow subgroup equal to $F$, are precisely $p^3(p+1)$ regular subgroups isomorphic to $C_p\times C_{2p}$, $p^3(p+1)^2$ regular subgroups isomorphic to $C_p\times D_{2p}$ and $p(p+1)$ regular subgroups isomorphic to $(C_p\times C_{p})\rtimes C_2$.
\end{lemma}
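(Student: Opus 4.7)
The plan follows the template of the preceding lemmas, but requires extra work since $\chi_1$ acts nontrivially on the Sylow subgroup $F$. First I would compute the action of conjugation by $\chi_1$ on $\varphi_2$ and $\varphi_3$. Using the defining equations of $\chi_1$ together with the constraints $r^2+st=1$, $(r+1)u+tv=0$ and $su+(1-r)v=0$, a direct calculation gives
$$\chi_1\varphi_2\chi_1 = \varphi_2^r\varphi_3^s, \qquad \chi_1\varphi_3\chi_1 = \varphi_2^t\varphi_3^{-r},$$
so that conjugation by $\chi_1$ acts on $\langle\varphi_2,\varphi_3\rangle$ via the same matrix $M = \left(\begin{smallmatrix}r & t\\ s & -r\end{smallmatrix}\right)$ as it acts on $\langle a,b\rangle$, and $M^2=I$.

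Second, I would compute the two conjugates of the generators of the semiregular subgroup. Using the order-2 relations $(1-r)m - tn = u$ and $(1+r)n - sm = v$, the $u,v,m,n$ dependence cancels and I obtain
\begin{gather*}
(z,\chi_1)(a,\varphi_2^i\varphi_3^j)(z,\chi_1) = \bigl(a^{-r(1+i)-tj}\,b^{-s(1+i)+rj},\,\varphi_2^{ri+tj}\varphi_3^{si-rj}\bigr),\\
(z,\chi_1)(b,\varphi_2^k\varphi_3^l)(z,\chi_1) = \bigl(a^{-t(1+l)-rk}\,b^{r(1+l)-sk},\,\varphi_2^{rk+tl}\varphi_3^{sk-rl}\bigr).
\end{gather*}

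Third, I impose that both conjugates lie in $H := \langle (a,\varphi_2^i\varphi_3^j),(b,\varphi_2^k\varphi_3^l)\rangle$, whose general element has the form $(a^\alpha b^\beta,\varphi_2^{i\alpha+k\beta}\varphi_3^{j\alpha+l\beta})$. Matching the automorphism parts gives a $2\times 2$ matrix identity for the involution induced on $H$ by conjugation with $(z,\chi_1)$; being an involution over $\F_p$ with $p$ odd, it is diagonalisable with eigenvalues in $\{+1,-1\}$. Three cases arise: both eigenvalues $+1$ (the conjugation fixes both generators, giving the abelian regular subgroup $C_p\times C_{2p}$), mixed $+1$ and $-1$ (giving $C_p\times D_{2p}$), and both $-1$ (giving $(C_p\times C_p)\rtimes C_2$). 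The first-component equations then determine the remaining parameters.

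Finally, I count each isomorphism type. The $p(p+1)$ triples $(r,s,t)$ with $r^2+st=1$ each support $p$ choices of $(u,v)$ (the constraint matrix has rank one), and for each $(u,v)$ there are $p$ choices of $(m,n)$ (the order-2 system has the same rank and is consistent since its image coincides with the kernel of the $(u,v)$-constraint). The compatible quadruples $(i,j,k,l)$ with $(i+1)(l+1)-jk\not\equiv 0\pmod p$ are enumerated case by case; dividing by the number of order-2 elements of the form $(z,\chi_1)$ in each regular subgroup ($1$ for $C_p\times C_{2p}$, $p$ for $C_p\times D_{2p}$, $p^2$ for $(C_p\times C_p)\rtimes C_2$) yields the stated totals $p^3(p+1)$, $p^3(p+1)^2$ and $p(p+1)$. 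The main obstacle will be the combinatorial analysis in the mixed-eigenvalue case, where one must identify which of the shifted columns $(i+1,j)$ and $(k,l+1)$ lies in the $+1$ eigenspace of $M$ and which in the $-1$ eigenspace, and treat separately the degenerate triples $(r,s,t)$ with $r=\pm 1$ and $st=0$, whose eigenspaces align with the coordinate axes.
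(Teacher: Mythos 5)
Your plan matches the paper's proof: the conjugation formulas $\chi_1\varphi_2\chi_1=\varphi_2^r\varphi_3^s$, $\chi_1\varphi_3\chi_1=\varphi_2^t\varphi_3^{-r}$ and your two displayed conjugates agree exactly with the computations in the text, and the paper likewise splits into the three cases according to whether conjugation by $(z,\chi_1)$ fixes or inverts each generator of the semiregular subgroup, then corrects for the $1$, $p$ or $p^2$ order-two elements of the relevant form. The only cosmetic difference is that you organise the case split via the eigenvalues of the induced involution and treat the non-aligned mixed case directly, whereas the paper enumerates the aligned representatives and multiplies by the $p(p+1)$ conjugates in the $C_p\times D_{2p}$ case; both routes give the stated totals.
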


\begin{proof}
We have $\chi_1\varphi_2\chi_1=\varphi_2^r \varphi_3^s$, $\chi_1\varphi_3\chi_1=\varphi_2^t \varphi_3^{-r}$ and

$$\begin{array}{l} (a^mb^nc,\chi_1)(a,\varphi_2^i\varphi_3^j)(a^mb^nc,\chi_1)=(a^{-r-ir-tj}b^{-s-is+rj},\varphi_2^{ri+tj}\varphi_3^{si-rj}), \\ (a^mb^nc,\chi_1)(b,\varphi_2^k\varphi_3^l)(a^mb^nc,\chi_1)=(a^{-t-kr-tl}b^{r-ks+rl},\varphi_2^{rk+tl}\varphi_3^{sk-rl}). \end{array}$$

 By imposing the elements on the right to be equal to $(a,\varphi_2^i\varphi_3^j)$ and $(b,\varphi_2^k\varphi_3^l)$, respectively, we obtain the $p^3$ subgroups $\langle (a,\Id),(b,\varphi_2^k\varphi_3^{-2}),(a^mb^n c,\chi_1) \rangle$, with $r=-1, s=0, t=-k, u=2m+kn, v=0$, the $p^3$ subgroups $\langle (a,\varphi_2^{-2}),(b,\varphi_2^k),(a^m b^nc,\chi_1) \rangle$, with $r=1, s=0, t=-k, u=kn, v=2n$ and the $p^3(p-1)$ subgroups  $\langle (a,\varphi_2^i\varphi_3^j),(b,\varphi_2^k\varphi_3^l),(a^mb^nc,\chi_1) \rangle$, with $j\neq 0$, $jk=-i^2-2i,l=-i-2$, $r=-i-1, s=-j, t=-k, ju=(2+i)v,jm=v+in$, all of them isomorphic to $C_p\times C_{2p}$.

 By imposing the elements on the right to be equal to $(a,\varphi_2^i\varphi_3^j)^{-1}=(a^{-1},\varphi_2^{-i}\varphi_3^{-j})$ and $(b,\varphi_2^k\varphi_3^l)$, respectively, we obtain the $p^2$ subgroups $\langle (a,\Id),(b,\varphi_2^k),(b^n c,\chi_1) \rangle$, with $r=1, s=0, t=-k, u=kv/2, n=v/2$, isomorphic to $C_p \times D_{2p}$. We further obtain the $p^2$ subgroups $\langle (a,\varphi_2^{-2}),(b,\varphi_2^k\varphi_3^{-2}),(a^mb^n c,\chi_1) \rangle$, with $r=-1, s=0, t=-k, v=0, m=(u-kn)/2$, and the $p^2(p-1)$ subgroups $\langle (a,\varphi_2^{i}\varphi_3^j),(b,\varphi_2^k\varphi_3^{i}),(a^mb^n c,\chi_1) \rangle$, with $j\neq 0, jk=i^2+2i, r=i+1, s=j, t=-k, ju=iv, jm=(i+2)n-v$, all of them isomorphic to $C_p \times D_{2p}$, where we are taking into account that each of these subgroups contains $p$ elements of order $2$ of the form $(a^mb^n c,\chi_1)$. Each of these subgroups isomorphic to $C_p \times D_{2p}$ has $p(p+1)$ different conjugates generated by elements of the form $(a,\varphi_2^i\varphi_3^j), (b,\varphi_2^k\varphi_3^l)$ and $(z,\chi_1)$.

Finally by imposing the elements on the right to be equal to $(a,\varphi_2^i\varphi_3^j)^{-1}=(a^{-1},\varphi_2^{-i}\varphi_3^{-j})$ and $(b,\varphi_2^k\varphi_3^l)^{-1}=(b^{-1},\varphi_2^{-k}\varphi_3^{-l})$, respectively, we obtain the $p(p+1)$ subgroups \linebreak $\langle (a,\varphi_2^{i}\varphi_3^j),(b,\varphi_2^k\varphi_3^{-i-2}),(c,\chi_1) \rangle$, with $jk=-i^2-2i, r=i+1, s=j, t=k, u=v=0$, all of them isomorphic to $(C_p \times C_{p})\rtimes C_2$. In order to determine the number of subgroups, we take into account that $(C_p\times C_p) \rtimes C_2$ contains $p^2$ elements of order 2.

Summing up, we obtain $p^3(p+1)$ regular subgroups isomorphic to $C_p\times C_{2p}$, $p^3(p+1)^2$ regular subgroups isomorphic to $C_p\times D_{2p}$ and $p(p+1)$ regular subgroups isomorphic to $(C_p\times C_{p}) \rtimes C_2$.
\end{proof}

\begin{lemma} The regular subgroups of $\Hol((C_p\times C_p)\rtimes C_2)$, having an element of order 2 of the form $(z,\chi_1)$ and with $p$-Sylow subgroup different from $F$ are precisely $2p^3(p^2-1)$ regular subgroups isomorphic to $C_p\times D_{2p}$ and $2p(p^2-1)$ regular subgroups isomorphic to $(C_p\times C_{p})\rtimes C_2$.
\end{lemma}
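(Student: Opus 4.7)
The plan is to parallel the structure of the previous lemma, but now taking semiregular subgroups of $\Syl_p$ that are not contained in $F$. These subgroups have the form $H=\langle (a,\varphi_2^{i_2}),(b,\psi)\rangle$ with $\psi=\varphi_1^{j_1}\varphi_2^{j_2}\varphi_3^{j_3}$ and $j_1\neq 0$, and the task is to determine when an element $(z,\chi_1)$ of order $2$, with $z=a^mb^nc$ and $\chi_1$ as in (\ref{chi1}), normalizes $H$ and generates together with $H$ a regular subgroup of $\Hol((C_p\times C_p)\rtimes C_2)$.

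First I would compute $\chi_1\varphi_1\chi_1$ directly from the definition of $\chi_1$, which, together with the formulas $\chi_1\varphi_2\chi_1=\varphi_2^r\varphi_3^s$ and $\chi_1\varphi_3\chi_1=\varphi_2^t\varphi_3^{-r}$ already used in the preceding lemma, yields a closed expression for $\chi_1\psi\chi_1$. A short calculation (using $r^2+st=1$) gives $\chi_1\varphi_1\chi_1(a)=a^{1+rs}b^{s^2}$; since every element of $\Syl_p$ fixes $a$, normalization of $H$ forces $s=0$, and hence $r=\pm 1$.

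Next I would verify that under $s=0$, $r=\pm 1$ the automorphism $\chi_1\varphi_1\chi_1$ equals $\varphi_1^{-1}$ up to a central correction by a power of $\varphi_2$. Consequently the $\varphi_1$-exponent of $\chi_1\psi\chi_1$ is $-j_1$, which, as $p$ is odd and $j_1\neq 0$, is never congruent to $j_1$ mod $p$. Therefore $(z,\chi_1)$ cannot act trivially by conjugation on $H$, and no regular subgroup of type $C_p\times C_{2p}$ arises from these data -- this explains why the statement of the lemma lists only the other two isomorphism types.

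Then I would split into sub-cases according to whether the conjugate by $(z,\chi_1)$ of each of $(a,\varphi_2^{i_2})$ and $(b,\psi)$ equals the generator itself or its inverse inside $H$. The sub-case in which both are inverted produces a regular subgroup isomorphic to $(C_p\times C_p)\rtimes C_2$, and the sub-case in which exactly one is inverted produces $C_p\times D_{2p}$. For each choice of case and each of $r=\pm 1$, the condition that the conjugates lie in $H$ translates into a linear system in $t,u,v,m,n$ and into parameter identities among $i_2,j_1,j_2,j_3$, which I would solve and enumerate as in the previous lemma. One must then account for the fact that each regular subgroup of the first type contains $p^2$ elements of order $2$ and each of the second contains $p$ such elements of the form $(z,\chi_1)$; finally, multiplying by $p+1$ (the number of $p$-Sylow subgroups of $\Hol((C_p\times C_p)\rtimes C_2)$ computed in Lemma \ref{le51}) one should obtain the asserted totals $2p^3(p^2-1)$ and $2p(p^2-1)$.

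The main obstacle will be the bookkeeping in the enumeration step: keeping track of the constraints imposed on all of $r,s,t,u,v,i_2,j_1,j_2,j_3,m,n$ in each sub-case, and handling the multiplicities arising from the many elements of the form $(z,\chi_1)$ that a single regular subgroup may contain. The Sylow count and the elimination of the $C_p\times C_{2p}$ case above substantially reduce the casework, but the final tally still requires careful enumeration parallel to the pattern set by the previous lemmas of this section.
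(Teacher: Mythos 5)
Your plan follows the paper's proof essentially step for step: the same reduction forcing $s=0$ (hence $r=\pm 1$) from the requirement $\chi_1\varphi_1\chi_1\in\langle\varphi_1,\varphi_2,\varphi_3\rangle$, the same restriction $i_2\in\{0,-2\}$, the same case split on $r$ and $i_2$, and the same final multiplication by the $p+1$ Sylow subgroups; your explicit observation that the $\varphi_1$-exponent of $\chi_1\psi\chi_1$ is $-j_1\neq j_1$, ruling out the type $C_p\times C_{2p}$, is a worthwhile addition that the paper leaves implicit. One caution for the enumeration step: the conjugate of $(b,\varphi_1^{j_1}\varphi_2^{j_2}\varphi_3^{j_3})$ need only lie in $H$, and in the $C_p\times D_{2p}$ cases it equals $(a,\varphi_2^{i_2})^k(b,\psi)^{-1}$ with $k$ generally nonzero, so splitting literally by ``conjugate equals the generator or its inverse'' would undercount --- impose membership in $H$, as you also state and as the paper does.
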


\begin{proof}
We have $\chi_1\varphi_1\chi_1(a)=a^{1+rs}b^{s^2}, \chi_1\varphi_1\chi_1(b)=a^{-r^2}b^{1-rs}, \chi_1\varphi_1\chi_1(c)=a^{rv}b^{sv} c$. Hence $\chi_1\varphi_1\chi_1 \in \langle \varphi_1,\varphi_2,\varphi_3 \rangle$ implies $s=0$ and then $\chi_1\varphi_1\chi_1=\varphi_1^{-r^2} \varphi_2^{rv}$. Moreover $s=0$ implies $r=\pm 1$, by (\ref{chi1}). Now, with $s=0$, we have

$$(a^mb^nc,\chi_1)(b,\varphi_1^{j_1}\varphi_2^{j_2}\varphi_3^{j_3})(a^mb^nc,\chi_1)=(a^{-t-rj_2-rn-rj_3-tj_3}b^{r+rj_3},
\varphi_1^{-r^2j_1}\varphi_2^{rvj_1+rj_2+tj_3}\varphi_3^{-rj_3})$$

\noindent
and

$$(a^mb^nc,\chi_1)(a,\varphi_2^{i_2})(a^mb^nc,\chi_1)=(a^{-r-ri_2},\varphi_2^{ri_2}).$$

\noindent This last element belongs to the subgroup $\langle (a,\varphi_2^{i_2}),(b,\varphi_1^{j_1}\varphi_2^{j_2} \varphi_3^{j_3}) \rangle$ if and only if $i_2=0$ or $i_2=-2$. Let us note that $(b,\varphi_1^{j_1}\varphi_2^{j_2}\varphi_3^{j_3})^{-1}=(a^{j_1} b^{-1},\varphi_1^{-j_1}\varphi_2^{-j_2-j_1j_3}\varphi_3^{-j_3})$. We distinguish now the two cases $r=1$ and $r=-1$.

If $r=1$,

\begin{equation}
(a^mb^nc,\chi_1)(b,\varphi_1^{j_1}\varphi_2^{j_2}\varphi_3^{j_3})(a^mb^nc,\chi_1)=(a^{-t-j_2-n-j_3-tj_3}b^{1+j_3},
\varphi_1^{-j_1}\varphi_2^{vj_1+j_2+tj_3}\varphi_3^{-j_3}) \label{eqf1}
\end{equation}

\noindent
Now, if $i_2=0$, then $(a^mb^nc,\chi_1)(a,\Id)(a^mb^nc,\chi_1)=(a^{-1},\Id)$. Hence $(a^mb^nc,\chi_1)$ normalizes $\langle (a,\Id),(b,\varphi_1^{j_1}\varphi_2^{j_2} \varphi_3^{j_3}) \rangle$ if and only if $j_3=-2, (v-2)j_1+2j_2-2t=0$. We obtain then the $p(p-1)$ subgroups $\langle (a,\Id),(b,\varphi_1^{j_1}\varphi_2^{j_2} \varphi_3^{-2}),(b^nc,\chi_1) \rangle$,  with $j_1 \neq 0, j_1n=j_1-j_2, r=1, s=t=0, u=0, v=2n$, isomorphic to $(C_p\times C_p) \rtimes C_2$. Now if $i_2=-2$, then $(a^mb^nc,\chi_1)(a,\varphi_2^{-2})(a^mb^nc,\chi_1)=(a,\varphi_2^{-2})$.  The element in the righthand side of (\ref{eqf1}) belongs to the subgroup $\langle (a,\varphi_2^{-2}),(b,\varphi_1^{j_1}\varphi_2^{j_2} \varphi_3^{j_3}) \rangle$ if and only if $j_3=-2$ and $4j_1+2n-4-vj_1=0$. Taking into account the relation $v=2n$, we obtain $(j_1-1)(2-n)=0$. We have the further relation $u=-tn$. This gives the $p^3$ transitive subgroups $\langle (a,\varphi_2^{-2}),(b,\varphi_1\varphi_2^{j_2}\varphi_3^{-2}),(a^mb^nc,\chi_1)\rangle$, with $u=-tn, v=2n$ and the $(p-2)p^3$ transitive subgroups $\langle (a,\varphi_2^{-2}),(b,\varphi_1^{j_1}\varphi_2^{j_2}\varphi_3^{-2}),(a^mb^2c,\chi_1)\rangle$, with $j_1 \neq 1, u=-2t, v=4$, all of them isomorphic to $D_{2p} \times C_p$.

If $r=-1$,

\begin{equation}
(a^mb^nc,\chi_1)(b,\varphi_1^{j_1}\varphi_2^{j_2}\varphi_3^{j_3})(a^mb^nc,\chi_1)=(a^{-t+j_2+n+j_3-tj_3}b^{-1-j_3},
\varphi_1^{-j_1}\varphi_2^{-vj_1-j_2+tj_3}\varphi_3^{j_3}). \label{eqf2}
\end{equation}

\noindent
Now, if $i_2=0$, then  $(a^mb^nc,\chi_1)(a,\Id)(a^mb^nc,\chi_1)=(a,\Id)$. The element in the righthand side of (\ref{eqf2}) belongs to the subgroup $\langle (a,\Id),(b,\varphi_1^{j_1}\varphi_2^{j_2} \varphi_3^{j_3}) \rangle$ if and only if $j_3=0$ and $v=0$. We obtain the $(p-1)p^3$ transitive groups $\langle (a,\Id), (b,\varphi_1^{j_1}\varphi_2^{j_2}),(c,\chi_1)\rangle$, with $v=0$, isomorphic to $C_p\times D_{2p}$.
 Now if $i_2=-2$, then $(a^mb^nc,\chi_1)(a,\varphi_2^{-2})(a^mb^nc,\chi_1)=(a^{-1},\varphi_2^{2})$.  Hence $(a^mb^nc,\chi_1)$ normalizes $\langle (a,\varphi_2^{-2}),(b,\varphi_1^{j_1}\varphi_2^{j_2} \varphi_3^{j_3}) \rangle$ if and only if $j_3=0, v=0$ and $n=t-j_2$. We obtain then the $p(p-1)$ transitive subgroups $\langle (a,\varphi_2^{-2}),(b,\varphi_1^{j_1}\varphi_2^{j_2}),(c,\chi_1) \rangle$, with $r=-1, s=0, t=j_2, u=v=0$, isomorphic to $(C_p\times C_p)\rtimes C_2$.

Summing up, we obtain the following numbers of regular subgroups with $p$-Sylow subgroup contained in $\Syl_p$ and different from $F$: $2p^3(p-1)$ isomorphic to $C_p\times D_{2p}$ and $2p(p-1)$ isomorphic to $(C_p\times C_{p})\rtimes C_2$.
The corresponding number of regular subgroups of $\Hol((C_p\times C_{p})\rtimes C_2)$ is obtained taking into account that the number of $p$-Sylow subgroups of $\Hol((C_p\times C_{p})\rtimes C_2)$ is $p+1$.
\end{proof}

\begin{lemma} The regular subgroups of $\Hol((C_p\times C_{p})\rtimes C_2)$, having an element of order 2 of the form $(z,\chi_2)$ are precisely $p^4$ regular subgroups isomorphic to $C_p\times C_{2p}$, $p^3(p+1)$ regular subgroups isomorphic to $C_p\times D_{2p}$ and $1$ regular subgroup isomorphic to $(C_p\times C_{p})\rtimes C_2$.
\end{lemma}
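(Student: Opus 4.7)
The plan is to follow the template of the preceding two lemmas, splitting into the two cases (i) the $p$-Sylow of the regular subgroup equals the canonical $F$, and (ii) it lies in a $\Syl_p$ but not in $F$. In each case I start with the list of semiregular $p$-subgroups from the previous lemma, compute the conjugation of its two generators by a candidate involution $(z,\chi_2)$, and isolate the conditions under which both conjugates return to the semiregular subgroup $H$.

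To execute this I first compute the action of $\chi_2$ by conjugation on the generating automorphisms: $\chi_2\varphi_2\chi_2=\varphi_2^{-1}$, $\chi_2\varphi_3\chi_2=\varphi_3^{-1}$, and $\chi_2\varphi_1\chi_2=\varphi_1\varphi_2^{-v}$. Since an element $(z,\chi_2)$ of order $2$ is forced to have $z=a^{u/2}b^{v/2}c$ (write $m=u/2$, $n=v/2$), there are $p^2$ such involutions, parameterised by $(u,v)$. A direct calculation shows that the conjugation formulas for the generators of the semiregular subgroups of $F$ are in fact independent of $(u,v)$, which reduces the analysis in case (i) to a combinatorial problem in $(i,j,k,l)$.

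In case (i), imposing that both conjugates lie in $H=\langle(a,\varphi_2^i\varphi_3^j),(b,\varphi_2^k\varphi_3^l)\rangle$ leads to the system
$$
i^2+2i+jk=0,\quad j(i+l+2)=0,\quad k(i+l+2)=0,\quad l^2+2l+jk=0,
$$
whose solutions split into Case A ($l=-i-2$, $jk=-i(i+2)$; $p^2+p$ quadruples) and Case B ($j=k=0$, $(i,l)\in\{(0,0),(-2,-2)\}$). The conjugation matrix $M$ of $(z,\chi_2)$ on $H\simeq(\Z/p\Z)^2$ has trace $i+l+2$ and determinant $(i+1)(l+1)-jk$; in Case A these simplify to $0$ and $-1$, so $M$ has eigenvalues $\pm 1$ and $R\simeq C_p\times D_{2p}$; in Case B one has $M=I$ for $(0,0,0,0)$ or $M=-I$ for $(-2,0,0,-2)$, producing $R\simeq C_p\times C_{2p}$ and $R\simeq(C_p\times C_p)\rtimes C_2$ respectively. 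The delicate point is the multiplicity: the number of involutions of $R$ of the form $(z',\chi_2^{(u',v')})$ equals $p^{\dim\ker(M+I)}$, hence $p,\,1,\,p^2$ in Cases A, B1, B2, so one obtains $p^3+p^2$ subgroups of type $C_p\times D_{2p}$, $p^2$ of type $C_p\times C_{2p}$, and $1$ of type $(C_p\times C_p)\rtimes C_2$ from $F$.

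In case (ii), repeating the computation for $H=\langle(a,\varphi_2^{i_2}),(b,\varphi_1^{j_1}\varphi_2^{j_2}\varphi_3^{j_3})\rangle$ with $j_1\neq 0$ forces $j_3=0$ and $i_2\in\{0,-2\}$, and the $\varphi_2$-component of the constraint then yields either $nj_1+j_2=0$ (when $i_2=0$, with centralising action and $R\simeq C_p\times C_{2p}$) or no further condition (when $i_2=-2$, with $\Tr M=0$, $\det M=-1$, hence $R\simeq C_p\times D_{2p}$). Each subcase yields, by the same multiplicity analysis, $p^2(p-1)$ regular subgroups per Sylow; multiplying by the $p+1$ Sylow $p$-subgroups and adding the $F$-contributions gives the stated totals $p^4$, $p^3(p+1)$, and $1$. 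The main obstacle is the bookkeeping of how many $(u,v)$ produce the same regular subgroup, which reduces to the eigenspace-dimension computation for $M+I$.
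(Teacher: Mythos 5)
Your proposal is correct and follows essentially the same route as the paper: the same split into the case where the $p$-Sylow subgroup equals $F$ versus the case where it lies in a Sylow $p$-subgroup but not in $F$, the same conjugation formulas for $(z,\chi_2)$, and the same resulting families and totals $p^2+p^2(p^2-1)=p^4$, $p^2(p+1)+p^2(p^2-1)=p^3(p+1)$, and $1$. The only difference is bookkeeping: where the paper imposes that the conjugated generators equal the generators or their inverses and then counts conjugates of representative subgroups, you solve the full membership system and read off both the isomorphism type and the multiplicity from the eigenstructure of the conjugation matrix $M$ (via $\dim\ker(M+I)$), which is a clean and correct refinement of the same argument.
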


\begin{proof}
We have $\chi_2\varphi_2\chi_2=\varphi_2^{-1}$, $\chi_2\varphi_3\chi_2=\varphi_3^{-1}$ and

$$\begin{array}{l} (a^{u/2}b^{v/2}c,\chi_2)(a,\varphi_2^i\varphi_3^j)(a^{u/2}b^{v/2}c,\chi_2)=(a^{i+1}b^{j},\varphi_2^{-i}\varphi_3^{-j}), \\ (a^{u/2}b^{v/2}c,\chi_2)(b,\varphi_2^k\varphi_3^l)(a^{u/2}b^{v/2}c,\chi_2)=(a^{k}b^{l+1},\varphi_2^{-k}\varphi_3^{-l}). \end{array}$$

By imposing the elements on the right to be equal to $(a,\varphi_2^i\varphi_3^j)$ and $(b,\varphi_2^k\varphi_3^l)$ respectively, we obtain the $p^2$ subgroups $\langle (a,\Id),(b,\Id), (a^{u/2}b^{v/2} c,\chi_2) \rangle$ isomorphic to $C_p\times C_{2p}$. By imposing the elements on the right to be equal to $(a,\varphi_2^i\varphi_3^j)^{-1}=(a^{-1},\varphi_2^{-i}\varphi_3^{-j})$ and $(b,\varphi_2^k\varphi_3^l)$ respectively, we obtain the $p$ subgroups $\langle (a,\varphi_2^{-2}),(b,\Id), (b^{v/2} c,\chi_2) \rangle$ isomorphic to $C_p\times D_{2p}$. Each of these groups has $p(p+1)$ conjugates. By imposing the elements on the right to be equal to $(a,\varphi_2^i\varphi_3^j)^{-1}=(a^{-1},\varphi_2^{-i}\varphi_3^{-j})$ and $(b,\varphi_2^k\varphi_3^l)^{-1}=(b^{-1},\varphi_2^{-k}\varphi_3^{-l})$, we obtain the subgroup $\langle (a,\varphi_2^{-2}),(b,\varphi_3^{-2}), (c,\chi_2) \rangle$, with $u=v=0$,  isomorphic to $(C_p\times C_p) \rtimes C_2$.

Now we have $\chi_2\varphi_1\chi_2=\varphi_2^{-v}\varphi_1$ and

$$\begin{array}{l} (a^{u/2}b^{v/2}c,\chi_2)(a,\varphi_2^{i_2})(a^{u/2}b^{v/2}c,\chi_2)=(a^{i_2+1},\varphi_2^{-i_2}), \\ (a^{u/2}b^{v/2}c,\chi_2)(b,\varphi_1^{j_1}\varphi_2^{j_2}\varphi_3^{j_3})(a^{u/2}b^{v/2}c,\chi_2)=(a^{vj_1/2+j_2+j_1j_3}b^{j_3+1},\varphi_1^{j_1}\varphi_2^{-vj_1-j_2}
\varphi_3^{-j_3}). \end{array}$$

Taking into account the equality

$$(b,\varphi_1^{j_1}\varphi_2^{j_2}\varphi_3^{j_3})^k=(a^{j_1k(k-1)/2}b^k,\varphi_1^{kj_1}\varphi_2^{kj_2-j_1j_3k(k-1)/2}\varphi_3^{kj_3}),$$

\noindent
we obtain that the elements on the right belong to $\langle (a,\varphi_2^{i_2}),(b,\varphi_1^{j_1}\varphi_2^{j_2}\varphi_3^{j_3})\rangle$ only if $i_2^2+2i_2=0$ and $j_3=0$. If $i_2=j_3=0$, we have the further condition $vj_1+2j_2=0$. We obtain the $p^2(p-1)$ subgroups $\langle (a,\Id),(b,\varphi_1^{j_1}\varphi_2^{-vj_1/2}),(a^{u/2}b^{v/2}c,\chi_2)\rangle$ isomorphic to $C_p\times C_{2p}$. If $i_2=-2$ and $j_3=0$, there is no extra condition and we obtain the $p^2(p-1)$ subgroups $\langle (a,\varphi_2^{-2}),(b,\varphi_1^{j_1}\varphi_2^{j_2}),(b^{v/2}c,\chi_2)\rangle$ isomorphic to $C_p\times D_{2p}$.

Summing up, taking into account that the number of $p$-Sylow subgroups of $Hol((C_p\times C_p) \rtimes C_2$ is $p+1$, we obtain $p^2+p^2(p^2-1)=p^4$ regular subgroups isomorphic to $C_p\times C_{2p}$, $p^2(p+1)+p^2(p^2-1)=p^3(p+1)$ regular subgroups isomorphic to $C_p\times D_{2p}$ and $1$ regular subgroup isomorphic to $(C_p\times C_{p})\rtimes C_2$.
\end{proof}

Summing up the results in lemmas 21 to 24, we obtain

\begin{proposition}\label{prop5}
The regular subgroups of $\Hol((C_p\times C_p)\rtimes C_2)$ are precisely $p^3(3p+1)$ regular subgroups isomorphic to $C_p\times C_{2p}$, $p^3(p+1)(3p+1)$ regular subgroups isomorphic to $C_p\times D_{2p}$ and $2p^3+p^2-p+2$ regular subgroups isomorphic to $(C_p\times C_p) \rtimes C_2$.
\end{proposition}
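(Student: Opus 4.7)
The plan is to deduce Proposition \ref{prop5} by simply summing the counts obtained in the preceding four lemmas. First I would argue that Lemmas 21--24 together exhaust, and partition, the regular subgroups of $\Hol((C_p\times C_p)\rtimes C_2)$. Any regular subgroup $G$ of order $2p^2$ contains at least one element of order $2$, and by the classification carried out just before Lemma 21 such an element of $\Hol((C_p\times C_p)\rtimes C_2)$ has the form $(z,\Id)$, $(z,\chi_1)$ with $\chi_1$ as in (\ref{chi1}), or $(z,\chi_2)$ with $\chi_2$ as in (\ref{chi2}); the two families $\chi_1$ and $\chi_2$ are set up to be disjoint (the defining relations in (\ref{chi1}) exclude the case which is parametrised separately in (\ref{chi2})). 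The $\chi_1$ case is further split by the intrinsic condition of whether the Sylow $p$-subgroup equals $F$ (Lemma 22) or not (Lemma 23), and this is a property of $G$ itself, so again there is no overlap. Hence the four lemmas give a bona fide partition of the set of regular subgroups.

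Next I would perform the bookkeeping type by type. For type $C_p\times C_{2p}$, Lemmas 21--24 contribute $p^4$, $p^3(p+1)$, $0$, and $p^4$ respectively, for a total of
\[
2p^4+p^3(p+1)=3p^4+p^3=p^3(3p+1).
\]
For type $C_p\times D_{2p}$, the contributions are $p^3(p+1)$, $p^3(p+1)^2$, $2p^3(p^2-1)=2p^3(p-1)(p+1)$, and $p^3(p+1)$, which factor as
\[
p^3(p+1)\bigl[1+(p+1)+2(p-1)+1\bigr]=p^3(p+1)(3p+1).
\]
Finally for type $(C_p\times C_p)\rtimes C_2$, the contributions are $1$, $p(p+1)$, $2p(p^2-1)$, and $1$, summing to
\[
2+p(p+1)+2p(p-1)(p+1)=2p^3+p^2-p+2.
\]

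The only genuinely delicate point is the claim that no regular subgroup is counted twice. Since each lemma counts subgroups, not pairs (subgroup, distinguished involution), one has to be sure that within a single lemma each subgroup is counted once (this is dealt with inside each proof by dividing by the number of involutions of the given form contained in the subgroup, e.g.\ the ``$p$ elements of order $2$'' remarks in Lemmas 22 and 24), and across different lemmas that the cases are incompatible. The latter is exactly the disjointness of $\{\Id\}$, the $\chi_1$-family, and the $\chi_2$-family of automorphism components, combined with the intrinsic split by the Sylow $p$-subgroup in Lemmas 22--23. Once this verification is in place the proposition follows immediately from the arithmetic above.
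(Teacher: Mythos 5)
Your proposal is correct and follows essentially the same route as the paper, whose proof of Proposition \ref{prop5} consists precisely of summing the counts from Lemmas 21--24; your arithmetic reproduces the stated totals exactly. The only addition is that you spell out why the four lemmas partition the regular subgroups (disjointness of the $\Id$, $\chi_1$ and $\chi_2$ families of involutions together with the intrinsic Sylow condition separating Lemmas 22 and 23), a point the paper leaves implicit.
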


\noindent
{\it End of proof of Theorem \ref{number}.} Propositions \ref{prop1}, \ref{prop2}, \ref{prop3}, \ref{prop4} and \ref{prop5} prove column by column the correctness of the table in Lemma \ref{equiv}, hence using this Lemma, the Theorem is proved. \hfill $\Box$

\section{Hopf Galois structures of cyclic type}

In this section we determine the separable extensions of degree $2p^2$ having a Hopf Galois structure of cyclic type and the number of these structures. The Galois case has been already studied in Section \ref{Galois}.

\begin{theorem} Let $L/K$ be a separable non Galois field extension of degree $2p^2$, for $p$ an odd prime number. Let $\wL$ be a normal closure of $L/K$, $G=\Gal(\wL/K)$. Then $L/K$ has a Hopf Galois structure of cyclic type if and only if $G$ is isomorphic to one of the following groups:
\begin{enumerate}[1)]
\item the semidirect product $C_{2p^2}\rtimes C_{(p-1)/d}$ of a cyclic group of order $2p^2$ and a cyclic group of order $(p-1)/d$ for $d$ a proper divisor of $p-1$;
\item the semidirect product $C_{p^2}\rtimes C_{(p-1)/d}$ of a cyclic group of order $p^2$ and a cyclic group of order $(p-1)/d$ for $d$ a proper divisor of $(p-1)/2$;
\item the semidirect product $C_{2p^2}\rtimes C_{p(p-1)/d}$ of a cyclic group of order $2p^2$ and a cyclic group of order $p(p-1)/d$ for $d$ a divisor of $p-1$;
\item the semidirect product $C_{p^2}\rtimes C_{p(p-1)/d}$ of a cyclic group of order $p^2$ and a cyclic group of order $p(p-1)/d$ for $d$ a divisor of $(p-1)/2$.
\end{enumerate}
The number of structures is $p$ when $|G|= 2p^3$ and 1 in all other cases.
\end{theorem}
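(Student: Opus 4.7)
The plan is to apply Theorem \ref{theoB}: cyclic Hopf-Galois structures on a non-Galois separable extension $L/K$ of degree $2p^2$ are in bijection with subgroups $G^*\leq\Hol(C_{2p^2})$ acting transitively on $C_{2p^2}$ with $|G^*|>2p^2$, via identifying $G\simeq G^*$ and $G'$ with the stabilizer of $1$. The key structural reduction is the isomorphism $\Hol(C_{2p^2})\simeq\Hol(C_{p^2})\times C_2$, coming from $C_{2p^2}=C_{p^2}\times C_2$ (coprime factors) and $\Aut(C_{2p^2})=\Aut(C_{p^2})=C_{p(p-1)}$. Under this decomposition, $G^*$ is transitive on $C_{p^2}\times C_2$ if and only if $H_1:=G^*\cap(\Hol(C_{p^2})\times\{1\})$ is transitive on $C_{p^2}$ and $G^*$ projects onto $C_2$; writing $G^*=H_1\sqcup H_1(g_0,t)$, the subgroup $G^*$ is parametrized by $H_1$ together with a coset $g_0H_1\in N_{\Hol(C_{p^2})}(H_1)/H_1$ of order dividing $2$.

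The core step is classifying transitive $H_1\leq\Hol(C_{p^2})$ by an argument parallel to Lemma~\ref{le11}. Every such $H_1$ either contains $C_{p^2}$, giving $H_1=C_{p^2}\rtimes\tilde Q$ for the unique cyclic $\tilde Q\leq\Aut(C_{p^2})$ of each order $e\mid p(p-1)$ (Case A), or has $H_1\cap C_{p^2}=C_p$, which forces the image of $H_1$ in $\Aut(C_{p^2})$ to lie in the unique order-$p$ subgroup and (after a lifting argument) produces the $p-1$ non-translation regular cyclic subgroups of order $p^2$ (Case B). Subgroups of type A are normal in $\Hol(C_{p^2})$ with $N/H_1\simeq C_{p(p-1)/e}$; those of type B have normalizer $C_{p^2}\rtimes C_p$ with $N/H_1\simeq C_p$ of odd order. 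Therefore Case B contributes only $G^*$'s of order $2p^2$ (the Galois case, already covered by Proposition~\ref{prop1}), while all non-Galois $G^*$ arise from Case A: taking $g_0\in H_1$ yields $G^*\simeq(C_{p^2}\rtimes C_e)\times C_2\simeq C_{2p^2}\rtimes C_e$, and taking $g_0$ in the order-$2$ coset of $N/H_1$ (which exists iff $2e\mid p(p-1)$) yields $G^*\simeq C_{p^2}\rtimes C_{2e}$, with $C_{2e}\leq\Aut(C_{p^2})$ acting faithfully. Partitioning $e$ according to whether $p\mid e$ recovers exactly families (1)--(4) of the theorem, and in each case $G^*$ is the unique realization of its isomorphism type in $\Hol(C_{2p^2})$, so $b(N,G,G')=1$ in Corollary~\ref{cor}.

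Applying Corollary~\ref{cor} then yields $a(N,L/K)=|\Aut(G,G')|/(p(p-1))$. A direct computation from the metacyclic presentations of $G$, with $G'$ identified as a specific cyclic subgroup of the complement, gives $|\Aut(G,G')|=p(p-1)$ in general, except for the two groups of order $2p^3$---namely $C_{2p^2}\rtimes C_p$ from family~(3) with $d=p-1$ and $C_{p^2}\rtimes C_{2p}$ from family~(4) with $d=(p-1)/2$---where an extra factor $p$ appears. This extra factor reflects the fact that for these two groups there is a one-parameter family of automorphisms fixing $G'$ obtained by shifting the image of the order-$p^2$ generator by a nontrivial element of the unique central $C_p$, a modification not available in the other groups, where it either fails to preserve $G'$ or violates the defining conjugation relation modulo $p^2$. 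The result is $a=1$ in general and $a=p$ when $|G|=2p^3$, establishing the theorem. The main obstacle is this final $|\Aut(G,G')|$ computation carried out uniformly across the four families, and in particular the clean isolation of the exceptional factor $p$ for the two $2p^3$ groups.
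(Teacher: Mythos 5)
Your reduction via Theorem \ref{theoB} and your classification of the transitive subgroups of $\Hol(C_{2p^2})$ are sound and lead to the same four families as the paper. The organization is slightly different: you factor $\Hol(C_{2p^2})\simeq \Hol(C_{p^2})\times C_2$ and parametrize $G^*$ by a transitive $H_1\le \Hol(C_{p^2})$ together with an order-$\le 2$ coset of $N_{\Hol(C_{p^2})}(H_1)/H_1$, whereas the paper works directly in $\Hol(C_{2p^2})$ with a case analysis on $|\Syl_p(G^*)|\in\{p^2,p^3\}$ and on whether the generator $a$ of the translation group lies in $G^*$; these are the same dichotomies (split versus twisted $C_2$-extension corresponds to $a\in G^*$ versus $a\notin G^*$), and your Case B normalizer computation plays the role of the paper's appeal to \cite{CS3} plus Lemma \ref{le11}. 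Your claim that $b(N,G,G')=1$ is correct and is used implicitly in the paper as well.

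The genuine gap is in the enumeration, which is where the actual content of the ``$p$ versus $1$'' dichotomy lies: you assert that a direct computation gives $|\Aut(G,G')|=p(p-1)$ except for the two groups of order $2p^3$, but you do not carry it out, and the mechanism you offer for the exceptional factor $p$ is not the right one. For $G=\langle a,\varphi^{p-1}\rangle$ of order $2p^3$ one has $Z(G)=\langle a^p\rangle$, so ``shifting the image of the order-$p^2$ generator by a nontrivial element of the unique central $C_p$'' means $a\mapsto a^{1+2pk}$; that automorphism is already among the $p(p-1)$ standard ones of the form $a\mapsto a^j$, $\varphi^{d}\mapsto\varphi^{d}$, so it contributes nothing new. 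The genuinely new automorphisms in the two exceptional cases twist the generator by a \emph{non-central} element of order $p$ of the automorphism part, e.g.\ $a\mapsto a\varphi^{k(p-1)}$ in family (3), and the point of the computation is that the required relation $(1,\varphi^{d})(a,\varphi^{k(p-1)})(1,\varphi^{-d})=(a,\varphi^{k(p-1)})^{i^{d}}$ holds for $k\neq 0$ precisely when $i^{d}\equiv 1\pmod p$, i.e.\ $d=p-1$; the analogous divisibility condition $p\mid 1+i^{d}$ singles out $d=(p-1)/2$ in family (4). Without isolating this condition you have no proof that the count is $1$ for all the non-exceptional groups and $p$ for the two exceptional ones, so the last sentence of the theorem remains unproved in your write-up.
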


\begin{proof}
By Theorem \ref{theoB}, $L/K$ has a Hopf Galois structure of cyclic type if and only if $G=\Gal(\wL/K)$ embeds in $\Hol(C_{2p^2})$ as a transitive subgroup. If $G^* \subset \Hol(C_{2p^2})$ acts transitively on $C_{2p^2}$, then $2p^2$ divides the order of $G^*$. Since the $p$-Sylow subgroup of $\Hol(C_{2p^2})$ has order $p^3$, we have two possibilities for the $p$-Sylow subgroup $\Syl_p(G^*)$ of $G^*$, either $|\Syl_p(G^*)|=p^2$ or $\Syl_p(G^*)=\Syl_p(\Hol(C_{2p^2}))$. Let us denote by $a$ a generator of $C_{2p^2}$ and by $\varphi$ a generator of $\Aut(C_{2p^2})$. Then $\varphi(a)=a^i$, with $i$ an integer prime with $2p$ and having multiplicative order $p(p-1)$ modulo $2p^2$. We have $\Syl_p(\Hol(C_{2p^2}))=\langle a^2, \varphi^{p-1} \rangle.$ By Theorem 3 and the proof of Theorem 5 in \cite{CS3}, we have that a transitive subgroup of $\Hol(C_{2p^2})$ has an element of order $p^2$. We have seen in Section \ref{ciclic} that $\Hol(C_{2p^2})$ has precisely $p$ cyclic subgroups of order $p^2$. These may be written as $\langle (a^2,\varphi^{k(p-1)}) \rangle, 0\leq k \leq p-1$. Then either $\Syl_p(G^*)=\langle a^2, \varphi^{p-1} \rangle$ or $\Syl_p(G^*)=\langle (a^2,\varphi^{k(p-1)}) \rangle$, for some $k$. In both cases, since $[\Hol(C_{2p^2}):\Syl_p(\Hol(C_{2p^2}))]=p-1$, $\Syl_p(G^*)$ is a normal subgroup of $G^*$. We examine now the two cases.

\begin{enumerate}[I)]
\item We assume that $\Syl_p(G^*)$ has order $p^2$. If $k\neq 0$, then the normalizer of $\langle (a^2,\varphi^{k(p-1)}) \rangle$ in $\Hol(C_{2p^2})$ consists of the elements of the form $(a^j,\varphi^{l(p-1)})$, which have order a multiple of $p$ if $p\nmid l$. We have then two subcases.
\begin{enumerate}[1)]
\item If $a \in G^*$, then $G^*$ is transitive and we have $G^*=\langle a, \varphi^{pd} \rangle = \langle a \rangle \rtimes \langle \varphi^{pd} \rangle \simeq C_{2p^2} \rtimes C_{(p-1)/d}$, for some divisor $d$ of $p-1$.
\item If $a \not \in G^*$ and $|G^*|>2p^2$, then $a^2 \in G^*$. In order to be transitive, $G^*$ must contain an element of the form $(a,\varphi^k)$, for some $k$. Such an element has order prime with $p$ if and only if $p\mid k$. We have then $G^*=\langle a^2, (a,\varphi^{pd}) \rangle = \langle a^2 \rangle \rtimes \langle (a,\varphi^{pd}) \rangle \simeq C_{p^2} \rtimes C_{(p-1)/d}$, for some divisor $d$ of $(p-1)/2$.

\end{enumerate}

\item We assume that $\Syl_p(G^*)$ has order $p^3$. We have then $\Syl_p(G^*)=\langle a^2, \varphi^{p-1} \rangle.$ We have two subcases.
\begin{enumerate}
\item[3)] If $a \in G^*$, then $G^*$ is transitive and we have $G^*=\langle a, \varphi^{p-1},\varphi^{pd} \rangle=\langle a,\varphi^{d} \rangle \simeq C_{2p^2} \rtimes C_{p(p-1)/d}$, for $d$ a divisor of $p-1$.
\item[4)] If $a \not \in G^*$, in order to be transitive, $G^*$ must contain an element of the form $(a,\varphi^{pd})$ of even order. We have then $G^*=\langle a^2, \varphi^{p-1},(a,\varphi^{pd}) \rangle$, for some divisor $d$ of $(p-1)/2$. Now $G^*=\langle a^2, \varphi^{p-1},(a^p,\varphi^{pd}) \rangle$ and the two elements $\varphi^{p-1}$ and $(a^p,\varphi^{pd})$ commute with each other, hence generate a cyclic subgroup of order $p(p-1)/d$. We have then $G^* =\langle a^2,(a^p,\varphi^d) \rangle \simeq C_{p^2} \rtimes C_{p(p-1)/d}$.

\end{enumerate}
\end{enumerate}

We apply now Corollary \ref{cor} in order to determine the number of Hopf Galois structures. In the sequel, we identify $G$ with $G^*$ and $G'$ with $\Stab(G^*,1)$. We consider the different cases.

\begin{enumerate}[1)]
\item If $G=\langle a, \varphi^{pd} \rangle$, then $G'=\langle \varphi^{pd} \rangle$ and, since $\varphi^{pd}(a)=a^{i^{pd}}$, an automorphism of $G$ sending $G'$ to itself must send the element $\varphi^{pd}$ to itself, hence $|\Aut(G,G')|=|\Aut(C_{2p^2})|$ and there is one structure.

\item If $G=\langle a^2, (a,\varphi^{pd}) \rangle$, then $G'=\langle \varphi^{2pd} \rangle$ and again an automorphism of $G$ sending $G'$ to itself must send the element $\varphi^{2pd}$ to itself. We consider the subgroup of $\Aut(G,G')$ consisting of the automorphisms $g$ such that $g(a^2)=a^{2j}$, with $p\nmid j$ and $g(a,\varphi^{pd})=(a,\varphi^{pd})$. It has order $p(p-1)=|\Aut(C_{2p^2})|$. Now, an element $h$ in $\Aut(G,G')$ such that $h(a^2)=a^2$ must satisfy $h(a,\varphi^{pd})=(a,\varphi^{pdl})$, with $l=1$ or $l=1+(p-1)/(2d)$ in order to satisfy $h(\varphi^{2pd})=\varphi^{2pd}$. Now, since $\varphi^{pd}(a)=a^{i^{pd}}$ and $\varphi^{pdl}(a)=a^{-i^{pd}}$, for $l=1+(p-1)/(2d)$, the only possibility is $l=1$. Hence $|\Aut(G,G')|=|\Aut(C_{2p^2})|$
    and there is one structure.

\item If $G=\langle a,\varphi^{d} \rangle$, then $G'=\langle \varphi^{d} \rangle$ and, as in the other cases, an automorphism of $G$ sending $G'$ to itself must send the element $\varphi^{d}$ to itself. We consider the subgroup of $\Aut(G,G')$ consisting of the automorphisms $g$ such that $g(a)=a^{j}$, with $p\nmid j$ and $g(\varphi^{d})=\varphi^{d}$. It has order $p(p-1)=|\Aut(C_{2p^2})|$. A coset of this subgroup in $\Aut(G,G')$ has a representative $h$ such that $h(a)=(a,\varphi^{k(p-1)})$, for some integer $k$, $0\leq k \leq p-1$. Now we have $(1,\varphi^d)(a,\varphi^{k(p-1)})(1,\varphi^{-d})=(a^{i^d},\varphi^{k(p-1)})$. This last element is equal to $(a,\varphi^{k(p-1)})^{i^d}$, for $k\neq 0$, if and only if $i^d \equiv 1 \pmod{p}$ if and only if $d=p-1$. We have then that for $|G|=2p^3$, $|\Aut(G,G')|=p|\Aut(C_{2p^2})|$ and, in other cases $|\Aut(G,G')|=|\Aut(C_{2p^2})|$. The number of structures is then $p$ when $|G|=2p^3$ and 1 otherwise.

\item If $G=\langle a^2,(a^p,\varphi^d) \rangle$, taking into account the action of $(a^p,\varphi^d)$ on $a^2$, we obtain that an automorphism $g$ of $G$ satisfies $g(a^2)=a^{2j}$, for some $j$ with $p\nmid j$, and $g(a^p,\varphi^d)=(a^{k},\varphi^d)$, for some odd $k$. Now
    $G'=\langle \varphi^{2d} \rangle$ and, as in the other cases, an automorphism of $G$ sending $G'$ to itself must send the element $\varphi^{2d}$ to itself.
    Since $\varphi^{2d}=a^{-p(1+i^d)}(a^p,\varphi^d)^2$, the condition $g(\varphi^{2d})=\varphi^{2d}$ implies $p^2\mid (-jp+k)(1+i^d)$. We have $p\mid 1+i^d$ if and only if $d=(p-1)/2$. We obtain then that, if $d \neq (p-1)/2$, an automorphism in $\Aut(G,G')$ satisfies  $g(a^p,\varphi^d)=(a^{jp},\varphi^d)$, and then $|\Aut(G,G')|=|\Aut(C_{2p^2})|$ and the number of structures is 1. If $d=(p-1)/2$, an automorphism in $\Aut(G,G')$ satisfies  $g(a^p,\varphi^d)=(a^{k},\varphi^d)$, with $p\mid k$, and then $|\Aut(G,G')|=p|\Aut(C_{2p^2})|$ and the number of structures is $p$.
\end{enumerate}
\end{proof}

\section{Skew braces of order $2p^2$}\label{braces}

In this section, we will classify skew braces of order $2p^2$ by applying Proposition \ref{GV} to the regular subgroups of $\Hol(N)$ obtained in Section \ref{Galois}. For each skew brace $B$ corresponding to a pair $(N,G)$, where $N$ is a group of order $2p^2$ and $G$ a regular subgroup of $\Hol(N)$, modulo conjugation by $\Aut(N)$, we shall determine the socle $\Soc(B)$, the annihilator $\Ann(B)$ and the group of automorphisms $\Aut(B)$.

\subsection{Cyclic additive group}

Let $N$ be the cyclic group of order $2p^2$. We consider the regular subgroups of $\Hol(N)$ obtained in Section \ref{ciclic}. Let $\varphi$ be a fixed element of order $p$ in $\Aut(N)$ and $x$ be a generator of $N$. Then the subgroup $\langle (x,\varphi) \rangle$ of $\Hol(N)$ contains exactly $p$ elements of the form $(*,\varphi)$, namely $(x,\varphi)^{kp+1}, 0\leq k \leq p-1$. On the other hand, all elements of the form $(*,\varphi)$ are conjugated by $\Aut(N)$. We obtain then that all subgroups $\langle (x,\varphi) \rangle$, with $x$ a generator of $N$ are conjugated by $\Aut(N)$. We obtain then two braces with multiplicative group isomorphic to $C_{2p^2}$, corresponding to $G=\langle(x,\Id)\rangle=N$ and $G=\langle(x,\varphi)\rangle$, with $\varphi$ of order $p$. In the first case $\Soc(B)=\Ann(B)=B$ and $\Aut(B)=\Aut(N)$. In the second case $\Soc(B)=\Ann(B)$ is the subgroup of order $2p$ of $N$ and $\Aut(B)=\Aut(N)$.

The unique regular subgroup of $\Hol(N)$ isomorphic to $D_{2p^2}$ provides a skew brace $B$ with $\Soc(B)$ equal to the subgroup of order $p^2$ of $N$, $\Ann(B)=\{1\}$ and $\Aut(B)=\Aut(N)$.

We summarize the obtained results in the following table.

\begin{center}
\begin{tabular}{|c||c|c|c|c|}
\hline
\multicolumn{5}{|c|}{Braces of order $2p^2$ with cyclic additive group}\\
\hline \hline
number of braces & $(B,\circ)$  &  $|\Soc(B)|$ & $|\Ann(B)|$ & $|\Aut(B)|$ \\
\hline
\hline
1&$C_{2p^2}$ & $2p^2$  & $2p^2$ & $p(p-1)$  \\
\hline
1&$C_{2p^2}$ &   $2p$  & $2p$ & $p(p-1)$ \\
\hline \hline
1&$D_{2p^2}$ & $p^2$ & $1$ & $p(p-1)$  \\
\hline
\end{tabular}
\end{center}

\subsection{Dihedral additive group}

Let $N$ be the dihedral group of order $2p^2$. We consider the regular subgroups of $\Hol(N)$ obtained in Section \ref{dihedral} and use the notations introduced there. We have two regular subgroups of $\Hol(N)$ isomorphic to $D_{2p^2}$, namely $N$ and its centralizer $N^*$ in $\Hol(N)$. Since they are normal in $\Hol(N)$ they give two different skew braces with dihedral multiplicative group. In both cases we have $\Soc(B)=\Ann(B)=\{1\}$ and $\Aut(B)=\Aut(N)$.

We consider now the cyclic regular subgroups of $\Hol(N)$. For the $p^3$ subgroups $G_{k,j}:=\langle (r^ks,\Id),(r,\varphi)\rangle$, where $\varphi=\varphi_1^i \varphi_2^j$, with $i=-2-kjp$, we obtain $\varphi_1 G_{k,j} \varphi_1^{-1}=G_{k+1,j}$. For $\psi \in \Aut(N)$, defined by $\psi(r)=r^m, \psi(s)=s$, we have $\psi^{-1} G_{0,j} \psi = G_{0,mj}$. Hence the subgroups $G_{k,j}$ are distributed in two classes under conjugation by $\Aut (N)$, corresponding to $j=0$ and $j\neq 0$. We obtain then $2$ different skew braces. We have $\Soc(B)=\Ann(B)=\{1\}$ and $\Aut(B)=\{\varphi \in \Aut(N): \varphi(s)=s \}$, when $j=0$, hence $|\Aut(B)|=p^2-p$; $\Aut(B)=\{\varphi \in \Aut(N): \varphi(s)=s, \varphi(r)=r^m, m\equiv 1 \pmod{p} \}$, when $j\neq 0$, hence $|\Aut(B)|=p$.

For the $p^2$ subgroups $G_{k}:=\langle (r,\Id),(r^ks,\chi_{2k})\rangle$, where $\chi_{2k}(r)=r^{-1}, \chi_{2k}(s)=r^{2k}s$, we obtain $\varphi_1G_k \varphi_1^{-1}=G_{k+1}$. Hence they provide one skew brace. We have $\Soc(B)=\Ann(B)=\{1\}$ and $\Aut(B)=\{\varphi \in \Aut(N): \varphi(s)=s \}$, hence $|\Aut(B)|=p^2-p$.

For the $p^3-p^2$ subgroups $G_{k,j}:=\langle (r^ks,\chi_{2k}),(r,\varphi)\rangle$, where $\varphi=\varphi_1^{-kjp} \varphi_2^j$, with $p\nmid j$, we obtain $\varphi_1 G_{k,j} \varphi_1^{-1}=G_{k+1,j}$. For $\psi \in \Aut(N)$, defined by $\psi(r)=r^m, \psi(s)=s$, we have $\psi^{-1} G_{0,j} \psi = G_{0,mj}$. Hence the subgroups $G_{k,j}$ lie in one class under conjugation by $\Aut(N)$, and we obtain $1$  skew brace. We have $\Soc(B)=\Ann(B)=\{1\}$ and $\Aut(B)=\{\varphi \in \Aut(N): \varphi(s)=s, \varphi(r)=r^m, m\equiv 1 \pmod{p} \}$, hence $|\Aut(B)|=p$.

We summarize the obtained results in the following table.

\begin{center}
\begin{tabular}{|c||c|c|c|c|}
\hline
\multicolumn{5}{|c|}{Skew braces of order $2p^2$ with dihedral additive group}\\
\hline \hline
number of braces & $(B,\circ)$  &  $|\Soc(B)|$ & $|\Ann(B)|$ & $|\Aut(B)|$ \\
\hline
\hline
2 & $C_{2p^2}$ & $1$  & $1$ & $p(p-1)$  \\
\hline
2 & $C_{2p^2}$ &   $1$  & $1$ & $p$ \\
\hline \hline
2 & $D_{2p^2}$ & $1$ & $1$ & $p^3(p-1)$  \\
\hline
\end{tabular}
\end{center}

\subsection{Additive group isomorphic to $C_p\times C_{2p}$}

Let $N$ be the group $C_p\times C_{2p}$. We consider the regular subgroups of $\Hol(N)$ obtained in Section \ref{abelian} and use the notations introduced there. We observe first that by conjugation by $\Aut(N)$, we obtain the $p+1$ conjugates of $\Syl_p$ in $\Hol(N)$. Since $N$ is normal in $\Hol(N)$, it gives one skew brace with $\Soc(B)=\Ann(B)=B$ and $\Aut(B)=\Aut(N)$. For the subgroups $G_i:=\langle (a^i,\varphi),b,c \rangle$, with $0<i\leq p-1$, we obtain $\psi G_1 \psi^{-1}=G_i$, for $\psi$ defined by $\psi(a)=a^i, \psi(b)=b^i$, hence they give one skew brace with $\Soc(B)=\Ann(B)=\langle b,c \rangle \simeq C_{2p}$ and $\Aut(B)=\{ \psi \in \Aut (N) : \psi(a)=a^i b^j, \psi(b)=b^{i^2}, p\nmid i \}$, hence $|\Aut(B)|=p(p-1)$.

We consider now the regular subgroups of $\Hol(N)$ isomorphic to $C_p\times D_{2p}$. The conjugates of $\langle a,b,(c,\chi) \rangle$ are conjugated by $\Aut(N)$, hence they give one skew brace $B$ with $\Soc(B)=\langle a,b \rangle \simeq C_p\times C_p, \Ann(B)=\langle b \rangle \simeq C_p, \Aut(B)=\{ \psi \in \Aut(N): \psi(a)=a^i, \psi(b)=b^l, p\nmid i, p\nmid l \}$, hence $|\Aut(B)|=(p-1)^2$.

For the second family of regular subgroups of $\Hol(N)$ isomorphic to $C_p\times D_{2p}$, we may consider the automorphisms $\psi_1$ defined by $\psi_1(a)=ab, \psi_1(b)=b$ and $\psi_2$ defined by $\psi_2(a)=a^m, \psi_2(b)=m$, for $m$ of order $p-1$ modulo $p$, to obtain that they all belong to the same conjugation class under $\Aut(N)$. Hence they give one skew brace $B$ with $\Soc(B)=\Ann(B)=\langle b \rangle$ and $\Aut(B)=\{ \psi \in \Aut(N) : \psi(a)=a^i, \psi(b)=b^{i^2} \}, 0<i\leq p-1$, and then $|\Aut(B)|=p-1$.

Finally, there is one regular subgroup of $\Hol(N)$ isomorphic to $(C_p\times C_p) \rtimes C_2$, $\langle a,b,(c,\chi) \rangle$ and the corresponding skew brace $B$ satisfies $\Soc(B)=\langle a,b \rangle \simeq C_p\times C_p, \Ann(B)=\{1\}$ and $\Aut(B)=\Aut(N)$.

We summarize the obtained results in the following table.

\begin{center}
\begin{tabular}{|c||c|c|c|c|}
\hline
\multicolumn{5}{|c|}{Braces with additive group isomorphic to $C_p\times C_{2p}$ }\\
\hline \hline
number of braces & $(B,\circ)$  &  $|\Soc(B)|$ & $|\Ann(B)|$ & $|\Aut(B)|$ \\
\hline
\hline
1 & $C_p\times C_{2p}$ & $2p^2$  & $2p^2$ & $p(p+1)(p-1)^2$  \\
\hline
1 & $C_p\times C_{2p}$ & $2p$  & $2p$ & $p(p-1)$  \\
\hline \hline
1 & $C_p\times D_{2p}$ & $p^2$  & $p$ & $(p-1)^2$  \\
\hline
1 & $C_p\times D_{2p}$ & $p$  & $p$ & $p-1$  \\
\hline \hline
1 & $(C_p\times C_p)\rtimes C_2$ & $p^2$  & $1$ & $p(p+1)(p-1)^2$  \\
\hline
\end{tabular}
\end{center}

\subsection{Additive group isomorphic to $C_p\times D_{2p}$}

Let $N$ be the group $C_p\times D_{2p}=\langle r,s,c \rangle$. We consider the regular subgroups of $\Hol(N)$ obtained in Section \ref{cidi} and use the notations introduced there. 

By conjugating by $\varphi$, defined by $\varphi(r)=r, \varphi(s)=rs, \varphi(c)=c$, we obtain that all regular subgroups $A_{p-2,0} \times \langle r^i s \rangle$ are conjugate by $\Aut(N)$ and give one skew brace $B$ with $\Soc(B)=\Ann(B)=\langle c \rangle$ and $\Aut(B)=\{ \psi \in \Aut(N): \psi(r)=r^i, \psi(s)=s, \psi(c)=c^k \}$, hence $|\Aut(B)|=(p-1)^2$. For the second family of regular subgroups of $\Hol(N)$ isomorphic to $C_p\times C_{2p}$, namely $\langle r,c,(s,\chi_{1,i}) \rangle$ we obtain as well that they are all conjugate under $\Aut(N)$ and give one skew brace with $\Soc(B)=\Ann(B)=\langle c \rangle$ and $\Aut(B)=\{ \psi \in \Aut(N): \psi(r)=r^i, \psi(s)=s, \psi(c)=c^k \}$, hence $|\Aut(B)|=(p-1)^2$.

We consider now the regular subgroups of $\Hol(N)$ isomorphic to $C_p\times D_{2p}$. For the family $\langle r,s,(c,\varphi^l)\rangle$, we have that $\langle r,s,c \rangle$ is normal in $\Hol(N)$, whereas the subgroups with $l\neq 0$ are all conjugate under $\Aut(N)$. We obtain then two skew braces. For the first one, we have $\Soc(B)=\Ann(B)=\langle c \rangle$ and $\Aut(B)=\Aut(N)$. For the second one, we have $\Soc(B)=\{1\}$, $\Ann(B)=\{1\}$ and $\Aut(B)=\{ \psi \in \Aut(N): \psi(c)=c^k, \psi(r)=r^k, \psi(s)=r^j s \}, 0<k\leq p-1, 0\leq j \leq p-1$, hence $|\Aut(B)|=p(p-1)$. We consider now the family $\langle (r,\varphi^{-2}), (c,\varphi^l), (s,\chi_{1,0})\rangle$. By conjugating by the automorphism $\psi_1$ defined by $\psi_1(c)=c, \psi_1(r)=r^m, \psi_1(s)=s$, with $m$ of order $p-1$ modulo $p$, we obtain that the subgroups with $l\neq 0$ are conjugated with each other. We obtain then two skew braces. For the one corresponding to $l=0$, we have $\Soc(B)=\Ann(B)=\langle c \rangle, \Aut(B)=\Aut(N)$. For the one corresponding to $l\neq 0$, we have $\Soc(B)=\Ann(B)=\{ 1 \}, \Aut(B)=\{ \psi \in \Aut(N): \psi(c)=c^k, \psi(r)=r^k, \psi(s)=r^js, 0<k\leq p-1, 0\leq j \leq p-1 \}$, hence $|\Aut(B)|=p(p-1)$. We consider now the family $\langle r,(c,\varphi^l),(s,\chi_{2,i})\rangle, 0\leq i,l \leq p-1$. By conjugating by $\varphi$, we obtain that all subgroups with $l=0$ are in the same conjugation class. For $l\neq 0$, by conjugating by $\varphi$ and by $\psi_1$ as above, we obtain a second conjugation class. We obtain then $2$ skew braces. If $B$ is the skew brace corresponding to $l=0$, we have $\Soc(B)=\langle c \rangle, \Ann(B)=\{ 1 \}, \Aut(B)=\Aut(N)$. For $l\neq 0$, we have $\Soc(B)=\Ann(B)=\{ 1 \}, \Aut(B)=\{ \psi \in \Aut(N): \psi(c)=c^k, \psi(r)=r, \psi(s)=r^js \}$, hence $|\Aut(B)|=p(p-1)$. We consider now the family $\langle (cr^{l/2},\Id),(r,\varphi^{-2}),(r^is,\chi_3) \rangle$. By conjugating by $\psi_1$, as above, and by $\psi_2$, defined by $\psi_2(r)=r, \psi_2(c)=c, \psi_2(s)=rs$, we obtain that there are two conjugation classes under $\Aut(N)$. For the skew brace $B$ corresponding to $l=0$, we have $\Soc(B)=\langle c \rangle, \Ann(B)=\{ 1 \}, \Aut(B)=\{ \psi \in \Aut(N): \psi(c)=c^k, \psi(r)=r^m, \psi(s)=s, 1\leq k,m \leq p-1 \}$, hence $|\Aut(B)|=(p-1)^2$. For the skew brace $B$ corresponding to $l\neq 0$, we have $\Soc(B)=\langle c \rangle, \Ann(B)=\{ 1 \}, \Aut(B)=\{ \psi \in \Aut(N): \psi(c)=c^k, \psi(r)=r^k, \psi(s)=r^ms, 1\leq k\leq p-1, 0\leq m \leq p-1  \}$, hence $|\Aut(B)|=p(p-1)$.

We consider now the regular subgroups of $\Hol(N)$ isomorphic to $(C_p\times C_p) \rtimes C_2$. For $G=\langle (r,\varphi^{-2}),c,(s,\chi_{2,0} \rangle$, we obtain one skew brace $B$ with $\Soc(B)=\langle c \rangle, \Ann(B)=\{ 1 \}, \Aut(B)=\Aut(N)$. For $G=\langle r,c, (s,\chi_3) \rangle$, we obtain one skew brace $B$ with $\Soc(B)=\langle c \rangle, \Ann(B)=\{ 1 \}, \Aut(B)=\Aut(N)$.

We summarize the obtained results in the following table.

\begin{center}
\begin{tabular}{|c||c|c|c|c|}
\hline
\hline
\multicolumn{5}{|c|}{Skew braces with additive group isomorphic to $C_p\times D_{2p}$ }\\
\hline \hline
number of braces & $(B,\circ)$  &  $|\Soc(B)|$ & $|\Ann(B)|$ & $|\Aut(B)|$ \\
\hline
\hline
2 & $C_p\times C_{2p}$ & $p$  & $p$ & $(p-1)^2$  \\
\hline \hline
2 & $C_p\times D_{2p}$ & $p$  & $p$ & $p(p-1)^2$  \\
\hline
3 & $C_p\times D_{2p}$ & $1$  & $1$ & $p(p-1)$  \\
\hline
1 & $C_p\times D_{2p}$ & $p$  & $1$ & $p(p-1)^2$  \\
\hline
1 & $C_p\times D_{2p}$ & $p$  & $1$ & $(p-1)^2$  \\
\hline
1 & $C_p\times D_{2p}$ & $p$  & $1$ & $p(p-1)$  \\
\hline \hline
2 & $(C_p\times C_p)\rtimes C_2$ & $p$  & $1$ & $p(p-1)^2$  \\
\hline
\end{tabular}
\end{center}

\subsection{Additive group isomorphic to $(C_p\times C_{p})\rtimes C_2$}
Let $N$ be the group $(C_p\times C_{p})\rtimes C_2$. For any skew brace $B$ with additive group isomorphic to $N$, we have $\Soc(B)=\Ann(B)=\{ 1 \}$, by Proposition \ref{soc}, since $Z(N)$ is trivial.  We consider the regular subgroups of $\Hol(N)$ obtained in Section \ref{semi} and use the notations introduced there.

\subsubsection{Multiplicative group isomorphic to $C_p\times C_{2p}$}

We consider the regular subgroups of $\Hol(N)$ isomorphic to $C_p\times C_{2p}$. We examine first those having an element of order 2 of the form $(z,\Id)$. By conjugating with $\psi_1$ defined by $\psi_1(a)=a, \psi_1(b)=b,\psi_1(c)=a^mb^nc$, we obtain that all subgroups $\langle (a,\varphi_2^{-2}),(b,\varphi_3^{-2}),(a^mb^nc,\Id) \rangle$ are conjugate. For the corresponding skew brace $B$, we obtain $\Aut(B)=\{ \psi \in \Aut(N): \psi(a)=a^ib^j, \psi(b)=a^kb^l \}$, hence $|\Aut(B)|=(p^2-1)(p^2-p)$. We consider the family $\langle (a,\varphi_2^{-2}),(b,\varphi_1^{j_1}\varphi_2^{(2-n)j_1}\varphi_3^{-2}),(a^mb^nc,\Id)\rangle$. By conjugating with $\psi_1$ and $\psi_2$ defined by $\psi_2(a)=a, \psi_2(b)=b^l,\psi_2(c)=c$, with $l$ of order $p-1$ modulo $p$, we obtain that all subgroups in this family are conjugate. For the corresponding skew brace $B$, we have $\Aut(B)=\{ \psi \in \Aut(N): \psi(a)=a^{l^2}, \psi(b)=a^kb^l,\psi(c)=c \}$, hence $|\Aut(B)|=p(p-1)$.

We examine next the regular subgroups with an element of order 2 of the form $(z,\chi_1)$, with $\chi_1$ defined in (\ref{chi1}). We consider the family $\langle (a,\Id),(b,\varphi_2^{k}\varphi_3^{-2}),(a^mb^nc,\chi_1)\rangle$, with $\chi_1$ defined by $\chi_1(a)=a^{-1}, \chi_1(b)=a^{-k}b, \chi_1(c)=a^{2m+kn}c$. By conjugating with $\psi_1$ and $\varphi_1$, we obtain that all subgroups in this family are conjugate. For the corresponding skew brace $B$, we have $\Aut(B)=\{ \psi \in \Aut(N): \psi(a)=a^{i}, \psi(b)=b^l,\psi(c)=c \}$, hence $|\Aut(B)|=(p-1)^2$. Now the family $\langle (a,\varphi_2^{-2}),(b,\varphi_2^{k}),(a^mb^nc,\chi_1)\rangle$, with $\chi_1$ defined by $\chi_1(a)=a, \chi_1(b)=a^{-k}b^{-1}, \chi_1(c)=a^{kn}b^{2n}c$, is the conjugate of the preceding one by $\psi$ defined by $\psi(a)=a^kb^2, \psi(b)=a^{1+(k^2/2)}b^k, \psi(c)=c$. We consider the family $\langle (a,\varphi_2^i\varphi_3^j),(b,\varphi_2^k\varphi_3^l),(a^mb^nc,\chi_1) \rangle$. By conjugating with $\psi_1, \varphi_1$ and the automorphism $\psi$ defined by $\psi(a)=a, \psi(b)=b^{\alpha}, \psi(c)=c$, we obtain that all subgroups in this family are conjugate. Moreover, with $\psi$ defined by $\psi(a)=a^2b,\psi(b)=b,\psi(c)=c$, we have $\psi \langle (a,\Id),(b,\varphi_3^{-2}),(c\chi_1)\rangle \psi^{-1}=\langle (a,\varphi_3),(b,\varphi_3^{-2}),(c\chi_1')\rangle$, where $\chi_1(a)=a^{-1}, \chi_1(b)=b,\chi_1(c)=c$ and $\chi_1'(a)=a^{-1}b^{-1}, \chi_1'(b)=b,\chi_1'(c)=c$. Hence the groups in this family are conjugated to the subgroups in the preceding ones.

We examine next the regular subgroups with an element of order 2 of the form $(z,\chi_2)$, with $\chi_2$ defined in (\ref{chi2}). We consider the family $\langle (a,\Id),(b,\Id),(a^{u/2}b^{v/2}c,\chi_2)\rangle$. By conjugating with $\psi_3$ defined by $\psi_3(a)=a,\psi_3(b)=b,\psi_3(c)=a^{u/2}b^{v/2}c$, we obtain that this family is contained in one conjugation class. For the corresponding skew brace $B$, we obtain $\Aut(B)=\{ \psi \in \Aut(N): \psi(a)=a^i b^j, \psi(b)=a^kb^l, \psi(c)=c \}$, hence $\Aut(B)=(p^2-1)(p^2-p)$. Finally for the family $\langle (a,\Id),(b,\varphi_1^{j_1} \varphi_2^{-vj_1/2}),(a^{u/2}b^{v/2}c,\chi_2)\rangle$, by conjugating with $\psi_3$ and by $\psi$ defined by $\psi(a)=a^{\alpha},\psi(b)=b,\psi(c)=c$, with $\alpha$ of order $p-1$ modulo $p$, we obtain that this family is contained in one conjugation class. For the corresponding skew brace $B$, we have $\Aut(B)=\{ \psi \in \Aut(N): \psi(a)=a^{l^2}, \psi(b)=a^kb^l, \psi(c)=c \}$, hence $\Aut(B)=p(p-1)$.

\subsubsection{Multiplicative group isomorphic to $C_p\times D_{2p}$}

We consider the regular subgroups of $\Hol(N)$ isomorphic to $C_p\times D_{2p}$. We examine first those having an element of order 2 of the form $(z,\Id)$. By conjugating with $\varphi_1, \varphi_2, \varphi_3$ and $\psi$ defined by $\psi(a)=ab, \psi(b)=b, \psi(c)=c$, we obtain that all subgroups in the family $\langle (a,\varphi_2^{i}\varphi_3^j),(b,\varphi_2^k\varphi_3^l),(a^mb^nc,\Id) \rangle$, with $i+l=-2, jk=il$ are in the same conjugation class. For the corresponding skew brace, with $j=k=m=n=0, i=-2$, we obtain $\Aut(B)=\{ \psi \in \Aut(N): \psi(a)=a^i, \psi(b)=b^l, \psi(c)=b^n c\}$, hence $|\Aut(B)=p(p-1)^2$. We consider now the family $\langle (a,\Id), (b,\varphi_1^{j_1}\varphi_2^{j_2}\varphi_3^{-2}),(b^nc,\Id)\rangle$. By conjugating with $\varphi_3, \varphi_1$ and $\psi$ defined by $\psi(a)=a^{\alpha}, \psi(b)=b, \psi(c)=c$, with $\alpha$ of order $p-1$ modulo $p$, we obtain that all subgroups in this family are conjugated with each other. For the corresponding skew brace $B$, with $j_1=1, j_2=n=0$, we obtain $\Aut(B)=\{ \psi \in \Aut(N): \psi(a)=a^{l^2}, \psi(b)=b^l, \psi(c)=a^mc \}$, hence $|\Aut(B)|=p(p-1)$.

We examine next the regular subgroups with an element of order 2 of the form $(z,\chi_1)$, with $\chi_1$ defined in (\ref{chi1}). We consider the family $\langle (a,\Id),(b,\varphi_2^k),(b^nc,\chi_1) \rangle$, with $\chi_1$ defined by $\chi_1(a)=a, \chi_1(b)=a^{-k} b^{-1}, \chi_1(c)=a^{kn}b^{2n}c$. Conjugating with $\varphi_3$ and $\psi$ as above, we obtain that the subgroups in this family are grouped in two conjugation classes depending on whether $k=0$ or $k\neq 0$. For the skew brace $B$ corresponding to $k=n=0$, we obtain $\Aut(B)= \{ \psi \in \Aut(N): \psi(a)=a^i, \psi(b)=b^l, \psi(c)=a^m c \}$, hence $|\Aut(B)|=p(p-1)^2$. For the skew brace $B$ corresponding to $k=1, n=0$, we obtain $\Aut(B)= \{ \psi \in \Aut(N): \psi(a)=a^i, \psi(b)=b^i, \psi(c)=a^m c \}$, hence $|\Aut(B)|=p(p-1)$. We consider now the family $\langle (a,\varphi_2^{-2}),(b,\varphi_2^k\varphi_3^{-2}),(a^mb^nc,\chi_1) \rangle$, with $\chi_1$ defined by $\chi_1(a)=a^{-1}, \chi_1(b)=a^{-k} b, \chi_1(c)=a^{2m+kn}c$. Conjugating with $\varphi_2, \varphi_3$ and $\psi$ as above, we obtain that the subgroups in this family are grouped in two conjugation classes depending on whether $k=0$ or $k\neq 0$. For the skew brace $B$ corresponding to $k=m=n=0$, we obtain $\Aut(B)= \{ \psi \in \Aut(N): \psi(a)=a^i, \psi(b)=b^l, \psi(c)=a^m c \}$, hence $|\Aut(B)|=p(p-1)^2$. For the skew brace $B$ corresponding to $k=1, m=n=0$, we obtain $\Aut(B)= \{ \psi \in \Aut(N): \psi(a)=a^i, \psi(b)=b^l, \psi(c)=a^m c \}$, hence $|\Aut(B)|=p(p-1)$. We consider the family $\langle (a,\varphi_2^i \varphi_3^j),(b,\varphi_2^k\varphi_3^i),(a^mb^nc,\chi_1) \rangle$, with $j\neq 0, jk=i(i+2)$ and $\chi_1$ defined by $\chi_1(a)=a^{i+1} b^j, \chi_1(b)=a^{-k} b^{-i-1}, \chi_1(c)=a^u b^v c$, with $ju=iv, jm=(i+2)n-v$. By conjugation with  $\varphi_1, \varphi_2, \varphi_3$, we obtain that the subgroups with the same value of $i$ are conjugated. For the corresponding skew brace $B$ with $j=1, m=n=0$, we obtain $\Aut(B)=\{ \psi \in \Aut(N): \psi(a)=a^{\alpha}, \psi(b)=b^{\alpha}, \psi(c)=a^{\gamma} c \}$, hence $|\Aut(B)|=p(p-1)$. We may check that skew braces corresponding to different values of $i$ are non-isomorphic. We consider the family $\langle (a,\varphi_2^{-2}),(b,\varphi_1^{j_1}\varphi_2^{j_2}\varphi_3^{-2}),(a^mb^nc,\chi_1)\rangle$, with $j_1=1$ or $n=2$ and with $\chi_1$ defined by $\chi_1(a)=a, \chi_1(b)=a^t b^{-1}, \chi_1(c)=a^{-tn} b^{2n} c$. By conjugating with $\varphi_1, \varphi_2, \varphi_3$ and $\psi$ defined by $\psi(a)=a^{-1}, \psi(b)=b, \psi(c)=c$, we obtain that all subgroups in this family lie in one conjugation class. We compute the automorphism group for $j_1=1, j_2=m=n=t=0$ and obtain $\Aut(B)=\{ \psi \in \Aut(N): \psi(a)=a^{l^2}, \psi(b)=b^l, \psi(c)=c \}$, hence $|\Aut(B)|=p-1$. We consider the family $\langle (a,\Id),(b,\varphi_1^{j_1}\varphi_2^{j_2}),(c,\chi_1)\rangle$, with $j_1 \neq 0$, $\chi_1$ defined by $\chi_1(a)=a^{-1}, \chi_1(b)=a^t b, \chi_1(c)=a^u c$. By conjugating with $\varphi_1, \varphi_2, \varphi_3$ and $\psi$ defined by $\psi(a)=a^{\alpha}, \psi(b)=b, \psi(c)=c$, we obtain that all subgroups in this family lie in one conjugation class. We consider the corresponding skew brace $B$ for $j_1=1, j_2=t=u=0$. We obtain $\Aut(B)=\{ \psi \in \Aut(N): \psi(a)=a^i, \psi(b)=b^l, \psi(c)=b^nc \}$, hence $|\Aut(B)|=p(p-1)^2$.

We examine next the regular subgroups with an element of order 2 of the form $(z,\chi_2)$, with $\chi_2$ defined in (\ref{chi2}). By conjugating with $\varphi_3$, we obtain that the subgroups in the family $\langle (a,\varphi_2^{-2}),(b,\Id),(b^{v/2}c,\chi_2)\rangle$, with $\chi_2$ defined by $\chi_2(a)=a^{-1}, \chi_2(b)=b^{-1}, \chi_2(c)=b^v c$ are all conjugated with each other. For $v=0$, we obtain $\Aut(B)=\{ \psi \in \Aut(N): \psi(a)=a^i, \psi(b)=b^l, \psi(c)=a^m c\}$, hence $|\Aut(B)|=p(p-1)^2$. Finally we consider the family $\langle (a,\varphi_2^{-2}),(b,\varphi_1^{j_1}\varphi_2^{j_2}),(b^{v/2}c,\chi_2)\rangle$, with $j_1 \neq 0$, $\chi_2$ as in the preceding case. By conjugating with $\varphi_1, \varphi_3$ and $\psi$ defined by $\psi(a)=a^{\alpha}, \psi(b)=b, \psi(c)=c$, we obtain that all subgroups in this family lie in one conjugation class. For $v=j_2=0, j_1=1$, we obtain $\Aut(B)=\{ \psi \in \Aut(N): \psi(a)=a^i, \psi(b)=b^l, \psi(c)=a^m c\}$, hence $|\Aut(B)|=p(p-1)^2$.

\subsubsection{Multiplicative group isomorphic to $(C_p\times C_{p})\rtimes C_2$}

We consider the regular subgroups of $\Hol(N)$ isomorphic to $N$. The subgroup $N$ provides a skew brace $B$ with multiplicative group equal to the additive group, hence $\Aut(B)=\Aut(N)$.

We examine next the regular subgroups with an element of order 2 of the form $(z,\chi_1)$, with $\chi_1$ defined in (\ref{chi1}). We consider the family $\langle (a,\varphi_2^i\varphi_3^j),(b,\varphi_2^k\varphi_3^{-i-2}),(c,\chi_1)\rangle$, with $jk=-i(i+2)$ and $\chi_1$ defined by $\chi_1(a)=a^{i+1}b^j, \chi_1(b)=a^kb^{-i-1}, \chi_1(c)=c$. By conjugation with $\varphi_1, \psi$ defined by $\psi(a)=ab, \psi(b)=b, \psi(c)=c$ and $\psi_{\beta}$ defined by $\psi(a)=a, \psi(b)=b^{\beta}, \psi(c)=c$, we obtain that all subgroups in this family lie in one conjugation class. We consider the corresponding skew  brace $B$ for $i=j=k=0$ and obtain $\Aut(B)= \{ \psi \in \Aut(N): \psi(a)=a^{\alpha} b^{\beta}, \psi(b)=b^{\delta}, \psi(c)=a^{\alpha} c \}$, hence $|\Aut(B)|=p(p-1)^2$. We consider the family $\langle (a,\Id),(b,\varphi_1^{j_1}\varphi_2^{j_2}\varphi_3^{-2}),(b^nc,\chi_1)\rangle$, with $j_1\neq 0$, $j_1n=j_1-j_2$ and $\chi_1$ defined by $\chi_1(a)=a, \chi_1(b)=b^{-1}, \chi_1(c)=b^{2n}c$. By conjugation with $\varphi_3$ and $\psi_{\alpha}$ defined by $\psi(a)=a^{\alpha}, \psi(b)=b, \psi(c)=c$, we obtain that all subgroups in this family lie in one conjugation class. We consider the corresponding skew brace $B$ for $j_1=j_2=1, n=0$ and obtain $\Aut(B)= \{ \psi \in \Aut(N):\psi(a)=a^{\delta^2} , \psi(b)=a^{-\delta \nu/2} b^{\delta}, \psi(c)=a^{\mu} b^{\nu}c \}$, hence $|\Aut(B)|=p^2(p-1)$. We consider the family $\langle (a,\varphi_2^{-2}),(b,\varphi_1^{j_1}\varphi_2^{j_2}),(c,\chi_1)\rangle$, with $j_1\neq 0$ and $\chi_1$ defined by $\chi_1(a)=a^{-1}, \chi_1(b)=a^{j_2}b, \chi_1(c)=c$. By conjugation with $\psi_{\alpha}$ and $\varphi_1$ we obtain that all subgroups in this family lie in one conjugation class. For the corresponding skew brace $B$, with $j_1$=1, $j_2=0$, we obtain $\Aut(B)=\{ \psi \in \Aut(N): \psi(a)=a, \psi(b)=a^{\gamma} b, \psi(c)=a^{\mu} b^{2\gamma} c \}$, hence $|\Aut(B)|=p^2$.

Finally for the skew brace $B$ corresponding to the subgroup $\langle (a,\varphi_2^{-2}),(b,\varphi_3^{-2}),(c,\chi_2)$, with $\chi_2$ defined by $\chi_2(a)=a^{-1}, \chi_2(b)=b^{-1}, \chi_2(c)=c$, we obtain $\Aut(B)= \Aut(N)$.

\vspace{0.5cm}
We summarize the obtained results in the following table.

\begin{center}
\begin{tabular}{|c||c|c|}
\hline
\multicolumn{3}{|c|}{Skew braces with additive group isomorphic to $(C_p\times C_{p})\rtimes C_2$ }\\
\hline \hline
number of braces & $(B,\circ)$  &  $|\Aut(B)|$ \\
\hline
\hline
2 & $C_p\times C_{2p}$ & $p(p+1)(p-1)^2$  \\
\hline
2 & $C_p\times C_{2p}$ &  $p(p-1)$  \\
\hline
1 & $C_p\times C_{2p}$ & $(p-1)^2$  \\
\hline \hline
6 & $C_p\times D_{2p}$  & $p(p-1)^2$  \\
\hline
$p+3$ & $C_p\times D_{2p}$ & $p(p-1)$  \\
\hline
1 & $C_p\times D_{2p}$ &  $p-1$  \\
\hline \hline
2 & $(C_p\times C_p)\rtimes C_2$ &  $p^3(p+1)(p-1)^2$  \\
\hline
1 & $(C_p\times C_p)\rtimes C_2$ & $p(p-1)^2$  \\
\hline
1 & $(C_p\times C_p)\rtimes C_2$ & $p^2(p-1)$  \\
\hline
1 & $(C_p\times C_p)\rtimes C_2$ & $p^2$  \\
\hline
\end{tabular}
\end{center}

\subsection{Summary}
Let $p$ be an odd prime number. As a summary of the preceding results, we give in the following table the number of isomorphism classes of skew braces $(B,\cdot,\circ)$ of order $2p^2$ with additive group $(B,\cdot)$ and multiplicative group $(B,\circ)$.

\begin{center}
\begin{tabular}{|c||c|c|c|c|c|}
\hline
\multicolumn{6}{|c|}{Number of skew braces $(B,\cdot,\circ)$ }\\
\hline \hline
$(B,\circ)$  $\diagdown$ $(B,\cdot)$ &  $C_{2p^2}$& $D_{2p^2}$ & $C_p\times C_{2p}$ & $C_p \times D_{2p}$ & $(C_p\times C_p) \rtimes C_2$ \\
\hline
\hline
$C_{2p^2}$ & $2$  & $4$ & $0$ & $0$ & $0$ \\
\hline
$D_{2p^2}$ &   $1$  & $2$ & $0$ & $0$ & $0$ \\
\hline
$C_p\times C_{2p}$ & $0$ & $0$ & $2$ & $2$ & $5$ \\
\hline
$C_p\times D_{2p}$ & $0$ & $0$ & $2$ & $8$ & $p+10$ \\
\hline
$(C_p\times C_p)\rtimes C_2$ & $0$ & $0$ & $1$ & $2$  & $5$ \\

\hline
\end{tabular}
\end{center}

\section*{Acknowledgements } I would like to thank Ilaria Del Corso for pointing to me a mistake in a previous version of this paper and the referee for his/her valuable comments.

This work was supported by grant PID2019-107297GB-I00 (MICINN).

\end{document}